\newtheorem{theorem}{Theorem}[section]
\newtheorem{lemma}[theorem]{Lemma}
\newtheorem{coro}[theorem]{Corollary}
\newtheorem{proposition}[theorem]{Proposition}
\newcounter{other}            
\newcommand{\Cn}{\mathbb{C}^n}
\newcommand{\Bn}{\mathbb{B}_ n}
\def\a{\alpha}
\newcommand{\eps}{\varepsilon}
\newcommand{\bn}{{{\mathbb B}_n}}
\newcommand{\C}{{\mathbb C}}
\newcommand{\D}{{\mathbb D}}
\newcommand{\cn}{{\mathbb C}^n}
\newcommand{\B}{{\mathbb B}}
\def\p{\varphi}
\numberwithin{equation}{section}
\begin{document}

\title[BMO and Hankel operators]{Weighted BMO and Hankel operators\\ between Bergman spaces}

\author[J. Pau]
{Jordi Pau}
\address{
Jordi Pau\\
Departament de Matem\`atica Aplicada i Analisi,
Universitat de Barcelona,
08007 Barcelona, Catalonia,
Spain}
\email{jordi.pau@ub.edu}

\author[R. Zhao]
{Ruhan Zhao}
\address{
Ruhan Zhao\\
Department of Mathematics,
State University of New York,
Brockport, NY 14420,
USA}
\email{rzhao@brockport.edu}

\author[K. Zhu]
{Kehe Zhu}
\address{
Kehe Zhu\\
Department of Mathematics and Statistics,
State University of New York,
Albany, NY 12222,
USA}
\email{kzhu@math.albany.edu}

\subjclass[2010]{Primary 47B35; Secondary 32A36}

\keywords{Bergman spaces, Hankel operators, BMO, Bergman metric.}

\thanks{This work started when the second named author visited the University of Barcelona in 2013. He thanks the support given by the IMUB during his visit.  The first author was
 supported by DGICYT grant MTM$2011$-$27932$-$C02$-$01$
(MCyT/MEC) and the grant 2014SGR289 (Generalitat de Catalunya)}


\begin{abstract}
We introduce a family of weighted BMO spaces in the Bergman metric on the unit ball of $\cn$
and use them to characterize complex functions $f$ such that the big Hankel operators
$H_f$ and $H_{\bar f}$ are both bounded or compact from a weighted Bergman space
into a weighted Lesbegue space with possibly different exponents and different weights.
As a consequence, when the symbol function $f$ is holomorphic, we characterize bounded and
compact Hankel operators $H_{\bar f}$ between weighted Bergman spaces. In particular, this
resolves two questions left open in \cite{J, Wall}.
\end{abstract}

\maketitle

\section{Introduction}

Let $\Bn$ be the open unit ball in $\Cn$ and $dv$ the usual Lebesgue volume measure on $\Bn$, normalized so that the volume of $\Bn$
is one. Given a parameter $\alpha>-1$ we write
$$dv_{\alpha}(z)=c_{\alpha}\,(1-|z|^2)^{\alpha}dv(z),$$
where $c_{\alpha}$ is a positive constant such that $v_{\alpha}(\Bn)=1$.

Denote by $H(\bn)$ the space of holomorphic functions on $\bn$.
For $0<p<\infty$ the weighted Bergman space $A^p_{\alpha}:=A^p_{\alpha}(\Bn)$
consists of functions $f\in H(\bn)$ that are
in the Lebesgue space $L^p_{\alpha}:=L^p(\Bn,dv_{\alpha})$. The corresponding norm is given by
$$\|f\|_{p,\a}=\left(\int_{\Bn}|f(z)|^p\,dv_{\a}(z)\right)^{1/p}.$$

When $p=2$, the space $A^2_{\alpha}$ is a reproducing kernel Hilbert space: for each $z\in \Bn$ there is a function $K^{\alpha}_ z\in
A^2_{\alpha}$ such that $f(z)=\langle f,K^{\alpha}_ z \rangle _{\alpha}$ whenever $f\in A^2_{\alpha}$. Here
$$\langle f,g \rangle_{\alpha}=\int_{\Bn} f \bar{g}\,dv_{\alpha}$$
is the natural inner product in $L^2_{\alpha}$. $K^{\alpha}_ z$ is called the reproducing kernel of the Bergman space $A^2_{\alpha}$.
It is explicitly given by the formula
$$K^{\alpha}_ z(w)=\frac{1}{(1-\langle w,z\rangle )^{n+1+\alpha}},\quad z,w\in \Bn.$$
We also let $k^{\alpha}_ z$ denote the normalized reproducing kernel at $z$. Thus
$$k^\alpha_z(w)=K^\alpha_z(w)/\sqrt{K^\alpha_z(z)}=\frac{(1-|z|^2)^{(n+1+\alpha)/2}}{(1-\langle w,z\rangle)^{n+1+\alpha}}.$$

The orthogonal projection $P_{\alpha}:L^2_{\alpha}\rightarrow A^2_{\alpha}$ is an integral operator given by
$$P_{\alpha} f(z)=\int_{\Bn} \frac{ f(w)\,dv_{\alpha}(w)}{(1-\langle z,w\rangle )^{n+1+\alpha}},\qquad f\in L^2(\Bn,dv_{\alpha}).$$
The (big) Hankel operator $H_ f^{\beta}$ with symbol $f$ is defined by
$$H^{\beta}_f g=(I-P_{\beta})(fg).$$
We are interested in the mapping properties of $H^\beta_f$ between different Lebesgue spaces.

Hankel operators are closely related to Toeplitz operators and have been extensively studied by many authors in recent decades.
For analytic $f$, Axler \cite{Ax} first characterized the boundedness and compactness of $H_{\bar f}$ on the unweighted Bergman space
of the unit disk. Later on, Axler's result was generalized in \cite{AFP, AJFP} to weighted Bergman spaces of the unit ball in $\mathbb{C}^n$.
For general symbol functions, Zhu \cite{Zhu-VMO} first established the connection between size estimates of Hankel operators and
the mean oscillation of the symbols in the Bergman metric. This idea was further investigated in a series of papers \cite{BCZ0}, \cite{BCZ},
and \cite{BBCZ} in the context of bounded symmetric domains, and in \cite{Li1, Li2} in the context of strongly pseudo convex domains.

The main purpose of this paper is to characterize real-valued functions $f\in L^q_{\beta}$ such that $H^\beta_f$ is bounded or compact
from $A^p_{\alpha}$ to $L^q_{\beta}$ with $1<p\le q<\infty$. This is equivalent to characterizing complex-valued functions $f\in L^q_{\beta}$
such that both $H^\beta_f$ and $H^\beta_{\bar f}$ are bounded or compact between the above spaces.
As a consequence, we will characterize holomorphic symbols $f\in A^1_{\beta}$ such that $H^\beta_{\bar f}$ is bounded or compact
from $A^p_{\alpha}$ to $L^q_{\beta}$ with $1<p\le q<\infty$. Our characterizations are based on a family of weighted BMO spaces in
the Bergman metric.

Most previous results of this type are for bounded and compact Hankel operators from $A^p_{\alpha}$ to $L^p_{\alpha}$.
When $f$ is holomorphic,  Janson \cite{J} and Wallst\'{e}n \cite{Wall} characterized bounded and compact Hankel operators
between weighted Bergman spaces (in the Hilbert space case) with different weights on the unit disk and the unit ball, respectively.
Our results generalize theirs and solve two cases left open by them.

In the following, the notation $A\lesssim B$ means
that there is a positive constant $C$ such that $A\leq CB$,
and the notation $A\asymp B$ means that both $A\lesssim B$ and $B\lesssim A$ hold.

\section{Preliminaries and auxiliary results}

In this section we collect some preliminary results that are needed for the proof of the main theorems. We begin with notation for
the rest of the paper. For any two points $z=(z_1,\dots,z_n)$ and $w=(w_1,\dots,w_n)$ in $\Cn$, we write
$$\langle z,w\rangle=z_1\bar w_1+\cdots+z_n\bar w_n,$$
and
$$|z|=\sqrt{\langle z,z\rangle}=\sqrt{|z_1|^2+\cdots+|z_n|^2}.$$
For any $a\in \Bn$ with $a\neq 0$ we denote by $\p_a(z)$ the M\"obius transformation on $\Bn$
that interchanges the points $0$ and $a$. It is known that
$$\p_a(z)=\frac{a-P_a(z)-s_aQ_a(z)}{1-\langle z,a\rangle},\qquad z\in\Bn,$$
where $s_a =1-|a|^2$ , $P_a$ is the orthogonal projection from $\C^n$ onto the one dimensional subspace $[a]$ generated by $a$,
and $Q_a$ is the orthogonal projection from $\C^n$ onto the orthogonal complement of $[a]$. When $a=0$, $\p_a(z)=-z$.
It is known that $\p_a$ satisfies the following properties:
\begin{equation}\label{eq-pa}
\p_a\circ\p_a(z)=z,\qquad 1-|\p_a(z)|^2=\frac{(1-|a|^2)(1-|z|^2)}{|1-\langle z,a\rangle|^2}.
\end{equation}

For $z,w\in\Bn$, the distance between $z$ and $w$ induced by the Bergman metric is given by
$$\beta(z,w)=\frac12 \,\log\frac{1+|\p_z(w)|}{1-|\p_z(w)|}.$$
For $z\in\Bn$ and $r>0$, the Bergman metric ball at $z$ is given by
$$D(z,r)=\big \{w\in\Bn:\,\beta(z,w)<r \big \}.$$
We refer to \cite{ZhuBn} for more information about automorphisms and the Bergman metric on $\Bn$.

A sequence $\{a_k\}$ of points in $\Bn$ is called a \emph{separated sequence} (in the Bergman metric) if there exists a positive
constant $\delta>0$ such that $\beta(a_i,a_j)>\delta$ for any $i\neq j$. The following result is Theorem 2.23 in \cite{ZhuBn}.

\begin{lemma}\label{covering}
There exists a positive integer $N$ such that for any $0<r<1$ we can find a sequence $\{a_k\}$ in $\Bn$ with the following properties:
\begin{itemize}
\item[(i)] $\Bn=\cup_{k}D(a_k,r)$.
\item[(ii)] The sets $D(a_k,r/4)$ are mutually disjoint.
\item[(iii)] Each point $z\in\Bn$ belongs to at most $N$ of the sets $D(a_k,4r)$.
\end{itemize}
\end{lemma}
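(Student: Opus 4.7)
The plan is to build $\{a_k\}$ by a standard maximality argument and then verify the three properties, with (iii) being the only part that requires real work.

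First I would use Zorn's lemma (or a greedy construction, since $\bn$ is separable) to select a \emph{maximal} sequence $\{a_k\}\subset\bn$ subject to the separation condition $\beta(a_i,a_j)\ge r/2$ for all $i\ne j$. The key properties fall out almost automatically. For (ii), if some $w$ lay in both $D(a_i,r/4)$ and $D(a_j,r/4)$, the triangle inequality for $\beta$ would force $\beta(a_i,a_j)<r/2$, contradicting our separation condition. For (i), if some $z\in\bn$ satisfied $\beta(z,a_k)\ge r/2$ for every $k$, then $z$ could be appended to the sequence, contradicting maximality; hence there exists $k$ with $\beta(z,a_k)<r/2<r$, i.e.\ $z\in D(a_k,r)$.

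The main obstacle is (iii), where I need a uniform bound on the multiplicity that does not depend on $r\in(0,1)$. The natural tool is the M\"obius-invariant measure $d\lambda(z)=(1-|z|^2)^{-n-1}dv(z)$, for which the Bergman ball volume $V(s):=\lambda(D(z,s))$ is independent of $z$ (by invariance of both $\beta$ and $\lambda$ under $\mathrm{Aut}(\bn)$). Fix $z\in\bn$ and let $I(z)=\{k:z\in D(a_k,4r)\}$. For each $k\in I(z)$ we have $a_k\in D(z,4r)$, so by the triangle inequality $D(a_k,r/4)\subset D(z,4r+r/4)\subset D(z,5r)$. Since by (ii) the balls $\{D(a_k,r/4)\}_{k\in I(z)}$ are pairwise disjoint, summing $\lambda$-volumes gives
\[
\#I(z)\cdot V(r/4)\;=\;\sum_{k\in I(z)}\lambda\bigl(D(a_k,r/4)\bigr)\;\le\;\lambda\bigl(D(z,5r)\bigr)\;=\;V(5r),
\]
so $\#I(z)\le V(5r)/V(r/4)$.

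It remains to show $N:=\sup_{0<r<1}V(5r)/V(r/4)<\infty$. Here I would invoke the explicit description $D(0,s)=\{|w|<\tanh s\}$, which yields $V(s)=\int_{|w|<\tanh s}(1-|w|^2)^{-n-1}dv(w)$; this is a continuous, strictly increasing function on $(0,\infty)$ with $V(s)\sim c_n s^{2n}$ as $s\to 0^+$. Consequently $V(5r)/V(r/4)$ is continuous on $(0,1]$ and tends to $20^{2n}$ as $r\to 0^+$, hence bounded. Taking $N$ to be this supremum completes the proof. The only genuinely delicate point is ensuring that $N$ is uniform in $r$; this is handled by the small-$r$ asymptotics of $V$, which make the ratio behave like a dimensional constant rather than blowing up.
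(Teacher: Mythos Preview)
Your proof is correct and is essentially the standard argument for this covering lemma. The paper itself does not give a proof at all: it simply cites the result as Theorem 2.23 in Zhu's book \emph{Spaces of Holomorphic Functions in the Unit Ball}. What you have written is precisely the kind of argument one finds there---a maximal $r/2$-separated set, disjointness of the $r/4$-balls by the triangle inequality, covering by maximality, and the finite-overlap bound via the invariant measure $d\lambda$ and a volume-ratio estimate. One cosmetic point: at the end you should take $N$ to be the least integer at or above $\sup_{0<r<1}V(5r)/V(r/4)$, since the statement asks for an integer, but this does not affect the substance of the argument.
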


Any sequence $\{a_k\}$ satisfying the conditions of the above lemma will be called an $r$-\emph{lattice}
in the Bergman metric. Obviously any $r$-lattice is separated. The following integral estimate is well known and can
be found in \cite[Theorem 1.12]{ZhuBn} for example.

\begin{lemma}\label{Ict}
Let $t>-1$ and $s>0$. There is a positive constant $C$ such that
$$\int_{\Bn} \frac{(1-|w|^2)^t\,dv(w)}{|1-\langle z,w\rangle |^{n+1+t+s}}\le C\,(1-|z|^2)^{-s}$$
for all $z\in \Bn$.
\end{lemma}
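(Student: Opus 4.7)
The plan is to reduce the ball integral to a one-variable radial integral via spherical decomposition, invoke the companion estimate for sphere integrals, and then finish by elementary calculus.

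First, by rotation invariance of $dv$ I may assume $z = |z|\, e_1$ with $e_1 = (1, 0, \dots, 0)$. Writing $w = \rho \xi$ with $\rho = |w| \in [0, 1)$ and $\xi \in \Sn$, and using $dv(w) = 2n\, \rho^{2n-1}\, d\rho\, d\sigma(\xi)$ where $d\sigma$ is the normalized surface measure on $\Sn$, the left-hand side becomes
$$2n \int_0^1 \rho^{2n-1}(1-\rho^2)^t \left( \int_{\Sn} \frac{d\sigma(\xi)}{|1 - |z|\rho\, \xi_1|^{n+1+t+s}} \right) d\rho.$$

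Second, I appeal to the classical sphere estimate: since $c := n+1+t+s > n$, one has
$$\int_{\Sn} \frac{d\sigma(\xi)}{|1 - \langle \tilde z, \xi\rangle|^c} \lesssim (1-|\tilde z|^2)^{n-c}$$
for all $\tilde z \in \Bn$. This is itself established by writing $|1-\langle \tilde z, \xi\rangle|^{-c} = (1-\langle \tilde z, \xi\rangle)^{-c/2}(1 - \langle \xi, \tilde z\rangle)^{-c/2}$, expanding each factor in a binomial series, integrating term by term via orthogonality of monomials on $\Sn$, and controlling the resulting ratio of Gamma functions through Stirling's formula. Applied to $\tilde z = \rho|z|\, e_1$, this bounds our integral by a constant times
$$\int_0^1 \rho^{2n-1}(1-\rho^2)^t (1-\rho^2|z|^2)^{-(1+t+s)}\, d\rho.$$

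Third, I substitute $u = \rho^2$ and treat two regimes. For $|z|^2 \leq 1/2$ the factor $(1-u|z|^2)^{-(1+t+s)}$ is uniformly bounded and the remaining Beta-type integral is finite thanks to $t > -1$. For $|z|^2 > 1/2$ I use $1 - u|z|^2 \asymp (1-|z|^2) + (1-u)$ and split at $u = |z|^2$: on $[0, |z|^2]$ the integrand is comparable to $(1-u)^{-1-s}$, producing $\lesssim (1-|z|^2)^{-s}$ via $s > 0$; on $[|z|^2, 1]$ I extract the factor $(1-|z|^2)^{-(1+t+s)}$ and integrate $(1-u)^t$ over an interval of length $1-|z|^2$, again yielding $(1-|z|^2)^{-s}$.

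The main obstacle is the sphere integral estimate invoked in the second step, whose proof requires careful Gamma function asymptotics; the remaining steps are routine one-variable calculus. Both hypotheses $t > -1$ and $s > 0$ are used in an essential way to make the relevant Beta-type integrals converge.
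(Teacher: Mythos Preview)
Your argument is correct. The paper does not prove this lemma at all; it simply records it as well known and cites Theorem~1.12 of \cite{ZhuBn}. Your approach---polar decomposition, reduction to the sphere integral estimate $\int_{\Sn}|1-\langle \tilde z,\xi\rangle|^{-c}\,d\sigma(\xi)\lesssim(1-|\tilde z|^2)^{n-c}$ for $c>n$, then a one-variable analysis using $1-u|z|^2\asymp(1-u)+(1-|z|^2)$---is precisely the standard proof given in that reference, so you have supplied what the paper omits.
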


We also need a well-known variant of the previous lemma.

\begin{lemma}\label{l2}
Let $\{z_ k\}$ be a separated sequence in $\Bn$ and let $n<t<s$. Then
$$\sum_{k=1}^{\infty}\frac{(1-|z_ k|^2)^t}{|1-\langle z,z_ k \rangle |^s}\le C\,(1-|z|^2)^{t-s},\qquad z\in \Bn.$$
\end{lemma}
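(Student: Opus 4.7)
The plan is to compare the sum to an integral of a similar kernel over $\bn$, and then apply the integral estimate Lemma \ref{Ict}.

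First, I would exploit the fact that $\{z_k\}$ is separated: there is $\delta>0$ such that the Bergman balls $D_k := D(z_k,\delta/4)$ are pairwise disjoint. Over each such ball, standard estimates (see \cite{ZhuBn}) give the equivalences
$$1-|w|^2 \asymp 1-|z_k|^2, \qquad |1-\langle z,w\rangle| \asymp |1-\langle z,z_k\rangle|, \qquad v(D_k)\asymp (1-|z_k|^2)^{n+1},$$
uniformly in $k$ and $z\in\bn$. Combining these shows that, term by term,
$$\frac{(1-|z_k|^2)^t}{|1-\langle z,z_k\rangle|^s}\;\asymp\; \int_{D_k}\frac{(1-|w|^2)^{t-n-1}}{|1-\langle z,w\rangle|^s}\,dv(w).$$

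Next, since the $D_k$ are disjoint, summing over $k$ and bounding the union by $\bn$ yields
$$\sum_{k=1}^{\infty}\frac{(1-|z_k|^2)^t}{|1-\langle z,z_k\rangle|^s}\;\lesssim\;\int_{\bn}\frac{(1-|w|^2)^{t-n-1}}{|1-\langle z,w\rangle|^s}\,dv(w).$$
Finally, I would apply Lemma \ref{Ict} with the choices $t':=t-n-1$ and $s':=s-t$. The assumption $t>n$ guarantees $t'>-1$, while $s>t$ gives $s'>0$. Since $n+1+t'+s'=s$, Lemma \ref{Ict} yields the bound $C(1-|z|^2)^{-s'}=C(1-|z|^2)^{t-s}$, which is exactly the desired estimate.

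The only mildly technical step is the term-by-term equivalence with the integral over $D_k$; the constraint $t>n$ enters precisely to ensure the resulting exponent $t-n-1$ on $(1-|w|^2)$ is $>-1$ so that Lemma \ref{Ict} is applicable. There is no substantive obstacle beyond organizing these standard Bergman-ball estimates.
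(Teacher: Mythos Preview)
Your proposal is correct and follows essentially the same approach as the paper, which merely remarks that the lemma can be deduced from Lemma~\ref{Ict} after noting that a separated sequence admits pairwise disjoint Bergman balls. Your write-up is in fact more detailed than the paper's own treatment, spelling out the term-by-term comparison and the choice of exponents in Lemma~\ref{Ict}.
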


Lemma~\ref{l2} above can be deduced from Lemma \ref{Ict} after noticing that, if a sequence $\{z_ k\}$ is separated, then there is a
constant $r>0$ such that the Bergman metric balls $D(z_ k,r)$ are pairwise disjoint. The following result is from \cite{Zhao-Schur}.

\begin{lemma}\label{Z-T}
Given real numbers $b$ and $c$, consider the integral operator on $\Bn$ defined by
$$S_{b,c} f(z)=\int_{\Bn} \frac {f(w)(1-|w|^2)^b\,dv(w)}{|1-\langle z,w\rangle |^c}.$$
Let $1< p\le q<\infty$, $\alpha>-1$, $\beta>-1$, and
$$\lambda=\frac{n+1+\beta}{q}-\frac{n+1+\alpha}{p}.$$
Then the operator $S_{b,c}$ is bounded from $L^p_{\alpha}$ to $L^q_{\beta}$ if and only if
$$\alpha+1<p(b+1),\qquad c\le n+1+b+\lambda.$$
\end{lemma}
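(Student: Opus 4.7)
The plan is to handle necessity by testing on a family of reproducing-kernel type functions and sufficiency via an appropriate form of Schur's test.

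For necessity, I would test $S_{b,c}$ on the functions
$$f_a(w) = \frac{(1-|a|^2)^\tau}{(1-\langle w,a\rangle)^{t}},\qquad a \in \Bn,$$
where $t$ is chosen large and $\tau$ is determined (via Lemma~\ref{Ict}) so that $\|f_a\|_{p,\alpha}\asymp 1$ uniformly in $a$. A second application of Lemma~\ref{Ict} gives
$$S_{b,c} f_a(z) \asymp \frac{(1-|a|^2)^\tau}{(1-\langle z,a\rangle)^{t+c-n-1-b}},$$
provided $b>-1$ (which itself will be forced by the other necessary condition). A third application computes $\|S_{b,c}f_a\|_{q,\beta}$, which scales like a power of $1-|a|^2$; letting $|a|\to 1^-$ forces the scaling inequality $c\le n+1+b+\lambda$. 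The second condition $\alpha+1<p(b+1)$ is obtained as a local integrability requirement: for $S_{b,c}$ to act boundedly into $L^q_\beta$, the kernel $(1-|w|^2)^b/|1-\langle z,w\rangle|^c$ must belong to the dual of $L^p_\alpha$ in $w$ for fixed $z$, and this reduces via Lemma~\ref{Ict} at an interior point $z$ to precisely this inequality.

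For sufficiency, I would rewrite $S_{b,c}$ as a kernel operator on $L^p(dv)$ by absorbing the weights, so that its boundedness from $L^p_\alpha$ to $L^q_\beta$ becomes a standard $L^p(dv)\to L^q(dv)$ estimate with modified kernel
$$K^*(z,w)=\frac{(1-|z|^2)^{\beta/q}(1-|w|^2)^{b-\alpha/p}}{|1-\langle z,w\rangle|^c}.$$
In the diagonal case $p=q$, I would apply the classical Schur test with test function $h(z)=(1-|z|^2)^\sigma$ for a parameter $\sigma$ to be chosen; the two resulting integral inequalities, handled via Lemma~\ref{Ict}, reduce to linear inequalities in $\sigma$ that are simultaneously solvable precisely under the stated hypotheses. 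For the off-diagonal case $p<q$, I would invoke the extended version of Schur's test from \cite{Zhao-Schur}, which allows asymmetric test functions $h_1,h_2$ adapted to the $p$- and $q$-sides together with an auxiliary index. Again both inequalities reduce to exponent bookkeeping around Lemma~\ref{Ict}, with $\lambda=(n+1+\beta)/q-(n+1+\alpha)/p$ absorbing the mismatch between the two sides.

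The main technical obstacle is the case $p<q$, where the classical Schur test does not apply directly. The heart of the argument is choosing the exponents in the two test functions so that both Schur-type integral inequalities become sharp (up to constants) under Lemma~\ref{Ict}; this is exactly where the combined hypotheses $\alpha+1<p(b+1)$ and $c\le n+1+b+\lambda$ enter, with the strict inequality being essential to have room to run Lemma~\ref{Ict} on both sides, and with $\lambda$ playing the role of the interpolation parameter.
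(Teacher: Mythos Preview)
The paper does not actually prove this lemma; it is quoted verbatim as ``The following result is from \cite{Zhao-Schur}'' and no argument is given. Your proposal therefore cannot be compared to the paper's proof, but it is exactly the strategy carried out in the cited reference: necessity via testing on normalized reproducing kernels together with a local-integrability argument, and sufficiency via the generalized Schur test of \cite{Zhao-Schur} with test functions $h(z)=(1-|z|^2)^\sigma$, the off-diagonal case $p<q$ being the one that requires the extended version rather than the classical Schur test.

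One small point worth tightening in your necessity argument: the duality/local-integrability reasoning you sketch for $\alpha+1<p(b+1)$ most directly yields the \emph{non-strict} inequality $\alpha+1\le p(b+1)$. To obtain the strict inequality you must track the blow-up rate of norms, for instance by testing on $f_s(w)=(1-|w|^2)^s$ with $s\downarrow -(1+\alpha)/p$ and comparing $\|f_s\|_{p,\alpha}\asymp (sp+\alpha+1)^{-1/p}$ against the lower bound $\|S_{b,c}f_s\|_{q,\beta}\gtrsim (b+s+1)^{-1}$; the boundary case $p(b+1)=\alpha+1$ then violates the operator bound since $p>1$. Apart from this refinement your outline is sound.
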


We show that, under the same conditions, with an extra (unbounded) factor $\beta(z,w)$ in the integrand, the modified operator
is still bounded from $L^p_{\alpha}$ to $L^q_{\beta}$. Thus we consider the operator
$$T_{b,c}f(z)=\int_{\bn}\frac{f(w)\,\beta(z,w)}{|1-\langle z,w\rangle|^c}(1-|w|^2)^b\,dv(w).$$

\begin{proposition}\label{proj-ge1-beta}
Let $b$ and $c$ be real numbers. Let $1< p\le q<\infty$, $\alpha>-1$, $\beta>-1$, and
$$\lambda=\frac{n+1+\beta}{q}-\frac{n+1+\alpha}{p}.$$
If  $\alpha+1<p(b+1)$ and  $c\le n+1+b+\lambda$, then $T_{b,c}$ is bounded from $L^p_{\alpha}$ to $L^q_{\beta}$.
\end{proposition}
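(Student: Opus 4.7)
The plan is to absorb the logarithmically growing factor $\beta(z,w)$ into the kernel by trading a small power of $|1-\langle z,w\rangle|$ and matching powers of $(1-|z|^2)$, $(1-|w|^2)$, and then to reduce to Lemma~\ref{Z-T} applied to a perturbed operator $S_{b',c'}$. Because the hypotheses on $(b,c)$ are strict in the first inequality and non-strict in the second, there should be room for a perturbation by $\epsilon$ in the parameters.

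The crucial estimate is the following sub-polynomial bound on the Bergman distance: for every $\epsilon>0$,
\begin{equation*}
\beta(z,w)\le C_\epsilon\left[\frac{|1-\langle z,w\rangle|^2}{(1-|z|^2)(1-|w|^2)}\right]^{\epsilon},\qquad z,w\in\bn.
\end{equation*}
I would derive this by writing $t=|\p_z(w)|$, noting from \eqref{eq-pa} that $1-t\ge \tfrac12(1-t^2)=\tfrac12(1-|z|^2)(1-|w|^2)/|1-\langle z,w\rangle|^2$, and then using the elementary bound $\tfrac12\log\tfrac{1+t}{1-t}\le C_\epsilon (1-t)^{-\epsilon}$ valid for $0\le t<1$. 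This works globally because the right-hand side in the boxed inequality is always $\ge 1$ (by the identity \eqref{eq-pa} applied to $|\p_z(w)|<1$), so even for small $\beta(z,w)$ the bound is trivial.

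Using this, I would estimate pointwise
\begin{equation*}
|T_{b,c}f(z)|\le C_\epsilon (1-|z|^2)^{-\epsilon}\,S_{b-\epsilon,\,c-2\epsilon}|f|(z),
\end{equation*}
so that
\begin{equation*}
\|T_{b,c}f\|_{q,\beta}^{\,q}\le C'_\epsilon\,\bigl\|S_{b-\epsilon,\,c-2\epsilon}|f|\bigr\|_{q,\,\beta-\epsilon q}^{\,q}.
\end{equation*}
It therefore suffices to verify that $S_{b-\epsilon,\,c-2\epsilon}$ is bounded from $L^p_\alpha$ to $L^q_{\beta-\epsilon q}$. With $\lambda'=\frac{n+1+(\beta-\epsilon q)}{q}-\frac{n+1+\alpha}{p}=\lambda-\epsilon$, Lemma~\ref{Z-T} requires
\begin{equation*}
\alpha+1<p(b-\epsilon+1)\quad\text{and}\quad c-2\epsilon\le n+1+(b-\epsilon)+\lambda'=n+1+b+\lambda-2\epsilon,
\end{equation*}
which simplify to $\alpha+1<p(b+1)-p\epsilon$ and $c\le n+1+b+\lambda$, both holding once $\epsilon>0$ is chosen small enough (we also need $\epsilon<(\beta+1)/q$ so that the weight is admissible). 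The only potentially delicate point is that the strict inequality $\alpha+1<p(b+1)$ in the hypothesis leaves exactly the slack needed to absorb the $-p\epsilon$, while the non-strict inequality $c\le n+1+b+\lambda$ is preserved automatically; the rest is bookkeeping. This concludes the proof.
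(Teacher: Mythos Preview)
Your proof is correct and follows essentially the same approach as the paper: bound $\beta(z,w)$ by $C_\epsilon(1-|\p_z(w)|^2)^{-\epsilon}$ via its logarithmic growth and the identity \eqref{eq-pa}, then reduce to the boundedness of $S_{b-\epsilon,\,c-2\epsilon}$ from $L^p_\alpha$ to $L^q_{\beta-\epsilon q}$ via Lemma~\ref{Z-T} for $\epsilon>0$ small enough. Your write-up is in fact slightly more explicit than the paper's in checking the perturbed hypotheses (computing $\lambda'=\lambda-\epsilon$ and noting the slack from the strict inequality $\alpha+1<p(b+1)$), but the argument is the same.
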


\begin{proof}
Pick $\varepsilon>0$ so that $\alpha+1<p(b+1-\varepsilon)$ and $\beta-q\varepsilon>-1$. Since $\beta(z,w)$ grows logarithmically, we have
$$\beta(z,w)=\beta(0,\varphi_ z(w))\le C (1-|\varphi_ z(w)|^2)^{-\varepsilon}.$$
It follows from \eqref{eq-pa} that
$$|T_{b,c} f(z)| \le C (1-|z|^2)^{-\varepsilon}\int_{\Bn} \frac{(1-|w|^2)^{b-\varepsilon}}{|1-\langle z,w\rangle|^{c-2\varepsilon}}\,|f(w)|\,dv(w).$$
Thus $T_{b,c}$ is bounded from $L^p_{\alpha}$ to $L^q_{\beta}$ if the operator $S_{b-\varepsilon,c-2\varepsilon}$ is bounded
from $L^p_{\alpha}$ to $L^q_{\beta-\varepsilon q}$. The desired result then follows from the previous lemma.
\end{proof}

In a similar manner, the following version of Lemma \ref{Ict} can be obtained. The proof is left to the interested reader.

\begin{lemma}\label{Ict-beta}
Let $t>-1$, $s>0$, and $d>0$. There is a positive constant $C$ such that
$$\int_{\Bn} \frac{(1-|w|^2)^t\,\beta(z,w)^d\,dv(w)}{|1-\langle z,w\rangle |^{n+1+t+s}}\le C\,(1-|z|^2)^{-s}$$
for all $z\in \Bn$.
\end{lemma}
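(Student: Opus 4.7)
The plan is to mimic the argument used in Proposition~\ref{proj-ge1-beta}: absorb the unbounded factor $\beta(z,w)^d$ into the other terms by using the fact that the Bergman distance grows only logarithmically, and then reduce to the standard estimate in Lemma~\ref{Ict}.

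First I would fix a small $\varepsilon>0$ satisfying $d\varepsilon<\min(t+1,s)$. Since $\beta(z,w)=\beta(0,\varphi_z(w))$ grows logarithmically in $|\varphi_z(w)|$, there is a constant $C$ (depending on $d$ and $\varepsilon$) such that
$$\beta(z,w)^d\le C\,(1-|\varphi_z(w)|^2)^{-d\varepsilon}$$
for all $z,w\in\bn$. Using the identity from \eqref{eq-pa}, namely $1-|\varphi_z(w)|^2=(1-|z|^2)(1-|w|^2)/|1-\langle z,w\rangle|^2$, this yields
$$\beta(z,w)^d\le C\,\frac{|1-\langle z,w\rangle|^{2d\varepsilon}}{(1-|z|^2)^{d\varepsilon}(1-|w|^2)^{d\varepsilon}}.$$

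Plugging this pointwise bound into the integrand of the lemma gives
$$\int_{\bn}\frac{(1-|w|^2)^t\,\beta(z,w)^d}{|1-\langle z,w\rangle|^{n+1+t+s}}\,dv(w)\le C\,(1-|z|^2)^{-d\varepsilon}\int_{\bn}\frac{(1-|w|^2)^{t-d\varepsilon}\,dv(w)}{|1-\langle z,w\rangle|^{n+1+(t-d\varepsilon)+(s-d\varepsilon)}}.$$
The choice of $\varepsilon$ ensures $t-d\varepsilon>-1$ and $s-d\varepsilon>0$, so Lemma~\ref{Ict} applies to the remaining integral with the shifted parameters and bounds it by $C\,(1-|z|^2)^{-(s-d\varepsilon)}$. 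Multiplying by the prefactor $(1-|z|^2)^{-d\varepsilon}$ produces exactly $(1-|z|^2)^{-s}$, as desired.

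There is no real obstacle here: the only small point to keep track of is choosing $\varepsilon$ small enough that the shifted exponents remain admissible for Lemma~\ref{Ict} (both $t-d\varepsilon>-1$ and $s-d\varepsilon>0$), and checking that the exponent of $|1-\langle z,w\rangle|$ has the correct form $n+1+(\text{new }t)+(\text{new }s)$ so that Lemma~\ref{Ict} can be invoked directly. Both conditions are built into the choice of $\varepsilon$ above.
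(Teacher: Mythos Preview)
Your proof is correct and follows exactly the approach the paper intends: the paper leaves this proof to the reader, noting only that it is obtained ``in a similar manner'' to Proposition~\ref{proj-ge1-beta}, and your argument is precisely that---absorb $\beta(z,w)^d$ via the logarithmic growth bound $\beta(z,w)^d\le C(1-|\varphi_z(w)|^2)^{-d\varepsilon}$, expand using \eqref{eq-pa}, and reduce to Lemma~\ref{Ict} with shifted parameters.
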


The rest of this section is devoted to the proof of Theorem \ref{5}, which can be interpreted as some sort of
tangential maximum principle. We begin with the following elementary fact.

\begin{lemma}
Suppose $F$ and $G$ are holomorphic functions on $\B_2$. If
\begin{equation}
F(z_1,z_2)=G(z_1,z_2),\qquad z_k=(u/\sqrt2)e^{i\theta_k},
\label{eq1}
\end{equation}
where $\theta_k$ are arbitrary real numbers and $u$ is arbitrary from the unit disk $\D$, then $F=G$ on $\B_2$.
\label{4}
\end{lemma}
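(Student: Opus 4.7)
The plan is to set $H=F-G$ and show that every Taylor coefficient of $H$ at the origin vanishes. Since $H$ is holomorphic on $\mathbb{B}_2$ we may write
$$H(z_1,z_2)=\sum_{j,k\ge 0}a_{j,k}\,z_1^{j}z_2^{k},$$
with convergence uniform on compact subsets of $\mathbb{B}_2$. The hypothesis says that $H$ vanishes at every point of the form $(z_1,z_2)=((u/\sqrt2)e^{i\theta_1},(u/\sqrt2)e^{i\theta_2})$ with $u\in\mathbb{D}$ and $\theta_1,\theta_2\in\mathbb{R}$; note that such points lie in $\mathbb{B}_2$ since $|z_1|^2+|z_2|^2=|u|^2<1$, so the substitution makes sense.

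Next I would substitute this parametrization into the Taylor series. For $u\in\mathbb{D}$ fixed and $\theta_1,\theta_2$ arbitrary we obtain
$$0=H\bigl((u/\sqrt2)e^{i\theta_1},(u/\sqrt2)e^{i\theta_2}\bigr)=\sum_{j,k\ge 0}a_{j,k}\,\frac{u^{j+k}}{2^{(j+k)/2}}\,e^{i(j\theta_1+k\theta_2)}.$$
The key step is now separation of Fourier modes: choose any $u\in\mathbb{D}\setminus\{0\}$, multiply by $e^{-i(m\theta_1+n\theta_2)}$, and integrate over $\theta_1,\theta_2\in[0,2\pi]$ against the normalized measure $d\theta_1\,d\theta_2/(2\pi)^2$. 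The uniform absolute convergence of the Taylor series on the compact torus $\{|z_1|=|z_2|=|u|/\sqrt2\}$ justifies interchanging sum and integral, and the orthogonality of the characters $e^{i(j\theta_1+k\theta_2)}$ isolates the single term with $(j,k)=(m,n)$. This yields
$$a_{m,n}\,\frac{u^{m+n}}{2^{(m+n)/2}}=0$$
for every pair $(m,n)$, and since $u\ne 0$ we conclude $a_{m,n}=0$.

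Consequently $H$ is identically zero in a neighborhood of the origin, and by the identity principle for holomorphic functions of several variables on the connected domain $\mathbb{B}_2$ we obtain $F=G$ throughout $\mathbb{B}_2$. There is no serious obstacle here; the only point requiring a little care is the justification for exchanging the sum and the double Fourier integral, which is handled by the local uniform convergence of the Taylor expansion on the compact torus lying strictly inside $\mathbb{B}_2$.
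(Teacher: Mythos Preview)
Your proof is correct and follows essentially the same route as the paper: expand in a Taylor series, substitute the parametrization, and use orthogonality of the characters $e^{i(j\theta_1+k\theta_2)}$ on the torus to isolate each coefficient. The paper phrases the last step as ``uniqueness of Fourier coefficients on the torus,'' whereas you write out the integration explicitly and justify the interchange of sum and integral; these are the same argument.
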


\begin{proof}
Suppose
$$F(z_1,z_2)=\sum_{k,l=0}^\infty a_{kl}z_1^kz_2^l,
\qquad G(z_1,z_2)=\sum_{k,l=0}^\infty b_{kl}z_1^kz_2^l.$$
Then we have
$$\sum_{k,l=0}^\infty a_{kl}\left(\frac u{\sqrt2}\right)^{k+l}e^{ik\theta_1}e^{il\theta_2}
=\sum_{k,l=0}^\infty b_{kl}\left(\frac u{\sqrt2}\right)^{k+l}e^{ik\theta_1}e^{il\theta_2}$$
for all $u\in\D$ and all real $\theta_1$ and $\theta_2$. By the uniqueness of Fourier coefficients
on the torus, we must have $a_{kl}=b_{kl}$ for all $k$ and $l$. This shows that $F=G$ on $\B_2$.
\end{proof}

For $n>1$, $f\in H(\bn)$, and $z\in\bn-\{0\}$ we will write
$$|\nabla_tf(z)|=\sup\left\{\left|\frac{\partial f}{\partial u}(z)\right|:\|u\|=1,u\in[z]^\perp\right\}$$
and call it the complex tangential gradient of $f$ at $z$.

\begin{theorem}\label{5}
Let $n>1$ and  $f\in H(\bn)$. If $|\nabla_tf(z)|\to0$ as $|z|\to1^-$,  then $f$ is constant.
\end{theorem}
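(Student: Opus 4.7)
The plan is to exploit the hypothesis along each complex line through the origin to upgrade the boundary decay of $|\nabla_tf|$ into a pointwise vanishing identity for \emph{all} complex-tangential derivatives of $f$ on $\bn$, and then to derive $\nabla f\equiv 0$ via a short $\bar\partial$-computation.

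First, fix $\zeta\in\sn$ and any unit vector $w\in\cn$ with $\langle w,\zeta\rangle=0$, and set $h(u)=\partial_wf(u\zeta)$ for $u\in\D$. Then $h$ is holomorphic on $\D$, and since $\langle w,u\zeta\rangle=\bar u\langle w,\zeta\rangle=0$ we see that $w$ is a unit complex-tangential direction at $u\zeta$; hence $|h(u)|\le|\nabla_tf(u\zeta)|\to 0$ as $|u|\to 1^-$. Extending $h$ by zero on $\partial\D$ yields a continuous function on $\overline{\D}$ that is holomorphic inside and vanishes on the boundary, so the maximum modulus principle forces $h\equiv 0$. Letting $\zeta$ range over $\sn$ and $w$ over the unit sphere of $\zeta^\perp$, we deduce that $\partial_wf(z)=0$ whenever $z\in\bn$ and $\langle w,z\rangle=0$.

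Next, for each $z\in\bn\setminus\{0\}$ the $\C$-linear functional $w\mapsto\sum_j w_j\partial_jf(z)$ vanishes on the complex hyperplane $\ker(w\mapsto\sum_j w_j\bar z_j)$, so the two functionals must be proportional: there is a scalar $\mu(z)$ with $\partial_jf(z)=\mu(z)\bar z_j$ for every $j$. Hence, for any distinct indices $j\neq k$,
$$\bar z_k\,\partial_jf(z)=\mu(z)\bar z_j\bar z_k=\bar z_j\,\partial_kf(z)$$
throughout $\bn$ (trivially also at $z=0$). Since $\partial_jf$ and $\partial_kf$ are holomorphic on $\bn$, they are annihilated by $\partial/\partial\bar z_k$; applying this operator to the identity, and using $\partial\bar z_k/\partial\bar z_k=1$ while $\partial\bar z_j/\partial\bar z_k=0$ (because $j\neq k$), gives $\partial_jf\equiv 0$ on $\bn$. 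Since $n>1$, for every index $j$ we may choose some $k\neq j$, so every $\partial_jf$ vanishes and $f$ is constant.

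The delicate step is the initial slicing argument: one must observe that the \emph{uniform} decay of $|\nabla_tf(z)|$ (taken as a supremum over all tangential unit directions) is exactly what is required to continuously extend $h$ to $\overline{\D}$ by zero, thereby unlocking the maximum modulus principle. After that, both the identification $\nabla f(z)=\mu(z)\bar z$ and the $\bar\partial$-computation are essentially formal.
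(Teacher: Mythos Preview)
Your proof is correct and takes a genuinely different, more streamlined route than the paper's.

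The paper first treats the case $n=2$ by parametrizing $z_k=(u/\sqrt2)e^{i\theta_k}$, using the one-variable maximum principle together with a Fourier-uniqueness argument (Lemma~\ref{4}) to deduce the identity $z_1\partial_1f=z_2\partial_2f$ on $\B_2$, and then a second slicing to force all Taylor coefficients to vanish; the general $n$ is then reduced to $n=2$ by restricting to two-dimensional slices. By contrast, you slice along every complex line through the origin to kill \emph{all} complex-tangential derivatives at once, obtain the linear-algebra consequence $\nabla f(z)=\mu(z)\bar z$, and finish with a one-line $\bar\partial$ computation applied to $\bar z_k\,\partial_jf=\bar z_j\,\partial_kf$. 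Your argument avoids the separate $n=2$ base case, the Fourier step, and the induction, and it makes transparent why $n>1$ is essential (one needs some $k\neq j$ to differentiate in $\bar z_k$). The paper's approach, on the other hand, is more hands-on with power series and perhaps more elementary in that it never explicitly invokes $\bar\partial$; but your proof is shorter and conceptually cleaner.

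One small remark on presentation: when you say ``extending $h$ by zero on $\partial\D$ yields a continuous function on $\overline\D$,'' you are implicitly using that the hypothesis $|\nabla_tf(z)|\to0$ is \emph{uniform} in $z$ as $|z|\to1^-$, so that $\sup_{|u|=r}|h(u)|\to0$ as $r\to1^-$. This is the intended meaning of the hypothesis, and you flag it in your closing paragraph, but it would do no harm to state it explicitly at the point of use. Alternatively, one can bypass the continuous extension entirely: for any $u_0\in\D$ and $|u_0|<r<1$, the maximum principle on $\{|u|\le r\}$ gives $|h(u_0)|\le\max_{|u|=r}|h(u)|$, and letting $r\to1^-$ yields $h(u_0)=0$.
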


\begin{proof}
We will first prove the case  $n=2$. In this case, the condition
\begin{equation}
|\nabla_tf(z)|\to0,\qquad |z|\to1^-,
\label{eq2}
\end{equation}
is equivalent to
$$\lim_{|z|\to1^-}\left(\overline z_2\frac{\partial f}{\partial z_1}(z)-
\overline z_1\frac{\partial f}{\partial z_2}(z)\right)=0.$$
Let
$$z_1=\frac{u}{\sqrt2}e^{i\theta_1},\qquad z_2=\frac u{\sqrt2}e^{i\theta_2},$$
where $u\in\D$ and $\theta_k$ are arbitrary real numbers. Since
$$|z_1|^2+|z_2|^2=|u|^2,$$
we see that $|z|\to1^-$ if and only if $|u|\to1^-$. Thus condition (\ref{eq2}) implies that
$$\lim_{|u|\to1^-}\left[e^{-i\theta_2}\frac{\partial f}{\partial z_1}\left(\frac u{\sqrt2}
e^{i\theta_1},\frac u{\sqrt2}e^{i\theta_2}\right)-e^{-i\theta_1}\frac{\partial f}{\partial z_2}
\left(\frac u{\sqrt2}e^{i\theta_1},\frac u{\sqrt2}e^{i\theta_2}\right)\right]=0.$$
For any fixed $\theta_k$ the expression inside the brackets above is an analytic function $F(u)$ on
the unit disk. By the classical maximum principle for analytic functions on the unit disk, we conclude that the
condition in (\ref{eq2}) implies that $F(u)$ is identically zero on $\D$. Therefore, the condition in (\ref{eq2}) implies that
$$e^{i\theta_1}\frac{\partial f}{\partial z_1}\left(\frac u{\sqrt2}e^{i\theta_1},
\frac u{\sqrt2}e^{i\theta_2}\right)=e^{i\theta_2}\frac{\partial f}{\partial z_2}
\left(\frac u{\sqrt2}e^{i\theta_1},\frac u{\sqrt2}e^{i\theta_2}\right)$$
for all $u\in\D$. Multiply the above equation by $u/\sqrt2$, we conclude that
\begin{equation}
z_1\frac{\partial f}{\partial z_1}(z_1,z_2)=z_2\frac{\partial f}{\partial z_2}(z_1,z_2)
\label{eq3}
\end{equation}
whenever
$$z_k=\frac u{\sqrt2}e^{i\theta_k},\qquad k=1,2.$$
By Lemma \ref{4}, we see that the condition in (\ref{eq2}) implies
that the identity in (\ref{eq3}) must hold for all $z=(z_1,z_2)$ in the unit ball.

Write
$$f(z)=\sum_{i,j=0}^\infty a_{ij}z_1^iz_2^j$$
and assume that the identity in (\ref{eq3}) holds for all $z=(z_1,z_2)\in\B_2$. Then
$$\sum_{i,j=0}^\infty ia_{ij}z_1^iz_2^j=\sum_{i,j=0}^\infty ja_{ij}z_1^iz_2^j.$$
This gives $ia_{ij}=ja_{ij}$ for all $i$ and $j$, which implies that $a_{ij}=0$ whenever $i\not=j$. Writing $a_j=a_{jj}$, we obtain
$$f(z)=\sum_{j=0}^\infty a_j(z_1z_2)^j.$$
In this case, we have
$$\overline z_2\frac{\partial f}{\partial z_1}(z)-\overline z_1
\frac{\partial f}{\partial z_2}(z)=(|z_2|^2-|z_1|^2)\sum_{j=1}^\infty ja_j(z_1z_2)^{j-1}.$$
Consider the case in which
$$z_1=u\sin\theta,\qquad z_2=u\cos\theta,$$
where $u\in\D$ is arbitrary and $0<\theta<\pi/4$ is fixed. Then
$$\overline z_2\frac{\partial f}{\partial z_1}(z)-\overline z_1\frac{\partial f}{\partial z_2}(z)
=|u|^2\cos(2\theta)\sum_{j=1}^\infty ja_j(u^2\sin\theta\cos\theta)^{j-1}.$$
Let $|u|\to1^-$ and apply the classical maximum principle on the unit disk, we must have $a_j=0$ for all $j\ge1$,
namely, $f$ is constant. This completes the proof of the theorem in the case $n=2$.

Next let us assume that $n\ge3$ and $|\nabla_tf(z)|\to0$ as $|z|\to1^-$ for some $f\in H(\bn)$. We want to show that $f$ is
constant. For any $z=(z_1,z_2,z_3,\cdots,z_n)\in\bn$, the vector $(z_2,-z_1,0,\cdots,0)$ is perpendicular to $z$. Therefore,
$$\lim_{|z|\to1^-}\left(\overline z_2\frac{\partial f}{\partial z_1}(z)-\overline z_1
\frac{\partial f}{\partial z_2}(z)\right)=0.$$
We proceed to show that this implies that $f$ is independent of the first two variables.

Fix $z=(z_1, z_2, z_3,\cdots,z_n)\in\bn$ (we specifically mention that it is OK if some of the $z_k$ are $0$) and write
$$1-r^2=|z_3|^2+\cdots+|z_n|^2,$$
where $0<r\le1$. Consider the function
$$g(w_1,w_2)=f(rw_1,rw_2,z_3,\cdots,z_n),$$
where $w=(w_1,w_2)\in\B_2$. It is clear that $|w|\to1^-$ if and only if
$$|rw_1|^2+|rw_2|^2+|z_3|^2+\cdots+|z_n|^2\to1^-.$$
So we also have
$$\lim_{|w|\to1^-}\left[\overline w_2\frac{\partial g}{\partial w_1}(w)-\overline w_1
\frac{\partial g}{\partial w_2}(w)\right]=0.$$
By the $n=2$ case that we have already proved, $g$ must be constant. If we choose
$w\in\B_2$ such that $rw_k=z_k$ for $k=1,2$. Then
$$f(z_1,z_2,z_3,\cdots,z_n)=g(w_1,w_2)=g(0,0)=f(0,0,z_3,\cdots,z_n).$$

Repeat the argument for the first and $k$th variable, where $k\ge3$, and let $k$ run from
$3$ to $n$. The result is
$$f(z_1,z_2,z_3,\cdots,z_n)=f(0,0,0,\cdots,0).$$
Since $z=(z_1,\cdots,z_n)\in\bn$ is arbitrary, we have shown that $f$ is constant.
\end{proof}

For $f\in H(\bn)$ we write
$$\nabla f(z)=\left(\frac{\partial f}{\partial z_1}(z),\cdots,\frac{\partial f}{\partial z_n}(z)\right),\qquad z\in \Bn$$
and call $|\nabla f(z)|$ the complex gradient of $f$ at $z$. As a consequence of Theorem \ref{5}, we obtain the following maximum
principle in terms of the invariant gradient $\widetilde{\nabla} f(z)=\nabla (f\circ \varphi_ z)(0)$.

\begin{coro}\label{C5}
Let $n>1$ and  $f\in H(\bn)$. If $(1-|z|^2)^{-1/2}|\widetilde\nabla f(z)|\to0$ as $|z|\to1^-$, then $f$ is constant.
\end{coro}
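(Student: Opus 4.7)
The plan is to reduce the corollary to Theorem \ref{5} by showing that the hypothesis on the invariant gradient forces the complex tangential gradient $|\nabla_t f(z)|$ to tend to zero as $|z| \to 1^-$. Once that is established, Theorem \ref{5} immediately yields that $f$ is constant.

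The essential ingredient is an explicit decomposition of $|\widetilde\nabla f(z)|$ in terms of the radial and complex tangential parts of $\nabla f(z)$. Using the explicit form of the Möbius automorphism $\varphi_z$ and the chain rule, one computes the action of $D\varphi_z(0)$ on $\C^n$. A direct calculation from the formula $\varphi_z(w)=(z-P_z(w)-s_zQ_z(w))/(1-\langle w,z\rangle)$ shows that, for $z\neq 0$, the derivative $D\varphi_z(0)$ acts as multiplication by $-(1-|z|^2)$ on the radial direction $z/|z|$ and by $-(1-|z|^2)^{1/2}$ on any vector $u\in[z]^\perp$. Applying the chain rule to $\widetilde\nabla f(z)=\nabla(f\circ\varphi_z)(0)$ and computing its modulus in an orthonormal basis consisting of the radial direction together with an orthonormal basis $\{u_1,\dots,u_{n-1}\}$ of $[z]^\perp$, one obtains
$$|\widetilde\nabla f(z)|^2=(1-|z|^2)^2\,|\mathcal R f(z)|^2+(1-|z|^2)\sum_{k=1}^{n-1}\left|\frac{\partial f}{\partial u_k}(z)\right|^2,$$
where $\mathcal R f(z)$ is the radial complex derivative. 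Since the sum over an orthonormal basis of $[z]^\perp$ dominates the supremum in the definition of $|\nabla_tf(z)|$, this yields the key pointwise inequality
$$|\nabla_tf(z)|^2\le (1-|z|^2)^{-1}\,|\widetilde\nabla f(z)|^2,\qquad z\in\bn\setminus\{0\}.$$

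With this inequality in hand, the hypothesis $(1-|z|^2)^{-1/2}|\widetilde\nabla f(z)|\to 0$ as $|z|\to 1^-$ immediately implies that $|\nabla_tf(z)|\to 0$ as $|z|\to 1^-$. Since $n>1$, Theorem \ref{5} applies and gives that $f$ is constant. I expect the main (and essentially only) technical point to be the careful verification of the decomposition formula for $|\widetilde\nabla f(z)|^2$; the two different powers of $(1-|z|^2)$ appearing on the radial versus tangential pieces are exactly what makes the factor $(1-|z|^2)^{-1/2}$ in the hypothesis the right normalization to capture the tangential vanishing required by Theorem \ref{5}.
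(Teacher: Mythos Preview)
Your proposal is correct and follows exactly the same route as the paper: reduce to Theorem~\ref{5} via the relationship between $|\widetilde\nabla f(z)|$ and the complex tangential gradient $|\nabla_t f(z)|$. The paper simply invokes this relationship as a black box from \cite[Theorem~7.22]{ZhuBn}, whereas you outline its derivation from the explicit action of $D\varphi_z(0)$ on the radial and tangential directions; apart from this, the two arguments are identical.
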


\begin{proof}
This follows from \cite[Theorem 7.22]{ZhuBn} and Theorem \ref{5}.
\end{proof}

\section{A family of weighted BMO spaces}

Let $\gamma \in \mathbb{R}$. For any positive radius $r$ and every exponent $p$ with $1\le p<\infty$, the space $BMO^p_{r,\gamma}$
consists of those functions $f\in L^p_{loc}(\Bn)$ (the space of locally $L^p$ integrable functions  on $\Bn$) such that
$$\|f\|_{BMO^p_{r,\gamma}}=\sup \big \{ (1-|z|^2)^{\gamma} MO_{p,r}(f)(z):z\in \Bn \big \}<\infty,$$
where
$$MO_{p,r}(f)(z)= \left [\frac{1}{v_{\sigma}(D(z,r))}\int_{D(z,r)} |f(w)-\widehat{f_ r}(z)|^p\,dv_{\sigma}(w) \right ]^{1/p}$$
is the $p$-mean oscillation of $f$ at $z$ in the Bergman metric. Here
$$\widehat{f_ r}(z)=\frac{1}{v_{\sigma}(D(z,r))} \int_{D(z,r)} f(w)\,dv_{\sigma}(w)$$
is the averaging function of $f$ and $dv_\sigma(z)=(1-|z|^2)^\sigma\,dv(z)$. At first glance, the function $MO_{p,r}(f)$ seems to depend
on the real parameter $\sigma$, but the weight factor $(1-|z|^2)^{\sigma}$ in $dv_{\sigma}$ is essentially canceled out by the extra factor
$(1-|z|^2)^{\sigma}$ in $v_{\sigma}(D(z,r))\asymp (1-|z|^2)^{n+1+\sigma}$. As a consequence, the space $BMO^p_{r,\gamma}$ is actually
independent of the weight parameter $\sigma$. In particular, this independence on $\sigma$ is a consequence of the following lemma.

\begin{lemma}\label{Lem-1}
Let $1\le p<\infty$, $f\in L^p_{loc}(\Bn)$, and $r>0$. Then $f\in BMO^p_{r,\gamma}$ if and only if there exists some constant $C>0$ such
that for any $z\in \Bn$, there is a constant $\lambda_ z$ satisfying
\begin{equation}\label{Eq-L1}
\frac{(1-|z|^2)^{\gamma p}}{v_{\sigma}(D(z,r))}\int_{D(z,r)} |f(w)-\lambda_ z|^p\,dv_{\sigma}(w) \le C.
\end{equation}
\end{lemma}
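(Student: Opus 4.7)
The forward direction is immediate: if $f \in BMO^p_{r,\gamma}$, then the choice $\lambda_z = \widehat{f_r}(z)$ makes \eqref{Eq-L1} hold with $C = \|f\|_{BMO^p_{r,\gamma}}^p$.

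For the reverse direction, the plan is to show that when some family of constants $\{\lambda_z\}$ satisfies \eqref{Eq-L1}, we can replace each $\lambda_z$ by $\widehat{f_r}(z)$ at the cost of only a constant factor. I would apply Minkowski's inequality in $L^p(D(z,r), dv_\sigma)$ to write
$$\left(\int_{D(z,r)} |f(w) - \widehat{f_r}(z)|^p\,dv_\sigma(w)\right)^{1/p} \le \left(\int_{D(z,r)} |f(w) - \lambda_z|^p\,dv_\sigma(w)\right)^{1/p} + |\lambda_z - \widehat{f_r}(z)|\, v_\sigma(D(z,r))^{1/p}.$$
The first term, divided by $v_\sigma(D(z,r))^{1/p}$ and weighted by $(1-|z|^2)^\gamma$, is bounded by $C^{1/p}$ directly from the hypothesis.

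The remaining task is to control $|\lambda_z - \widehat{f_r}(z)|$. Rewriting the average as
$$\lambda_z - \widehat{f_r}(z) = \frac{1}{v_\sigma(D(z,r))} \int_{D(z,r)} (\lambda_z - f(w))\,dv_\sigma(w),$$
I would then apply the triangle inequality followed by Jensen's inequality (or equivalently Hölder with exponents $p$ and $p/(p-1)$ against the probability measure $dv_\sigma/v_\sigma(D(z,r))$) to obtain
$$|\lambda_z - \widehat{f_r}(z)| \le \left(\frac{1}{v_\sigma(D(z,r))} \int_{D(z,r)} |f(w) - \lambda_z|^p\,dv_\sigma(w)\right)^{1/p}.$$
Multiplying by $(1-|z|^2)^\gamma$ and invoking \eqref{Eq-L1} again bounds this by $C^{1/p}$.

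Combining the two estimates gives $(1-|z|^2)^\gamma MO_{p,r}(f)(z) \le 2 C^{1/p}$, uniformly in $z$, so $f \in BMO^p_{r,\gamma}$. There is no genuine obstacle here; the only subtlety worth mentioning is that the argument makes no use of the specific weight exponent $\sigma$ (beyond the fact that $dv_\sigma$ is a positive Borel measure on $D(z,r)$), which is exactly the point of the lemma and explains the advertised $\sigma$-independence of $BMO^p_{r,\gamma}$.
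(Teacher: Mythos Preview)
Your proof is correct and follows essentially the same approach as the paper: both use the triangle (Minkowski) inequality in $L^p$ to split $MO_{p,r}(f)(z)$ into a term controlled directly by \eqref{Eq-L1} and the difference $|\widehat{f_r}(z)-\lambda_z|$, which is then bounded via H\"older/Jensen against the same average. Your observation that the argument is insensitive to $\sigma$ is also exactly the point the paper makes.
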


\begin{proof}
If $f\in BMO^p_{r,\gamma}$, then \eqref{Eq-L1} holds with $C=\|f\|_{BMO^p_{r,\gamma}}$ and $\lambda_ z=\widehat{f_ r}(z)$. Conversely,
if \eqref{Eq-L1} is satisfied, then by the triangle inequality for the $L^p$-norm, $MO_{p,r}(f)(z)$ is less than or equal to
$$\left[\frac{1}{v_{\sigma}(D(z,r))}\int_{D(z,r)}|f(w)-\lambda_ z|^p\,dv_{\sigma}(w)\right ]^{\frac1p}
+|\widehat{f_ r}(z)-\lambda_ z|.$$
By H\"{o}lder's inequality,
\begin{eqnarray*}
|\widehat{f_ r}(z)-\lambda_ z|&=&\left | \frac{1}{v_{\sigma}(D(z,r))}\int_{D(z,r)}\!\! (f(w)-\lambda_ z)\,dv_{\sigma}(w)\right |\\
&\le& \left[\frac{1}{v_{\sigma}(D(z,r))}\int_{D(z,r)}\!\! |f(w)-\lambda_ z|^p\,dv_{\sigma}(w)\right ]^{\frac1p}.
\end{eqnarray*}
It follows that
\[ (1-|z|^2) ^{\gamma} MO_{p,r}(f)(z) \le 2 C^{1/p}\]
for all $z\in \Bn$, so that $f\in BMO^p_{r,\gamma}$.
\end{proof}

For a continuous function $f$ on $\Bn$ let
$$\omega_ r(f)(z)=\sup\big \{ |f(z)-f(w)|:w\in D(z,r)\big \}.$$
The function $\omega_ r(f)(z)$ is called the oscillation of $f$ at the point $z$ in the Bergman metric. For any $r>0$ and $\gamma \in
\mathbb{R}$, let $BO_{r,\gamma}$ denote the space of continuous functions $f$ on $\Bn$ such that
$$\|f\|_{BO_{r,\gamma}}=\sup_ {z\in\Bn} (1-|z|^2)^{\gamma} \omega_ r(f)(z)<\infty.$$

\begin{lemma}\label{L-CBO}
Let $r>0$,  $\gamma \in \mathbb{R}$, and $f$ be a continuous function on $\Bn$. If $\gamma\ge0$, then
$f\in BO_{r,\gamma}$ if and only if there is a constant $C>0$ such that
$$|f(z)-f(w)|\le C \frac{\beta(z,w)+1}{\min (1-|z|,1-|w|)^{\gamma}}$$
for all $z$ and $w$ in $\bn$. If $\gamma<0$, then $f\in BO_{r,\gamma}$ if and only if there is a constant $C>0$ such that
$$|f(z)-f(w)|\le C \frac{\beta(z,w)+1}{\big [\max (1-|z|,1-|w|)\big ]^{-\gamma}}\,|1-\langle z,w \rangle |^{-2\gamma}$$
for all $z$ and $w$ in $\Bn$.
\end{lemma}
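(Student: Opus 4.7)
The plan is to treat the two directions of each biconditional separately, with the forward direction requiring a chain-of-balls argument along a Bergman geodesic that splits naturally into cases by the sign of $\gamma$.

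The ``if'' direction is easy and handles both cases uniformly. For any $w \in D(z,r)$ the standard comparisons $1-|z|^2 \asymp 1-|w|^2 \asymp |1-\langle z,w\rangle|$ hold on Bergman balls of fixed radius, and $\beta(z,w)+1 \le r+1$. Substituting these into the assumed pointwise inequality in either case, the right-hand side collapses to a constant multiple of $(1-|z|^2)^{-\gamma}$, so $\omega_r(f)(z) \lesssim (1-|z|^2)^{-\gamma}$ and $f \in BO_{r,\gamma}$.

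For the ``only if'' direction, suppose $f \in BO_{r,\gamma}$ and fix $z,w \in \bn$. I would connect $z$ to $w$ by a chain $z_0=z,z_1,\ldots,z_m=w$ of points along the Bergman geodesic from $z$ to $w$, with $\beta(z_{k-1},z_k) < r$ and $m \lesssim \beta(z,w)+1$. Telescoping gives
\[
|f(z)-f(w)| \le \sum_{k=1}^{m} \omega_r(f)(z_{k-1}) \le C\sum_{k=1}^{m}(1-|z_{k-1}|^2)^{-\gamma},
\]
so everything reduces to estimating $\max_k (1-|z_k|^2)^{-\gamma}$ along the geodesic. For $\gamma \ge 0$, the modulus along a Bergman geodesic does not exceed $\max(|z|,|w|)$, so $1-|z_k|^2 \ge \min(1-|z|^2,1-|w|^2)$, and summing $m$ identical upper bounds yields the stated inequality directly. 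For $\gamma < 0$, the geodesic may dip toward the origin; parametrising it as $\zeta(t)=\varphi_z(t\varphi_z(w))$, using \eqref{eq-pa} together with the pointwise lower bound $|1-t\langle\varphi_z(w),z\rangle| \ge 1-t|\varphi_z(w)|$ yields, after simplification, $1-|\zeta(t)|^2 \lesssim |1-\langle z,w\rangle|^2/(1-|w|^2)$; the symmetric argument using $\varphi_w$ replaces $|w|$ by $|z|$, so jointly one obtains
\[
1-|\zeta(t)|^2 \lesssim \frac{|1-\langle z,w\rangle|^2}{\max(1-|z|^2,1-|w|^2)}.
\]
Raising to the positive power $-\gamma$ and summing then produces precisely the factor $|1-\langle z,w\rangle|^{-2\gamma}/[\max(1-|z|,1-|w|)]^{-\gamma}$ that appears in the statement.

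The main obstacle is the geodesic modulus estimate in the $\gamma < 0$ case. Everything else is a routine chain argument, but the factor $|1-\langle z,w\rangle|^{-2\gamma}$ in the statement looks unmotivated until one notices that it is forced by the second identity in \eqref{eq-pa}, which controls precisely how deep into the ball a Bergman geodesic between $z$ and $w$ can reach.
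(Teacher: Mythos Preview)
Your proposal is correct and follows essentially the same approach as the paper: both argue the easy converse via the standard comparisons on a Bergman ball, and both handle the forward direction by telescoping along a Bergman geodesic, using $(1-|z_i|)\ge\min(1-|z|,1-|w|)$ when $\gamma\ge0$ and the M\"obius identity \eqref{eq-pa} to bound $1-|z_i|$ from above when $\gamma<0$. The only cosmetic difference is that for $\gamma<0$ you parametrise the geodesic explicitly as $\varphi_z(t\varphi_z(w))$ and estimate $1-|\zeta(t)|^2$ directly, whereas the paper instead uses $|\varphi_z(z_i)|\le|\varphi_z(w)|$ to bound $|1-\langle z,z_i\rangle|$; both manipulations yield the same inequality $1-|z_i|\lesssim |1-\langle z,w\rangle|^2/\max(1-|z|^2,1-|w|^2)$.
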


\begin{proof}
Assume that $f\in BO_{r,\gamma}$. If $\beta(z,w)\le r$, the result is clear, because then
$$|1-\langle z,w \rangle | \asymp 1-|z| \asymp 1-|w|.$$
Fix any $z,w\in \Bn$ with $\beta(z,w)>r$. Let $\lambda(t)$, $0\le t\le 1$, be the geodesic in the Bergman metric from $z$ to $w$.
Let $N=[\beta(z,w)/r]+1$ and $t_ i=i/N$, $0\le i\le N$, where $[x]$ denotes the largest integer less than or equal to $x$.
Set $z_i=\lambda (t_ i)$, $0\le i\le N$. Then
$$\beta(z_ i,z_{i+1})=\frac{\beta(z,w)}{N}\le r.$$
Therefore,
\begin{eqnarray*}
|f(z)-f(w)|&\le& \sum_{i=1}^{N} |f(z_{i-1})-f(z_ i)| \le \sum_{i=1}^{N} \omega_ r(f)(z_ i)\\
& \le &\|f\|_{BO_{r,\gamma}} \sum_{i=1}^{N} (1-|z_ i|)^{-\gamma} .
\end{eqnarray*}

If $\gamma \ge 0$, it follows from the obvious inequality
$$(1-|z_ i|)\ge \min(1-|z|,1-|w|)$$
that
\begin{eqnarray*}
|f(z)-f(w)| &\le& \|f\|_{BO_{r,\gamma}} \frac{N} {\min (1-|z|,1-|w|)^{\gamma}}\\
&\le& \|f\|_{BO_{r,\gamma}}\,\frac{\beta(z,w)/r+1}{\min (1-|z|,1-|w|)^{\gamma}}\\
&\le& \max(1,1/r)\, \|f\|_{BO_{r,\gamma}}\, \frac{\beta(z,w)+1}{\min (1-|z|,1-|w|)^{\gamma}}.
\end{eqnarray*}

If $\gamma<0$, the result is proved in the same way once the inequality
\begin{equation}\label{eq-geo}
1-|z_ i| \le \frac{2\,|1-\langle z,w \rangle |^2}{\max(1-|z|^2,1-|w|^2)}
\end{equation}
is established. To prove this, simply note that the M\"{o}bius transformation $\varphi_ z$ sends the geodesic joining $z$ and $w$ to
the geodesic joining $0$ and $\varphi_ z(w)$. This gives
$$1-|\varphi_ z(w)|^2 \le 1-|\varphi_ z(z_ i)|^2.$$
Developing this inequality using \eqref{eq-pa}, we get
$$\frac{1-|w|^2}{|1-\langle z,w \rangle |^2} \le \frac{1-|z_ i|^2}{|1-\langle z,z_ i  \rangle |^2}\le \frac{2}{|1-\langle z,z_ i \rangle |},$$
which gives
$$|1-\langle z,z_ i  \rangle | \le \frac{2\, |1-\langle z,w \rangle |^2}{1-|w|^2}.$$
Interchanging the roles of $z$ and $w$, we get \eqref{eq-geo}.

The converse implication is obvious.
\end{proof}

A consequence of the above lemma is that the space $BO_{r,\gamma}$ is independent of the choice of $r$. So we will simply
write $BO_{\gamma}=BO_{1,\gamma}$ and
$$\|f\|_{BO_{\gamma}}=\|f\|_{BO_{1,\gamma}}=\sup_ {z\in\Bn} (1-|z|^2)^{\gamma} \omega_ 1(f)(z).$$

Let $0<p<\infty$, $\gamma \in \mathbb{R}$, and $r>0$. We say that $f\in BA^p_{r,\gamma}$ if $f\in L^p_{loc}(\Bn)$ and
$$\|f\|_{BA^p_{r,\gamma}}=\sup_{z\in \Bn} (1-|z|^2)^{\gamma} \left [ \widehat{|f|^p_ r}(z)\right ]^{1/p}<\infty.$$
We proceed to show that the space $BA^p_{r,\gamma}$ is also independent of $r$.

For $\sigma>-1$ and $c>0$ the generalized Berezin transform $B_{c,\sigma}(\varphi)$ of a function $\varphi\in L^1(\Bn,dv_{\sigma})$
is defined as
$$B_{c,\sigma} (\varphi) (z)= (1-|z|^2)^{c}\int_{\Bn} \frac{\varphi (w)}{|1-\langle w,z\rangle|^{n+1+c+\sigma}} \,dv_ {\sigma}(w).$$
In the case when $c=n+1+\sigma$, this coincides with the ordinary Berezin transform
$B_{\sigma} \varphi (z)= \langle \varphi k^{\sigma}_ z,k^{\sigma}_ z\rangle_{\sigma}$.

\begin{lemma}\label{CMf}
Let $0<p\le q<\infty$, $\alpha,\beta>-1$, and $f\in L^q_{loc}(\Bn)$. Set
$$\gamma=(n+1+\beta)/q-(n+1+\alpha)/p,\qquad d\mu_{f,\beta}=|f|^q dv_{\beta}.$$
The following conditions are equivalent:
\begin{itemize}
\item[(i)] The embedding $i:A^p_{\alpha}\rightarrow L^q(\Bn,d\mu_{f,\beta})$ is bounded.
\item [(ii)] $f\in BA^q_{r,\gamma}$ for some (or all) $r>0$.
\item[(iii)] $(1-|z|^2)^{\gamma q} B_{c,\sigma}(|f|^q)\in L^{\infty}(\Bn)$ for all $\sigma>-1+\gamma q$ and all $c>\max(0,-\gamma q)$.
\end{itemize}
\end{lemma}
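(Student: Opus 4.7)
The plan is to establish the cycle (i) $\Rightarrow$ (ii) $\Leftrightarrow$ (iii) $\Rightarrow$ (i), in which the equivalence (ii) $\Leftrightarrow$ (iii) simultaneously confirms that $BA^q_{r,\gamma}$ does not depend on the choice of $\sigma$ used in the definition of $\widehat{|f|^q_r}$. For (i) $\Rightarrow$ (ii), I would apply the embedding to the family
$$g_z(w)=\frac{(1-|z|^2)^b}{(1-\langle w,z\rangle)^{b+(n+1+\alpha)/p}},\qquad b>0\ \text{large},$$
for which Lemma \ref{Ict} yields $\|g_z\|_{p,\alpha}\asymp 1$, while on the Bergman ball $D(z,r)$ we have $|g_z(w)|\asymp(1-|z|^2)^{-(n+1+\alpha)/p}$. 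Restricting $\int|g_z|^q\,d\mu_{f,\beta}\lesssim\|g_z\|_{p,\alpha}^q$ to $D(z,r)$ and inserting $v_\beta(D(z,r))\asymp(1-|z|^2)^{n+1+\beta}$ gives $(1-|z|^2)^{\gamma q}\widehat{|f|^q_r}(z)\lesssim 1$, which is (ii) once the exponents reorganize through the defining relation for $\gamma$.

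The implication (iii) $\Rightarrow$ (ii) is immediate: restrict the integral defining $B_{c,\sigma}(|f|^q)(z)$ to $D(z,r)$, on which $|1-\langle w,z\rangle|^{n+1+c+\sigma}\asymp(1-|z|^2)^{n+1+c+\sigma}$, and compare with the average. For the converse (ii) $\Rightarrow$ (iii), fix an $r$-lattice $\{a_k\}$ from Lemma \ref{covering}. Since $(1-|w|^2)\asymp(1-|a_k|^2)$ and $|1-\langle w,z\rangle|\asymp|1-\langle a_k,z\rangle|$ on each $D(a_k,r)$, hypothesis (ii) yields $\int_{D(a_k,r)}|f|^q\,dv_\sigma\lesssim(1-|a_k|^2)^{n+1+\sigma-\gamma q}$, whence
$$B_{c,\sigma}(|f|^q)(z)\lesssim(1-|z|^2)^c\sum_k\frac{(1-|a_k|^2)^{n+1+\sigma-\gamma q}}{|1-\langle a_k,z\rangle|^{n+1+c+\sigma}}.$$
Lemma \ref{l2} with $t=n+1+\sigma-\gamma q$ and $s=n+1+c+\sigma$ bounds this by $(1-|z|^2)^{-\gamma q}$; the applicability conditions $n<t<s$ correspond exactly to the stated hypotheses $\sigma>-1+\gamma q$ and $c>\max(0,-\gamma q)$.

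Finally, for (ii) $\Rightarrow$ (i) I would fix an $r$-lattice $\{a_k\}$ and invoke the standard subharmonic estimate
$$|g(w)|^q\lesssim(1-|a_k|^2)^{-q(n+1+\alpha)/p}\Bigl(\int_{D(a_k,2r)}|g|^p\,dv_\alpha\Bigr)^{q/p},\qquad w\in D(a_k,r),$$
valid for $g\in A^p_\alpha$. Multiplying by $|f|^q$, integrating over $D(a_k,r)$ against $dv_\beta$, and combining with $\int_{D(a_k,r)}|f|^q\,dv_\beta\lesssim(1-|a_k|^2)^{n+1+\beta-\gamma q}$ from (ii), the powers of $(1-|a_k|^2)$ cancel by the very definition of $\gamma$, leaving $\int_{D(a_k,r)}|g|^q|f|^q\,dv_\beta\lesssim\bigl(\int_{D(a_k,2r)}|g|^p\,dv_\alpha\bigr)^{q/p}$. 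Summing, using $q/p\ge 1$ so that $\sum_k x_k^{q/p}\le(\sum_k x_k)^{q/p}$ for nonnegative $x_k$, together with the finite overlap property (iii) of Lemma \ref{covering}, produces (i). I expect the main obstacle to be the exponent bookkeeping in (ii) $\Rightarrow$ (iii), where the ranges of $\sigma$ and $c$ declared in the statement must line up precisely with the hypotheses of Lemma \ref{l2}; the remaining implications are routine covering and subharmonicity arguments.
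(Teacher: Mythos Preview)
Your argument is correct and, for the equivalence (ii) $\Leftrightarrow$ (iii), follows the paper's proof essentially verbatim: restrict the Berezin integral to a Bergman ball for (iii) $\Rightarrow$ (ii), and for (ii) $\Rightarrow$ (iii) decompose over an $r$-lattice and apply Lemma~\ref{l2} with $t=n+1+\sigma-\gamma q$, $s=n+1+c+\sigma$. One small remark: the condition $t<s$ from Lemma~\ref{l2} only gives $c>-\gamma q$, not the full $c>\max(0,-\gamma q)$; the extra $c>0$ is simply part of the hypothesis in (iii) (it is there so that $B_{c,\sigma}$ behaves well), so your claim that the ranges ``correspond exactly'' is a slight overstatement, though nothing in the proof is affected.

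The only substantive difference from the paper is in (i) $\Leftrightarrow$ (ii). The paper simply invokes \cite[Theorem~50]{ZZ}, the standard Carleson-measure characterization for the embedding $A^p_\alpha\hookrightarrow L^q(\mu)$ when $p\le q$, and then translates the geometric condition $\mu_{f,\beta}(D(z,r))\lesssim(1-|z|^2)^{(n+1+\alpha)q/p}$ into the $BA^q_{r,\gamma}$ language. You instead reproduce that theorem's proof directly: testing on normalized kernels for (i) $\Rightarrow$ (ii), and for (ii) $\Rightarrow$ (i) the subharmonic mean-value estimate on each lattice ball, followed by $\sum x_k^{q/p}\le(\sum x_k)^{q/p}$ (valid since $q/p\ge1$) and the finite overlap of the balls $D(a_k,2r)$. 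This is the standard argument and is entirely sound; it makes your write-up self-contained at the cost of a few extra lines, whereas the paper's version is shorter but relies on an external reference.
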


\begin{proof}
By \cite[Theorem 50]{ZZ}, condition (i) is equivalent to
$$\mu_ {f,\beta} (D(z,r))\le C (1-|z|^2)^{(n+1+\alpha)q/p}$$
for some (or all) $r>0$. Since
$$\widehat{|f|^q_ r}(z)\asymp\frac{\mu_ {f,\beta} (D(z,r))}{(1-|z|^2)^{n+1+\beta}},$$
it follows that (i) and (ii) are equivalent.

Since $|1-\langle z,w \rangle |\asymp (1-|z|^2)$ for $w\in D(z,r)$, we have
\begin{eqnarray*}
B_{c,\sigma}(|f|^q)(z)&=&(1-|z|^2)^{c}\int_{\Bn}\frac{|f(w)|^q\,dv_{\sigma}(w)}{|1-\langle z,w \rangle |^{n+1+c+\sigma}}\\
&\ge& (1-|z|^2)^{c}\int_{D(z,r)}\frac{|f(w)|^q\,dv_{\sigma}(w)}{|1-\langle z,w \rangle |^{n+1+c+\sigma}}\\
&\asymp& \widehat{|f|^q_ r}(z).
\end{eqnarray*}
This proves that (iii) implies (ii).

To finish the proof, let $\{a_j\}$ be an $r$-lattice. Then
\begin{eqnarray*}
B_{c,\sigma}(|f|^q)(z)&=&(1-|z|^2)^{c}\int_{\Bn}\frac{|f(w)|^q\,dv_{\sigma}(w)}{|1-\langle z,w \rangle |^{n+1+c+\sigma}}\\
& \le& (1-|z|^2)^{c}\sum_{j=1}^\infty\int_{D(a_ j,r)}\frac{|f(w)|^q\,dv_{\sigma}(w)}{|1-\langle z,w \rangle |^{n+1+c+\sigma}}.
\end{eqnarray*}
By the estimate in (2.20) on page 63 of \cite{ZhuBn}, we have
$$|1-\langle z,w \rangle |\asymp |1-\langle z,a_ j \rangle |,\qquad w\in D(a_ j,r).$$
Thus
$$B_{c,\sigma}(|f|^q)(z) \lesssim (1-|z|^2)^{c}\sum_{j=1}^\infty\frac{(1-|a_ j|^2)^{\sigma-\beta}}{|1-\langle z,a_ j
\rangle |^{n+1+c+\sigma}}\,\,\mu_ {f,\beta} (D(a_ j,r)).$$
So condition (ii) implies that
$$B_{c,\sigma}(|f|^q)(z) \lesssim  (1-|z|^2)^{c}\sum_{j=1}^\infty\frac{(1-|a_ j|^2)^{n+1+\sigma-\gamma q}}{|1-\langle z,a_ j
\rangle |^{n+1+c+\sigma}}.$$
Since $\sigma>-1+\gamma q$, or $n+1+\sigma-\gamma q>n$, an application of Lemma \ref{l2} shows that
condition (ii) implies (iii).
\end{proof}


As a consequence of the previous result, we see that the space $BA^p_{r,\gamma}$ is independent of the choice of $r$.
Thus we will simply call it $BA^p_{\gamma}$.

\begin{lemma}\label{CBO}
Let $f\in L^1_{loc}(\Bn)$ and $r>0$. Then $\widehat{f}_ r$ is continuous.
\end{lemma}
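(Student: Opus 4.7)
The plan is to write the averaging function as a ratio
$$\widehat{f}_r(z)=\frac{N(z)}{V(z)},\qquad N(z)=\int_{\Bn}\chi_{D(z,r)}(w)\,f(w)\,dv_\sigma(w),\quad V(z)=v_\sigma(D(z,r)),$$
and prove that both $N$ and $V$ are continuous on $\Bn$. Since $V(z)>0$ everywhere, continuity of $\widehat{f}_r$ then follows from the quotient rule for continuous functions.

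Fix $z_0\in\Bn$ and pick a neighborhood $U$ of $z_0$ whose closure lies in $\Bn$. Since the Bergman distance $\beta$ is continuous on $\Bn\times\Bn$, the family of balls $\{D(z,r):z\in U\}$ is contained in a single compact set $K\Subset\Bn$. The strategy is then a standard dominated-convergence argument: for any sequence $z_k\to z_0$ in $U$, the functions $\chi_{D(z_k,r)}(\cdot)\,|f(\cdot)|$ are all bounded pointwise by $\chi_K\,|f|$, which is $dv_\sigma$-integrable because $f\in L^1_{loc}(\Bn)$ and $K$ is compact in $\Bn$. Thus, provided that $\chi_{D(z_k,r)}\to\chi_{D(z_0,r)}$ almost everywhere, dominated convergence gives $N(z_k)\to N(z_0)$ and, taking $f\equiv 1$, also $V(z_k)\to V(z_0)$.

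The one step that needs a short argument is the almost-everywhere convergence of the characteristic functions. By continuity of $\beta$, for every $w$ with $\beta(z_0,w)\neq r$ one has $\beta(z_k,w)\to\beta(z_0,w)$ and hence $\chi_{D(z_k,r)}(w)\to\chi_{D(z_0,r)}(w)$. It remains to show that the Bergman sphere $S_0=\{w\in\Bn:\beta(z_0,w)=r\}$ has Lebesgue measure zero. Here I would use the M\"obius invariance $\beta(z_0,w)=\beta(0,\varphi_{z_0}(w))$ from \eqref{eq-pa}, together with $\beta(0,u)=\tfrac12\log\tfrac{1+|u|}{1-|u|}$, so that $\varphi_{z_0}(S_0)=\{u\in\Bn:|u|=\tanh r\}$ is a Euclidean sphere and hence null; pulling back by the smooth diffeomorphism $\varphi_{z_0}$ preserves measure-zero sets.

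The only mildly subtle point is this null-set verification; the rest of the argument is routine dominated convergence on a compactly supported integrand. Putting the three pieces together, $N$ and $V$ are continuous at the arbitrary point $z_0\in\Bn$, and therefore so is $\widehat{f}_r$.
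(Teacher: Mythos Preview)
Your argument is correct. The paper itself omits the proof entirely, stating only that it is elementary, so there is nothing substantive to compare; your dominated-convergence argument is exactly the kind of standard verification the authors had in mind. One minor remark: the sentence ``since $\beta$ is continuous, the family of balls $\{D(z,r):z\in U\}$ is contained in a single compact set $K\Subset\Bn$'' is true but not quite for the reason stated---what you actually use is the triangle inequality $\beta(0,w)\le\beta(0,z)+r$ together with the fact that closed Bergman balls about the origin are Euclidean balls of radius $\tanh(\cdot)<1$ and hence compact in $\Bn$. With that tiny clarification the proof is complete.
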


\begin{proof}
The proof is elementary and we omit the details here.
\end{proof}

\begin{theorem}\label{BMO-S}
Suppose $r>0$, $\gamma \in \mathbb{R}$, $1\le p<\infty$, and $f\in L^p_{loc}(\Bn)$. The following conditions are equivalent:
\begin{itemize}
\item[(a)] $f\in BMO^p_{r,\gamma}$.
\item[(b)] $f=f_ 1+f_ 2$ with $f_ 1\in BO_{\gamma}$ and $f_ 2 \in BA^p_{\gamma}$.
\item[(c)] For some (or all) $\sigma>\max(-1,-1+\gamma p)$ and for each $c>\max(0,-2\gamma p)$ we have
$$\sup_{z\in \Bn}  \int_{\Bn} |f(w)-\widehat{f_ r}(z)|^p \frac{(1-|z|^2)^{c+\gamma p}}{|1-\langle z,w \rangle |^{n+1+c+\sigma}}
\,dv_{\sigma}(w)<\infty.$$
\item[(d)] For some (or all) $\sigma>\max(-1,-1+\gamma p)$ and for each $c>\max(0,-2\gamma p)$ there is a function
$\lambda_ z$ such that
$$\sup_{z\in \Bn}  \int_{\Bn} |f(w)-\lambda_ z|^p \frac{(1-|z|^2)^{c+\gamma p}}{|1-\langle z,w \rangle |^{n+1+c+\sigma}}
\,dv_{\sigma}(w)<\infty.$$
\end{itemize}
\end{theorem}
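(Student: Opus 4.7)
The plan is to prove the cycle $(a)\Rightarrow(b)\Rightarrow(c)\Rightarrow(d)\Rightarrow(a)$. The step $(c)\Rightarrow(d)$ is trivial by taking $\lambda_z=\widehat{f_r}(z)$. For $(d)\Rightarrow(a)$, I would restrict the integral in (d) to $D(z,r)$, on which $|1-\langle z,w\rangle|\asymp 1-|z|^2$ and hence $(1-|z|^2)^{n+1+c+\sigma}\asymp |1-\langle z,w\rangle|^{n+1+c+\sigma}$; the integrand then collapses to a constant multiple of $(1-|z|^2)^{\gamma p}|f(w)-\lambda_z|^p/v_\sigma(D(z,r))$, and Lemma~\ref{Lem-1} yields $f\in BMO^p_{r,\gamma}$.

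For $(a)\Rightarrow(b)$ I would take the canonical decomposition $f_1=\widehat{f_r}$, $f_2=f-f_1$. Continuity of $f_1$ is Lemma~\ref{CBO}. To see $f_1\in BO_\gamma$, fix $z,w$ with $\beta(z,w)\le 1$, pick $R>r+1$ so that $D(z,r)\cup D(w,r)\subset D(z,R)\cap D(w,R)$, and insert $\pm\widehat{f_R}(z)$; since the volumes $v_\sigma(D(\cdot,\rho))$ involved are all comparable to $(1-|z|^2)^{n+1+\sigma}$, H\"older's inequality controls each piece by $MO_{p,R}(f)(z)$, which is radius-independent (a fact one reads off from Lemma~\ref{Lem-1} applied at radius $R$) and therefore bounded by a multiple of $(1-|z|^2)^{-\gamma}$. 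For $f_2\in BA^p_\gamma$ one writes $f_2(w)=(f(w)-\widehat{f_r}(z))+(\widehat{f_r}(z)-\widehat{f_r}(w))$, integrates the $p$-th power over $D(z,r)$, and uses the definition of $BMO^p_{r,\gamma}$ together with the already-established $BO_\gamma$ bound on $\widehat{f_r}$. For $(b)\Rightarrow(c)$, decompose
$$|f(w)-\widehat{f_r}(z)|^p\lesssim |f_1(w)-f_1(z)|^p+|f_1(z)-\widehat{f_r}(z)|^p+|f_2(w)|^p,$$
noting $|f_1(z)-\widehat{f_r}(z)|=|\widehat{(f_2)_r}(z)|\lesssim (1-|z|^2)^{-\gamma}$. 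The constant-in-$w$ middle term is absorbed by Lemma~\ref{Ict} (using $c>0$); the $f_2$ term is handled by the equivalence (ii)$\Leftrightarrow$(iii) of Lemma~\ref{CMf} applied to $|f_2|^p$ (with $q=p$, $\beta=\sigma$), which is exactly an integral of this form; the $f_1$ term is controlled by the pointwise estimate of Lemma~\ref{L-CBO} followed by Lemma~\ref{Ict-beta}, which absorbs the extra $\beta(z,w)^p$ factor.

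The main obstacle is the $f_1$ piece of $(b)\Rightarrow(c)$ in the range $\gamma<0$, where Lemma~\ref{L-CBO} delivers the bound $|f_1(w)-f_1(z)|\lesssim(\beta(z,w)+1)\,\bigl[\max(1-|z|,1-|w|)\bigr]^{-\gamma}|1-\langle z,w\rangle|^{-2\gamma}$, introducing the unfavorable factor $|1-\langle z,w\rangle|^{-2\gamma p}$ in the integrand. This is precisely why the hypothesis $c>\max(0,-2\gamma p)$ is imposed: the extra room in the kernel exponent exactly compensates, so after reduction the resulting exponent $n+1+c+\sigma+2\gamma p$ still satisfies the hypothesis of Lemma~\ref{Ict-beta} and produces a bound of the correct order $(1-|z|^2)^{-\gamma p}$. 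The passage between "some $\sigma$" and "all $\sigma$" (and analogously for $c$) is a monotonicity argument parallel to the one in Lemma~\ref{CMf}, so that the statements involving "some (or all)" parameters are in fact equivalent.
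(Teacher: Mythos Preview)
Your overall scheme $(a)\Rightarrow(b)\Rightarrow(c)\Rightarrow(d)\Rightarrow(a)$ and the handling of $(b)\Rightarrow(c)$, $(c)\Rightarrow(d)$, and $(d)\Rightarrow(a)$ match the paper's proof closely, including the identification of the $\gamma<0$ case as the delicate one and the use of Lemma~\ref{L-CBO} together with Lemmas~\ref{Ict} and~\ref{Ict-beta}. (One small slip: in your $(b)\Rightarrow(c)$ decomposition, the equality $|f_1(z)-\widehat{f_r}(z)|=|\widehat{(f_2)_r}(z)|$ is not correct for an arbitrary decomposition $f=f_1+f_2$, since $f_1(z)-\widehat{(f_1)_r}(z)$ need not vanish; however the bound $\lesssim(1-|z|^2)^{-\gamma}$ you want still holds, using $f_1\in BO_\gamma$ for that extra piece.)

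The genuine gap is in $(a)\Rightarrow(b)$. You start from $f\in BMO^p_{r,\gamma}$, pick $R>r+1$, and reduce the oscillation of $\widehat{f_r}$ to $MO_{p,R}(f)(z)$. You then assert that this is controlled because ``radius-independence reads off from Lemma~\ref{Lem-1} applied at radius $R$''. But Lemma~\ref{Lem-1} works at a \emph{fixed} radius: it says nothing about passing from $BMO^p_{r,\gamma}$ to $BMO^p_{R,\gamma}$. Radius-independence is in fact a \emph{consequence} of the theorem you are proving (via the $r$-free condition (b)), so invoking it here is circular. One can rescue your argument by a covering/chaining lemma (cover $D(z,R)$ by finitely many overlapping $r$-balls and telescope the averages), but that is extra work you have not done. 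The paper sidesteps this neatly: since $r>0$ is arbitrary in (a), it suffices to prove $BMO^p_{2r,\gamma}\subset BO_\gamma+BA^p_\gamma$. Starting from the $2r$-assumption, both $D(z,r)$ and $D(w,r)$ (for $\beta(z,w)\le r$) sit inside $D(z,2r)$, so $|\widehat{f_r}(z)-\widehat{f_r}(w)|$ is directly controlled by $MO_{p,2r}(f)(z)$, which is bounded by hypothesis. This is the cleanest fix for your argument.
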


\begin{proof}
That (c) implies (d) is obvious, and the implication (d) $\Rightarrow$ (a) is a consequence of Lemma \ref{Lem-1} and the inequality
\begin{eqnarray*}
&&\frac{(1-|z|^2)^{\gamma p}}{v_{\sigma}(D(z,r))}\int_{D(z,r)} \!\! |f(w)-\lambda_ z|^p\,dv_{\sigma}(w)\\
&\lesssim&\int_{\Bn} \! |f(w)-\lambda_ z|^p\frac{(1-|z|^2)^{c+\gamma p}}{|1-\langle z,w \rangle |^{n+1+c+\sigma}} dv_{\sigma}(w),
\end{eqnarray*}
which follows from the well-known facts that
$$|1-\langle z,w\rangle |\asymp (1-|z|^2),\qquad v_{\sigma}(D(z,r))\asymp (1-|z|^2)^{n+1+\sigma},$$
for all $z\in\Bn$ and $w\in D(z,r)$.

The proof of (a) $\Rightarrow$ (b) can be done as in \cite[Theorem 5]{Zhu-Pac}. Indeed, since $r$ is arbitrary, it suffices to show that
$$BMO^p_{2r,\gamma}\subset BO_{\gamma}+ BA^p_{\gamma}.$$
Given $f\in BMO^p_{2r,\gamma}$ and points $z,w\in \Bn$ with $\beta(z,w)\le r$, we have
\begin{eqnarray*}
 |\widehat{f_ r}(z)-\widehat{f_ r}(w)| &\le& |\widehat{f_ r}(z)-\widehat{f}_ {2r}(z)|+|\widehat{f}_{ 2r}(z)-\widehat{f_ r}(w)|\\
 & \le& \frac{1}{v_{\sigma}(D(z,r))}\int_{D(z,r)} \!\!|f(u)-\widehat{f}_ {2r}(z)|\,dv_{\sigma}(u)\\
 &&\quad +\frac{1}{v_{\sigma}(D(w,r))}\int_{D(w,r)}\!\! |f(u)-\widehat{f}_ {2r}(z)|\,dv_{\sigma}(u).
\end{eqnarray*}
Since $v_{\sigma}(D(w,r))\asymp v_{\sigma}(D(z,r))$ for $w\in D(z,r)$, and since $D(z,r)$ and $D(w,r)$ are both contained in
$D(z,2r)$, it follows from H\"{o}lder's inequality that the two integral summands above are both bounded by constant times
$(1-|z|^2)^{-\gamma}\|f\|_{BMO^p_{2r,\gamma}}$. This together with Lemma \ref{CBO} proves that $\widehat{f_ r}$ belongs to
$BO_ {r,\gamma}=BO_{\gamma}$. On the other hand, we can prove that the function $g=f-\widehat{f_ r}$ is in $BA^p_{\gamma}$
whenever $f\in BMO^p_{2r,\gamma}$. In fact, it is rather easy to see that $f\in BMO^p_{2r,\gamma}$ implies that
$f\in BMO^p_{r,\gamma}$. By the triangle inequality for $L^p$ integrals,
$$\big[\widehat{|g|^p}_ r (z)\big ]^{1/p}\le\left[\frac{1}{v_{\sigma}(D(z,r))}\int_{D(z,r)} \!\!|f(u)-\widehat{f}_ {r}(z)|^p
\,dv_{\sigma}(u)\right]^{\frac1p}+ \omega_ r(\widehat{f_ r})(z).$$
Since $ \widehat{f_ r} \in BO_{r,\gamma}$ and $f\in BMO^p_{r,\gamma}$, we deduce that $g$ belongs to $BA^p_{\gamma}$.

To show that (b) implies (c), first observe that it follows from Lemma \ref{Ict} that the integral appearing in part (c) is dominated by
$$(1-|z|^2)^{\gamma p} \Big ( B_{c,\sigma}(|f|^p)(z)+ |\widehat{f}_ r(z)|^p\Big ),$$
and by H\"{o}lder's inequality, we have $|\widehat{f}_ r(z)|^p\le \widehat{|f|^p_ r} (z)$. Thus Lemma~\ref{CMf} shows that
$f\in BA^p_\gamma$ implies condition (c). On the other hand, if $f\in BO_{\gamma}$, we write
$$f(w)-\widehat{f_ r}(z) =\frac{1}{v_{\sigma}(D(z,r))} \int_{D(z,r)}\big (f(w)- f(\zeta)\big )\,dv_{\sigma}(\zeta)$$
and use Lemma \ref{L-CBO} and the triangle inequality to obtain
$$|f(w)-\widehat{f_ r}(z)|\le C \|f\|_{BO_{\gamma}} \,\frac{\beta(z,w)+1}{\min (1-|z|,1-|w|)^{\gamma}}, \qquad \gamma\ge 0,$$
and
$$|f(w)-\widehat{f_ r}(z)|\le C \|f\|_{BO_{\gamma}} \,\frac{(\beta(z,w)+1)\,|1-\langle z,w\rangle |^{-2\gamma}}{(1-|z|)^{-\gamma}},
\qquad \gamma < 0.$$
In both cases,  the integral estimates in Lemmas \ref{Ict} and \ref{Ict-beta} show that (c) holds if $f\in BO_{\gamma}$. This
shows that condition (b) implies (c) and completes the proof of the theorem.
\end{proof}

One of the consequences of the above result is that the space $BMO^p_{r,\gamma}$ is also independent of $r$. So from now on
it will simply be denoted by $BMO^p_{\gamma}$.

Next we are going to identify the space of all holomorphic functions in $BMO^p_{\gamma}$ with certain Bloch-type spaces.
Recall that for $\alpha\ge0$, the Bloch type space $\mathcal{B}^{\alpha}=\mathcal{B}^{\alpha}(\Bn)$ consists of holomorphic functions
$f$ in $\Bn$ for which
$$\|f\|_{\mathcal{B}^{\alpha}}=|f(0)|+\sup_{z\in \Bn} (1-|z|^2)^{\alpha} |\nabla f(z)|<\infty.$$
Note that the complex gradient $\nabla f(z)$ can be replaced by the radial derivative $Rf(z)$. We will simply obtain equivalent norms.
When $\alpha>1/2$, a description can also be obtained using the invariant gradient $\widetilde{\nabla} f(z)=
\nabla (f\circ \varphi_ z)(0)$. That is, for $\alpha>1/2$, a holomorphic function $f$ belongs to $\mathcal{B}^{\alpha}$ if and only if
$$|f(0)|+\sup_{z\in \Bn} (1-|z|^2)^{\alpha-1}|\widetilde{\nabla} f(z)|<\infty,$$
and this quantity defines an equivalent norm in $\mathcal{B}^{\alpha}$. Note that when $n=1$, this is true for all $\alpha\ge0$,
because in this case we actually have $\widetilde{\nabla} f(z)=(1-|z|^2) f'(z)$.

When $0<\alpha<1$, the Bloch type space $\mathcal{B}^{\alpha}$ coincides (with equivalent norms) with the holomorphic Lipschitz
space $\Lambda_{1-\alpha}=\Lambda_{1-\alpha}(\Bn)$ consisting of all holomorphic functions $f$ in $\Bn$  such that
$$\|f\|_{\Lambda_{1-\alpha}}=|f(0)|+\sup \left \{ \frac{|f(z)-f(w)|}{|z-w|^{1-\alpha}}:\,z,w\in \Bn,\,z\neq w\right \}<\infty .$$
Note that when $\alpha=0$, the space ${\mathcal B}^\alpha$ consists of holomorphic functions $f$ with bounded partial derivatives.
Equivalently, ${\mathcal B}^0$ consists of all holomorphic functions $f$ such that
$$\sup\left\{\frac{|f(z)-f(w)|}{|z-w|}:z\not=w\right\}<\infty.$$
This space is not what is usually called the Lipschitz space $\Lambda_1$.
We refer to \cite[Chapter 7]{ZhuBn} for all these properties of Bloch and Lipschitz type spaces.

\begin{proposition}\label{Ba}
Let $\gamma \in \mathbb{R}$, $1\le p<\infty$, and $f\in H(\Bn)$. Then
$f\in  BMO^p_{\gamma}$ if and only if
$$\|f\|_{\gamma,*}=\sup_{z\in \Bn} (1-|z|^2)^{\gamma} |\widetilde{\nabla }f(z)|<\infty.$$
\end{proposition}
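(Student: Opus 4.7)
The plan is to conjugate by the Möbius automorphism $\varphi_z$ and reduce both directions to Euclidean estimates on a fixed ball $D(0,r'):=\{|\zeta|<\tanh r\}$. The essential identities are $\widetilde\nabla f(z)=\nabla g(0)$ with $g:=f\circ\varphi_z$, the Möbius invariance $|\widetilde\nabla g(\eta)|=|\widetilde\nabla f(\varphi_z(\eta))|$, and the pointwise comparability $|\nabla g(\eta)|\asymp|\widetilde\nabla g(\eta)|$ on $D(0,r')$ with constants depending only on $r'<1$. These follow from $\widetilde\nabla g(\eta)=D\varphi_\eta(0)^T\nabla g(\eta)$ together with the boundedness of $D\varphi_\eta(0)$ and its inverse on a relatively compact subset.

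For sufficiency, assume $\|f\|_{\gamma,*}<\infty$. I would prove the stronger statement $f\in BO_\gamma$, from which $f\in BMO^p_\gamma$ is immediate by taking $f_1=f$ and $f_2=0$ in Theorem~\ref{BMO-S}(b). For $w\in D(z,r)$ and $\zeta=\varphi_z(w)$, the chain
$$|f(w)-f(z)|=|g(\zeta)-g(0)|\le|\zeta|\sup_{0\le t\le1}|\nabla g(t\zeta)|\lesssim\sup_{0\le t\le1}|\widetilde\nabla f(\varphi_z(t\zeta))|,$$
combined with $(1-|\varphi_z(t\zeta)|^2)\asymp(1-|z|^2)$ (since $\varphi_z(t\zeta)\in D(z,r)$), gives $(1-|z|^2)^\gamma|f(w)-f(z)|\lesssim\|f\|_{\gamma,*}$ uniformly in $w\in D(z,r)$.

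For necessity, assume $f\in BMO^p_\gamma$. Lemma~\ref{Lem-1} with $\sigma=0$ and $v_0(D(z,r))\asymp(1-|z|^2)^{n+1}$ produces a constant $\lambda_z$ with
$$(1-|z|^2)^{\gamma p-n-1}\int_{D(z,r)}|f(w)-\lambda_z|^p\,dv(w)\lesssim 1.$$
The substitution $w=\varphi_z(\zeta)$, whose real Jacobian equals $\bigl((1-|z|^2)/|1-\langle\zeta,z\rangle|^2\bigr)^{n+1}\asymp(1-|z|^2)^{n+1}$ on $D(0,r')$, transforms this into
$$(1-|z|^2)^{\gamma p}\int_{|\zeta|<r'}|g(\zeta)-\lambda_z|^p\,dv(\zeta)\lesssim 1.$$
Since each component of $\nabla g=\nabla(g-\lambda_z)$ is holomorphic on $\bn$, the Cauchy integral formula on a slightly smaller Euclidean ball, followed by the sub-mean-value property for $|g-\lambda_z|^p$, yields the Poincaré-type bound $|\nabla g(0)|^p\lesssim\int_{|\zeta|<r'}|g(\zeta)-\lambda_z|^p\,dv(\zeta)$; combining with the previous display and using $\widetilde\nabla f(z)=\nabla g(0)$ produces $(1-|z|^2)^\gamma|\widetilde\nabla f(z)|\lesssim1$.

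The only genuine obstacle is the bookkeeping inside the change of variables, namely tracking the powers of $1-|z|^2$ that come from the Jacobian, the weight $(1-|w|^2)^\sigma$, and the volume of $D(z,r)$. The Cauchy–Poincaré estimate and the Möbius invariance of $|\widetilde\nabla|$ are standard; the conceptual core of the proposition is the recognition that, once one passes through $\varphi_z$, the invariant gradient at $z$ is literally the Euclidean gradient at $0$ of the holomorphic pullback.
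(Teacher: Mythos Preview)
Your proof is correct, and the necessity direction is essentially the paper's argument made self-contained: the paper quotes the inequality
\[
|\widetilde\nabla f(z)|^p\lesssim\frac1{v_\sigma(D(z,r))}\int_{D(z,r)}|f(w)-f(z)|^p\,dv_\sigma(w)
\]
from \cite[p.~182]{ZhuBn} and then uses a sub-mean-value lemma to replace $f(z)$ by $\widehat f_r(z)$, whereas you derive the same bound (with an arbitrary $\lambda_z$) by explicit M\"obius pullback and Cauchy estimates.

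The sufficiency direction, however, is genuinely different. The paper verifies the \emph{global} Berezin-type condition~(d) of Theorem~\ref{BMO-S} with $\lambda_z=f(z)$, invoking an external estimate \cite[Lemma~7]{PZ-1} that dominates integrals of $|f(w)-f(z)|^p$ by integrals of $|\widetilde\nabla f(w)|^p$, and then applies Lemma~\ref{Ict}. You instead prove the stronger \emph{pointwise} statement $f\in BO_\gamma$ by the one-line chain $|f(w)-f(z)|=|g(\zeta)-g(0)|\lesssim\sup_{D(z,r)}|\widetilde\nabla f|$ via the mean-value inequality on the pullback. Your route is more elementary and self-contained (no outside lemma, no integral kernel estimate), and it yields the sharper conclusion $H(\bn)\cap BMO^p_\gamma\subset BO_\gamma$; the paper's route, on the other hand, slots directly into the equivalence scheme of Theorem~\ref{BMO-S} and establishes the implication (i)$\Rightarrow$(v) of Theorem~\ref{bmo} in the process.
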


\begin{proof}
We will show that  condition (d) of Theorem \ref{BMO-S} is satisfied with $\lambda_ z=f(z)$ if $\|f\|_{\gamma,*}<\infty$.
To this end, let $\sigma>\max(-1,-1+p\gamma)$ and $c>\max(0,-\gamma p)$. By \cite[Lemma 7]{PZ-1} and Lemma \ref{Ict}, we have
\begin{eqnarray*}
&&\int_{\Bn} |f(w)-f(z)|^p \frac{(1-|z|^2)^{c+\gamma p}}{|1-\langle z,w \rangle |^{n+1+c+\sigma}} dv_{\sigma}(w)\\
&\le& C \int_{\Bn} |\widetilde{\nabla} f(w)|^p \frac{(1-|z|^2)^{c+\gamma p}}{|1-\langle z,w \rangle |^{n+1+c+\sigma}} dv_{\sigma}(w)\\
&\le& C \|f\|^p_{\gamma,*} \,(1-|z|^2)^{c+\gamma p} \int_{\Bn} \frac{dv_{\sigma-\gamma p}(w)}{|1-\langle z,w \rangle |^{n+1+c+\sigma}}\\
&\le& C.
\end{eqnarray*}
Thus $\|f\|_{\gamma,*}<\infty$ implies that $f\in BMO^p_\gamma$.

To prove the other implication, we use the inequality
$$|\widetilde{\nabla}f(z)|^p \lesssim \frac{1}{v_{\sigma}(D(z,r))} \int_{D(z,r)} |f(w)-f(z)|^p\,dv_{\sigma}(w),$$
which appears on page 182 of \cite{ZhuBn}. By the triangle inequality for $L^p$ spaces,
$$|\widetilde\nabla f(z)|\le MO_{p,r}(f)(z)+|f(z)-\widehat f_r(z)|.$$
Applying Lemma 2.24 of \cite{ZhuBn} to the function $g(w)=f(w)-\widehat f_r(z)$ and the point $z$, we find a constant $C$ such that
$$|f(z)-\widehat f_r(z)|\le C\,MO_{p,r}(f)(z)$$
for all $z\in\Bn$. Thus $f\in BMO^p_\gamma$ implies $\|f\|_{\gamma,*}<\infty$.
\end{proof}

\begin{coro}
Let $\gamma \in \mathbb{R}$ and $1\le p<\infty$. Then
\begin{enumerate}
\item[(a)] If $n=1$, we have
$$H(\Bn)\cap BMO^p_{\gamma}=\begin{cases}
\mathcal{B}^{1+\gamma}, & \gamma \ge -1,\cr {\mathbb C}, & \gamma<-1.\end{cases}$$
\item[(b)] If $n>1$, we have
$$H(\Bn)\cap BMO^p_{\gamma}=\begin{cases}
\mathcal{B}^{1+\gamma}, &\gamma > -1/2,\cr {\mathbb C}, & \gamma<-1/2.\end{cases}$$
\item[(c)] If $n>1$ and $\gamma=-\frac12$, the space $H(\Bn)\cap BMO^p_{\gamma}$ consists of those holomorphic
functions $f$ on $\Bn$ with
$$\sup_{z\in \Bn} (1-|z|^2)^{-1/2} |\widetilde{\nabla }f(z)|<\infty.$$
\end{enumerate}
\end{coro}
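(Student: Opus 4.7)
The plan is to use Proposition \ref{Ba} to reduce the characterization to the single inequality
$$\sup_{z\in\Bn}(1-|z|^2)^\gamma |\widetilde\nabla f(z)|<\infty,\eqno{(\ast)}$$
and then split into cases according to $n$ and the sign of $\gamma+\tfrac12$ (or $\gamma+1$ when $n=1$). Part (c) is literally the restatement of $(\ast)$ for $\gamma=-1/2$, so the real work lies in (a) and (b).

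For part (a), I would use the one-variable identity $\widetilde\nabla f(z)=(1-|z|^2)f'(z)$ to rewrite $(\ast)$ as $\sup(1-|z|^2)^{1+\gamma}|f'(z)|<\infty$. If $\gamma\ge-1$, this is precisely the definition (via the radial/complex derivative) of $\mathcal{B}^{1+\gamma}$. If $\gamma<-1$, then the exponent $1+\gamma$ is negative, so the bound forces $|f'(z)|\le C(1-|z|^2)^{-(1+\gamma)}\to0$ as $|z|\to1^-$; since $f'$ is holomorphic on $\D$ with boundary values zero, the classical maximum modulus principle gives $f'\equiv0$, hence $f\in\C$.

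For part (b) with $\gamma>-1/2$, set $\alpha=1+\gamma>1/2$. Then $(\ast)$ reads $\sup(1-|z|^2)^{\alpha-1}|\widetilde\nabla f(z)|<\infty$, which is one of the equivalent norms on $\mathcal{B}^\alpha$ valid precisely in the range $\alpha>1/2$ (recalled just before the proposition). So $H(\Bn)\cap BMO^p_\gamma=\mathcal{B}^{1+\gamma}$. For $\gamma<-1/2$, multiplying $(\ast)$ by $(1-|z|^2)^{-\gamma-1/2}$ yields
$$(1-|z|^2)^{-1/2}|\widetilde\nabla f(z)|\le C(1-|z|^2)^{-\gamma-1/2},$$
and the right-hand side tends to $0$ as $|z|\to1^-$ because $-\gamma-1/2>0$. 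Corollary \ref{C5} then forces $f$ to be constant.

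The only non-routine ingredient is the $\gamma<-1/2$ case in dimension $n>1$, and this is exactly what Corollary \ref{C5} (a tangential maximum principle derived from Theorem \ref{5}) was set up to handle; all other cases are straightforward translations between $(\ast)$ and the known characterizations of Bloch-type spaces, together with the one-variable identity $\widetilde\nabla f=(1-|z|^2)f'$.
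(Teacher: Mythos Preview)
Your proof is correct and follows essentially the same route as the paper: invoke Proposition~\ref{Ba} to reduce to the condition $(\ast)$, then use the remarks on Bloch-type spaces (the one-variable identity $\widetilde\nabla f=(1-|z|^2)f'$ and the invariant-gradient characterization of $\mathcal{B}^\alpha$ for $\alpha>1/2$) together with Corollary~\ref{C5} for the $n>1$, $\gamma<-1/2$ case. The paper's proof is just a one-line reference to these same ingredients; you have simply spelled out the case analysis explicitly.
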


\begin{proof}
Parts (a) and (b) are consequence of Proposition \ref{Ba}, the remarks preceding it, and Corollary \ref{C5}. Part (c) is just a
restatement of Proposition \ref{Ba}.
\end{proof}

According to \cite[Theorem 7.2]{ZhuBn}, the space $BMO^p_{-\frac12}$ contains non-constant functions. In fact, any function
in $\mathcal{B}^{\alpha}$ with $0<\alpha<1/2$ is in $BMO^p_{-\frac12}$. However, this space differs from $\mathcal{B}^{1/2}$.
See Corollary \ref{C5}.

In the next theorem we record some characterizations obtained for Bloch type spaces, which are of some independent interest.

\begin{theorem}\label{bmo}
Suppose $r>0$, $1\le p<\infty$, $\sigma>-1$, and $f\in H(\Bn)$. Let $\gamma\ge -1$ if $n=1$; and $\gamma>-1/2$ if $n>1$.
Then the following statements are equivalent.
\begin{itemize}
\item[(i)] $f\in \mathcal{B}^{1+\gamma}$.
\item[(ii)] There is a constant $C>0$ such that
$$\frac{(1-|z|^2)^{\gamma p}}{v_{\sigma}(D(z,r))}\int_{D(z,r)}|f(w)-f(z)|^p\,dv_{\sigma}(w)\le C$$
for all $z\in\Bn$.
\item[(iii)] There is a constant $C>0$ such that
$$\frac{(1-|z|^2)^{\gamma p}}{v_{\sigma}(D(z,r))}\int_{D(z,r)}|f(w)-\widehat{f_{r}}(z)|^p\,dv_{\sigma}(w)\le C$$
for all $z\in\Bn$.
\item[(iv)] For each $z\in\Bn$ there exists a complex number $\lambda_ z$ such that
$$\sup_{z\in\Bn}\frac{(1-|z|^2)^{\gamma p}}{v_{\sigma}(D(z,r))}\int_{D(z,r)}|f(w)-\lambda_ z|^p\,dv_{\sigma}(w)<\infty.$$
\item[(v)] For some (or all) $\eta >\max(-1,-1+\gamma p)$ and for each $c>\max(0,-2\gamma p)$ we have
$$\sup_{z\in\Bn}\int_{\Bn}|f(w)-f(z)|^p\,\frac{(1-|z|^2)^{c+\gamma p}}{|1-\langle z,w\rangle|^{n+1+c+\eta}}\,dv_{\eta}(w)<\infty.$$
\end{itemize}
\end{theorem}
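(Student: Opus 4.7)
The plan is to reduce everything to results already established in the paper, observing that conditions (iii), (iv), (v) are simply the holomorphic specializations of parts of Theorem \ref{BMO-S}, while condition (i) plugs into Proposition \ref{Ba} via the known Bloch-space descriptions.

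First, I would handle the easiest block, namely (iii)$\Leftrightarrow$(iv)$\Leftrightarrow$(v). Condition (iii) is precisely the statement that $(1-|z|^{2})^{\gamma} MO_{p,r}(f)(z)$ is bounded, i.e., $f\in BMO^{p}_{r,\gamma}=BMO^{p}_{\gamma}$. Condition (iv) is exactly the reformulation of membership in $BMO^{p}_{\gamma}$ proved in Lemma \ref{Lem-1}. Condition (v) is condition (c) of Theorem \ref{BMO-S}. Since that theorem shows (a)$\Leftrightarrow$(c), we get (iii)$\Leftrightarrow$(iv)$\Leftrightarrow$(v), all equivalent to $f\in BMO^{p}_{\gamma}$.

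Next, I would identify (i) with $f\in H(\bn)\cap BMO^{p}_{\gamma}$. By Proposition \ref{Ba}, the latter is equivalent to $\sup_{z}(1-|z|^{2})^{\gamma}|\widetilde\nabla f(z)|<\infty$. In the range of $\gamma$ stated in the theorem, this quantity is comparable to $\|f\|_{\mathcal B^{1+\gamma}}$ (up to $|f(0)|$): when $n=1$ one uses $\widetilde\nabla f(z)=(1-|z|^{2})f'(z)$, valid for every $\gamma\ge-1$; when $n>1$ one uses the equivalent norm $|f(0)|+\sup(1-|z|^{2})^{\alpha-1}|\widetilde\nabla f(z)|$ on $\mathcal B^{\alpha}$ for $\alpha>1/2$, applied with $\alpha=1+\gamma$, which is exactly the hypothesis $\gamma>-1/2$. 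These facts are recorded in the discussion of Bloch and Lipschitz type spaces just before the theorem (referring to \cite[Chapter 7]{ZhuBn}). This gives (i)$\Leftrightarrow$ $f\in BMO^{p}_{\gamma}$.

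Finally, I would close the loop by showing (ii) is equivalent to the others, which is the only genuinely new step. The implication (ii)$\Rightarrow$(iv) is trivial: take $\lambda_{z}=f(z)$. For (iii)$\Rightarrow$(ii), I would invoke the same sub-mean-value inequality used in the proof of Proposition \ref{Ba} (Lemma 2.24 of \cite{ZhuBn}) applied to the holomorphic function $w\mapsto f(w)-\widehat{f_{r}}(z)$ at the point $z$, obtaining
\[
|f(z)-\widehat{f_{r}}(z)|\le C\,MO_{p,r}(f)(z).
\]
Combining with the $L^{p}(dv_{\sigma}/v_{\sigma}(D(z,r)))$ triangle inequality yields
\[
\left(\frac{1}{v_{\sigma}(D(z,r))}\int_{D(z,r)}|f(w)-f(z)|^{p}\,dv_{\sigma}(w)\right)^{\!1/p}\le (1+C)\,MO_{p,r}(f)(z),
\]
which after multiplying by $(1-|z|^{2})^{\gamma}$ gives (ii). The main obstacle, if any, is a bookkeeping one: verifying that the sub-mean-value estimate applies uniformly in $z$ with a constant depending only on $r$ (which it does, since it is a property of the holomorphic function, and the subtracted constant $\widehat{f_{r}}(z)$ does not affect holomorphy in $w$). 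Combining all three steps gives the five-way equivalence.
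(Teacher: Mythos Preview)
Your proposal is essentially correct and follows the same route as the paper: the paper's proof records that Proposition~\ref{Ba} already contains (i)$\Rightarrow$(v)$\Rightarrow$(ii)$\Rightarrow$(i), and then defers the remaining equivalences to Theorem~\ref{BMO-S} and Lemma~\ref{Lem-1}, exactly as you do.

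One small correction: condition (v) is \emph{not} condition (c) of Theorem~\ref{BMO-S} (which uses $\widehat{f_r}(z)$), but rather the special case $\lambda_z=f(z)$ of condition (d). This matters because Theorem~\ref{BMO-S} only gives you (iii)$\Leftrightarrow$(iv) and, via (d), the implication (v)$\Rightarrow$(iii); the reverse implication (iii)$\Rightarrow$(v) with the specific choice $\lambda_z=f(z)$ does not follow from Theorem~\ref{BMO-S} alone. It comes instead from the first half of the proof of Proposition~\ref{Ba}, which explicitly establishes (i)$\Rightarrow$(v). Since you already invoke Proposition~\ref{Ba} for the (i)$\Leftrightarrow$BMO step, the fix is simply to route the argument through (i)$\Rightarrow$(v)$\Rightarrow$(ii)$\Rightarrow$(iv)$\Leftrightarrow$(iii)$\Leftrightarrow$(i), which is precisely what the paper does. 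With that adjustment your argument is complete and matches the paper's.
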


\begin{proof}
In the proof of Proposition \ref{Ba}, we proved the implications (i)$\Rightarrow$(v)$\Rightarrow$(ii)$\Rightarrow$(i). The other
equivalences are obtained from Theorem \ref{BMO-S} and Lemma \ref{Lem-1}.
\end{proof}

When $\gamma=0$, the equivalences of (i)-(iv) in Theorem \ref{bmo} is just \cite[Theorem 5.22]{ZhuBn}, and the equivalence
with (v) appears in \cite{LW}.

\section{Bounded Hankel operators}

The main result of this section is the following result, which characterizes bounded Hankel operators induced by real-valued
symbols between weighted Bergman spaces.

\begin{theorem}\label{mt1}
Let $1<p\le q<\infty$, $\alpha>-1$, $\beta>-1$, $f\in L^q_{\beta}$, and
$$\gamma=\frac{n+1+\beta}{q}-\frac{n+1+\alpha}{p}.$$
Then $H_{f}^{\beta},H_{\bar{f}}^{\beta}:A^p_{\alpha}\rightarrow L^q_{\beta}$ are both bounded if and only if $f\in BMO^q_{\gamma}$.
\end{theorem}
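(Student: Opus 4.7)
The statement is an ``iff'', and I would prove the two directions separately after one preliminary reduction. Writing $f=u+iv$ with $u,v$ real-valued, one has $H^\beta_f=H^\beta_u+iH^\beta_v$ and $H^\beta_{\bar f}=H^\beta_u-iH^\beta_v$, so joint boundedness of $H^\beta_f$ and $H^\beta_{\bar f}$ is equivalent to separate boundedness of $H^\beta_u$ and $H^\beta_v$; since $BMO^q_\gamma$ is visibly closed under conjugation, it suffices to prove both implications for real-valued symbols, in which case $H^\beta_f=H^\beta_{\bar f}$.

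For the sufficiency, given real $f\in BMO^q_\gamma$, I would first invoke Theorem \ref{BMO-S} to split $f=f_1+f_2$ with $f_1\in BO_\gamma$ and $f_2\in BA^q_\gamma$. The piece $H^\beta_{f_2}$ is handled by Lemma \ref{CMf}, which identifies $BA^q_\gamma$ as precisely the symbols for which pointwise multiplication $M_{f_2}:A^p_\alpha\to L^q_\beta$ is bounded; since $P_\beta$ is bounded on $L^q_\beta$ for $1<q<\infty$, the factorization $H^\beta_{f_2}=(I-P_\beta)M_{f_2}$ gives the desired estimate. For $H^\beta_{f_1}$ I would exploit the reproducing identity $P_\beta g=g$ for $g\in A^p_\alpha$ to rewrite
$$H^\beta_{f_1}g(z)=\int_\Bn\bigl(f_1(z)-f_1(w)\bigr)\,g(w)\,\overline{K^\beta_z(w)}\,dv_\beta(w),$$
bound the difference $|f_1(z)-f_1(w)|$ via Lemma \ref{L-CBO}, and recognize the resulting integral operator as one of the types controlled by Lemma \ref{Z-T} and Proposition \ref{proj-ge1-beta}, with Schur-test balance $\lambda=\gamma$. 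The cases $\gamma\ge 0$ and $\gamma<0$ require separate treatment because Lemma \ref{L-CBO} supplies different majorants; in the $\gamma<0$ case the extra factor $|1-\langle z,w\rangle|^{-2\gamma}$ is absorbed into the kernel power, which is exactly why Proposition \ref{proj-ge1-beta} is stated with the $\beta(z,w)$ weight.

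For the necessity, fix a small $r>0$ and, for each $z\in\Bn$, introduce the test function
$$g_z(w)=\frac{(1-|z|^2)^b}{(1-\langle w,z\rangle)^{b+(n+1+\alpha)/p}}$$
with $b$ large. Lemma \ref{Ict} yields $\|g_z\|_{p,\alpha}\asymp 1$, while standard estimates give $|g_z(w)|\asymp(1-|z|^2)^{-(n+1+\alpha)/p}$ uniformly for $w\in D(z,r)$. The boundedness hypothesis then forces
$$\int_{D(z,r)}|f(w)g_z(w)-P_\beta(fg_z)(w)|^q\,dv_\beta(w)\lesssim 1.$$
Set $\Lambda_z(w):=P_\beta(fg_z)(w)/g_z(w)$, a holomorphic function on $D(z,r)$ (where $g_z$ does not vanish). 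Dividing through by $|g_z|^q$ on $D(z,r)$ and using $v_\beta(D(z,r))\asymp(1-|z|^2)^{n+1+\beta}$ yields
$$\frac{(1-|z|^2)^{\gamma q}}{v_\beta(D(z,r))}\int_{D(z,r)}|f(w)-\Lambda_z(w)|^q\,dv_\beta(w)\lesssim 1,$$
which is the estimate of Lemma \ref{Lem-1} except that the ``constant'' $\Lambda_z(w)$ still depends on $w$. Taking $\lambda_z:=\Lambda_z(z)$ and bounding $\sup_{w\in D(z,r)}|\Lambda_z(w)-\Lambda_z(z)|$ by a Cauchy-type estimate for the holomorphic $\Lambda_z$ on a slightly larger Bergman ball $D(z,2r)$---where the required $L^q_\beta$ bound is produced by rerunning the previous computation at the scale $2r$ and invoking the boundedness of $P_\beta$ on $L^q_\beta$---completes the argument via Lemma \ref{Lem-1}.

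The main obstacle is precisely this last extraction of a single constant $\lambda_z$ from the $w$-dependent ``error term'' $\Lambda_z=P_\beta(fg_z)/g_z$: in the Hilbert space case $p=q=2$, $\alpha=\beta$ the Berezin transform makes this automatic, but for $p\neq q$ and $\alpha\neq\beta$ the natural scales of $g_z$ and of $\Lambda_z$ are different, and one has to verify that the Cauchy estimate for $|\Lambda_z(w)-\Lambda_z(z)|$ is compatible with the scale $(1-|z|^2)^{-\gamma}$ dictated by $BMO^q_\gamma$. A secondary, mostly bookkeeping, difficulty on the sufficiency side is choosing $b$ and $c$ in Lemma \ref{Z-T} and Proposition \ref{proj-ge1-beta} so that the constraints $\alpha+1<p(b+1)$ and $c\le n+1+b+\lambda$ hold simultaneously with the geometric value $c=n+1+\beta$ naturally forced by the kernel $K^\beta_z$.
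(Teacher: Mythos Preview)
Your sufficiency argument matches the paper's: decompose via Theorem~\ref{BMO-S}, handle the $BA^q_\gamma$ piece by $(I-P_\beta)M_{f_2}$ and Lemma~\ref{CMf}, and the $BO_\gamma$ piece by the difference representation of $H^\beta_{f_1}g$ combined with Lemma~\ref{L-CBO} and the Schur-type bounds. One point you gloss over: in the case $\gamma<0$ the paper does not absorb $|1-\langle z,w\rangle|^{-2\gamma}$ directly into the kernel of $P_\beta$, because the resulting parameter $b=\beta+\gamma$ may fail the constraint $\alpha+1<p(b+1)$ of Lemma~\ref{Z-T}. Instead it first invokes Lemma~\ref{LHb1} to replace $H^\beta_f$ by $H^s_f$ with $s\ge\beta$ chosen so that $p(s+\gamma+1)>\alpha+1$; without this the Schur test is inapplicable, so it is more than bookkeeping.

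Your necessity argument, however, has a genuine gap at exactly the point you flag as the main obstacle. From boundedness of $H^\beta_f$ you correctly extract
\[
\frac{(1-|z|^2)^{\gamma q}}{v_\beta(D(z,2r))}\int_{D(z,2r)}|f(w)-\Lambda_z(w)|^q\,dv_\beta(w)\lesssim 1
\]
with $\Lambda_z=P_\beta(fg_z)/g_z$ holomorphic. But a Cauchy-type oscillation bound for $\Lambda_z$ on $D(z,r)$ requires control of $\int_{D(z,2r)}|\Lambda_z-c|^q\,dv_\beta$ for some \emph{constant} $c$; you only control $\int_{D(z,2r)}|\Lambda_z-f|^q\,dv_\beta$. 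Passing from the latter to the former via the triangle inequality reintroduces $\int_{D(z,2r)}|f-c|^q\,dv_\beta$, which is precisely the $BMO^q_\gamma$ quantity you are trying to bound, so the argument is circular. Invoking boundedness of $P_\beta$ on $L^q_\beta$ does not help either: the estimate $\|P_\beta(fg_z)\|_{q,\beta}\le\|P_\beta\|\,\|fg_z\|_{q,\beta}$ requires a uniform bound on $\|fg_z\|_{q,\beta}$, and none is available from $f\in L^q_\beta$ alone since $g_z$ is unbounded.

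The paper sidesteps this with a different device (Proposition~\ref{necessity}): for any holomorphic $g\in A^1_\beta$ one has the reproducing identity $\overline{g(z)}\,h^t_z=P_{\beta+t}(\bar g\,h^t_z)$, which converts the \emph{function} $\overline{g_z}$ into the \emph{constant} $\overline{g_z(z)}$ at the cost of the bounded projection $P_{\beta+t}$. Combining this with the triangle inequality and the definition $g_z=P_\beta(\bar f h^t_z)/h^t_z$ yields
\[
MO_{\beta,q,t}f(z)\;\lesssim\;\|H^\beta_f h^t_z\|_{q,\beta}+\|H^\beta_{\bar f}h^t_z\|_{q,\beta}
\]
directly, with no oscillation estimate on $\Lambda_z$ needed at all. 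Note that the second Hankel operator $H^\beta_{\bar f}$ enters precisely through the term $P_\beta(\bar f h^t_z)$; this algebraic use of \emph{both} Hankel operators is what replaces your proposed Cauchy estimate.
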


We are going to prove Theorem \ref{mt1} with a series of lemmas and propositions. For $t\ge 0$ let
$$K_ z^{\beta,t}(w)=\frac{1}{(1-\langle w,z\rangle)^{n+1+\beta+t}}.$$
Also, for $f\in L^q_{\beta}$ and $z\in \Bn$, we consider the function $MO_{\beta,q,t} f$ defined by
$$MO_{\beta,q,t} f (z)=\big \|fh^t_z-  \overline{g_ z(z)} h^t _z \big \|_{q,\beta},$$
where, for $z\in \Bn$, the function $g_ z$ (that depends on $f$ and $t$) is given by
$$g_ z(w)=\frac{P_{\beta}(\bar{f} h^t_ z)(w)}{h^t_ z(w)},\qquad w\in \Bn,$$
with the function $h^t_ z$ defined by
$$h^t_z(w)=h_ z^{\alpha,\beta,t}(w)=\frac{K_z^{\beta,t}(w)}{\|K_z^{\beta,t}\|_{p,\a}},\qquad w\in \Bn.$$
Clearly, $\|h^t_z\|_{p,\a}=1$. When $t=0$, it is easily seen that $\overline{g_ z(z)}=B_{\beta} f(z)$ is the Berezin transform of $f$ at
the point $z$.  Also, since $h^t_ z(w)$ never vanishes on $\Bn$, the function $g_ z$ is holomorphic on $\Bn$.

\begin{lemma}\label{R-MO}
Let $1<p\le q<\infty$ and $\alpha,\beta>-1$. Let $\gamma$ be as in Theorem \ref{mt1} and $t\ge 0$ such that
$$n+1+\beta+t>(n+1+\a)/p.$$
If $MO_{\beta,q,t} f\in L^{\infty}(\Bn)$, then $f\in BMO^q_{\gamma}$.
\end{lemma}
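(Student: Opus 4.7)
The plan is to reduce to the characterization of $BMO^q_\gamma$ given by Lemma \ref{Lem-1}, using $\lambda_z = \overline{g_z(z)}$ and taking the weight parameter there to be $\sigma = \beta$. Thus the goal is to show the pointwise estimate
$$\frac{(1-|z|^2)^{\gamma q}}{v_\beta(D(z,r))}\int_{D(z,r)}|f(w)-\overline{g_z(z)}|^q\,dv_\beta(w)\le C$$
for every $z\in\Bn$, for some fixed $r>0$.

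First I would compute $\|K_z^{\beta,t}\|_{p,\alpha}$: the assumption $n+1+\beta+t>(n+1+\alpha)/p$ guarantees that Lemma \ref{Ict} applies with $t'=\alpha$ and $s'=p(n+1+\beta+t)-(n+1+\alpha)>0$, yielding
$$\|K_z^{\beta,t}\|_{p,\alpha}\asymp (1-|z|^2)^{(n+1+\alpha)/p-(n+1+\beta+t)}.$$
Since $|1-\langle w,z\rangle|\asymp 1-|z|^2$ uniformly for $w\in D(z,r)$, this gives the pointwise lower bound
$$|h_z^t(w)|^q\gtrsim (1-|z|^2)^{-q(n+1+\alpha)/p},\qquad w\in D(z,r).$$

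Next I would feed this into the assumption $MO_{\beta,q,t}f(z)\le C$, which means
$$\int_{\Bn}|f(w)-\overline{g_z(z)}|^q\,|h_z^t(w)|^q\,dv_\beta(w)\le C^q.$$
Restricting the integration to $D(z,r)$ and using the lower bound for $|h_z^t|^q$ produces
$$\int_{D(z,r)}|f(w)-\overline{g_z(z)}|^q\,dv_\beta(w)\lesssim (1-|z|^2)^{q(n+1+\alpha)/p}.$$
Dividing by $v_\beta(D(z,r))\asymp (1-|z|^2)^{n+1+\beta}$ and observing that
$$\frac{q(n+1+\alpha)}{p}-(n+1+\beta)=-\gamma q,$$
one obtains precisely the estimate required by Lemma \ref{Lem-1}, and hence $f\in BMO^q_{r,\gamma}=BMO^q_\gamma$.

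There is essentially no obstacle: the only thing to check is that the constants work out, which happens exactly because the exponents in the definitions of $\gamma$ and of $h_z^t$ are chosen compatibly. The parameter $t$ plays no role beyond ensuring the integrability that defines $\|K_z^{\beta,t}\|_{p,\alpha}$, and the holomorphy of $g_z$ is not needed for this direction — only the constant value $\overline{g_z(z)}$ enters, serving as the auxiliary constant $\lambda_z$ in Lemma \ref{Lem-1}.
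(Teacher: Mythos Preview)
Your proof is correct and follows essentially the same approach as the paper: both compute $\|K_z^{\beta,t}\|_{p,\alpha}$ via Lemma~\ref{Ict}, restrict the integral defining $MO_{\beta,q,t}f(z)$ to the Bergman ball $D(z,r)$ where $|1-\langle w,z\rangle|\asymp 1-|z|^2$, and then invoke Lemma~\ref{Lem-1} with $\lambda_z=\overline{g_z(z)}$. The only cosmetic difference is that the paper first rewrites $[MO_{\beta,q,t}f(z)]^q$ as a weighted integral over all of $\Bn$ before restricting, whereas you restrict immediately; the exponent bookkeeping is identical.
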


\begin{proof}
Since $n+1+\beta+t>(n+1+\a)/p$, Lemma \ref{Ict} gives us the estimate
$$\|K_z^{\beta,t}\|_{p,\a}\asymp(1-|z|^2)^{(n+1+\a)/p-(n+1+\beta+t)}.$$
It follows that $\big [MO_{\beta,q,t} f (z)\big ]^q$ is comparable to
$$(1-|z|^2)^{\gamma q+(n+1+\beta)(q-1)+tq}\int_{\Bn} \frac{|f(w)-\overline{g_ z(z)}\,|^q}{|1-\langle z,w \rangle|^{(n+1+\beta+t)q}}
\,dv_{\beta}(w),$$
which dominates
$$\frac{(1-|z|^2)^{\gamma q}}{|D(z,r)|}\int_{D(z,r)}|f(w)-\overline{g_z(z)}|^q\,dv(w).$$
This shows that, if $MO_{\beta,q,t} f\in L^{\infty}(\Bn)$, the condition in Lemma \ref{Lem-1} is satisfied with
$\lambda_ z=\overline{g_ z(z)}$, so $f\in BMO^q_{\gamma}$.
\end{proof}


The following result gives the necessity in Theorem \ref{mt1}. It generalizes, in several directions, Proposition 8.19 in \cite{Zhu}, where the
method of proof is based on Hilbert space techniques. A different method was used in \cite{Zhu-Pac} to deal with the case $\alpha=\beta$
and $p=q$. Our method here is more flexible and allows us to obtain the result in much more generality.

\begin{proposition}\label{necessity}
Let $1<p,q<\infty$, $\alpha,\beta>-1$, and $t\ge 0$. Then for any $f\in L^q_{\beta}$ we have
$$
MO_{\beta,q,t}f(z)
\lesssim \big \|H^{\beta}_f h^t_z\big \|_{q,\beta}+ \big \|H^{\beta}_{\bar f} h^t_z\big \|_{q,\beta}.
$$
\end{proposition}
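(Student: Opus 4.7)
My plan rests on two parallel decompositions of $(f-\overline{g_z(z)})h^t_z$, one singling out $H^\beta_f h^t_z$ and the other $H^\beta_{\bar f}h^t_z$. From the very definition of the Hankel operator, $fh^t_z = H^\beta_f h^t_z + P_\beta(fh^t_z)$, giving
$$(f-\overline{g_z(z)})h^t_z = H^\beta_f h^t_z + \bigl[P_\beta(fh^t_z) - \overline{g_z(z)}h^t_z\bigr],$$
so by the triangle inequality $MO_{\beta,q,t}f(z) \leq \|H^\beta_f h^t_z\|_{q,\beta} + \|F_1\|_{q,\beta}$, where $F_1 := P_\beta((f - \overline{g_z(z)})h^t_z)$ is holomorphic. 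In parallel, the pointwise identity $|(f - \overline{g_z(z)})h^t_z| = |(\bar f - g_z(z))h^t_z|$ combined with the defining relation $P_\beta(\bar fh^t_z) = g_zh^t_z$ yields
$$(\bar f - g_z(z))h^t_z = H^\beta_{\bar f}h^t_z + (g_z - g_z(z))h^t_z,$$
so $MO_{\beta,q,t}f(z) \leq \|H^\beta_{\bar f}h^t_z\|_{q,\beta} + \|F_2\|_{q,\beta}$ with $F_2 := (g_z - g_z(z))h^t_z$ a holomorphic error vanishing at $w=z$.

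The core task is then to establish the cross-bounds $\|F_1\|_{q,\beta} \lesssim \|H^\beta_{\bar f}h^t_z\|_{q,\beta}$ and $\|F_2\|_{q,\beta} \lesssim \|H^\beta_f h^t_z\|_{q,\beta}$; either of them, combined with the corresponding decomposition, would complete the proof. For $F_2$ I would use its integral representation
$$F_2(w) = \int_{\bn}\bar f(u)\,h^t_z(u)\left[\frac{1}{(1-\langle w,u\rangle)^{n+1+\beta}} - \frac{h^t_z(w)}{h^t_z(z)(1-\langle z,u\rangle)^{n+1+\beta}}\right]dv_\beta(u),$$
obtained by writing out $P_\beta(\bar fh^t_z)(w) - g_z(z)h^t_z(w)$ via the Bergman kernel and substituting $g_z(z) = P_\beta(\bar fh^t_z)(z)/h^t_z(z)$. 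The bracketed kernel $M(w,u)$ vanishes on the diagonal $w=z$, and I would estimate it pointwise by a factor decaying in the Bergman distance $\beta(z,w)$. Combined with the refined integral bounds in Lemma~\ref{Ict-beta} and the weighted operator boundedness in Proposition~\ref{proj-ge1-beta}, a Schur-type argument should then yield the required bound involving $\|H^\beta_f h^t_z\|_{q,\beta}$; a symmetric analysis (with the roles of $f$ and $\bar f$ interchanged) controls $F_1$.

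The main obstacle is avoiding a circular estimate. A naive application of the triangle inequality together with the $L^q_\beta$-boundedness of the Bergman projection only gives $\|F_i\|_{q,\beta} \lesssim \|P_\beta\|\,MO_{\beta,q,t}f(z)$, which is useless when $\|P_\beta\|\ge 1$. The required additional smoothing must come genuinely from the diagonal vanishing built into the subtracted term $\overline{g_z(z)}h^t_z$ (respectively $g_z(z)h^t_z$): the kernel difference $M(w,u)$ enjoys an extra Bergman-distance factor which is precisely what the enhanced kernel estimate Lemma~\ref{Ict-beta} is designed to exploit. Making this cancellation rigorous, so that the resulting integral operator maps the Hankel remainder into $L^q_\beta$ \emph{without} reintroducing $MO_{\beta,q,t}f(z)$ on the right-hand side, is the technical heart of the argument.
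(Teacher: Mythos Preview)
Your setup is correct and matches the paper's first move: write $MO_{\beta,q,t}f(z)\le \|H^\beta_f h^t_z\|_{q,\beta}+\|F_1\|_{q,\beta}$ with $F_1=P_\beta(fh^t_z)-\overline{g_z(z)}\,h^t_z$, and observe that $|(f-\overline{g_z})h^t_z|=|H^\beta_{\bar f}h^t_z|$ since $g_z h^t_z=P_\beta(\bar f h^t_z)$ by definition. Your diagnosis of the circularity problem is also right.

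The proposed cure, however, does not work. The kernel $M(w,u)$ vanishes only at the single point $w=z$; this produces at best a factor comparable to $\beta(z,w)$, which \emph{grows}, not decays, as $w$ moves toward $\partial\Bn$, so it cannot improve an $L^q_\beta$-norm in $w$. Lemma~\ref{Ict-beta} and Proposition~\ref{proj-ge1-beta} show that such an extra logarithmic factor is \emph{harmless}, not that it is \emph{helpful}. More fundamentally, your integral for $F_2$ is against $\bar f(u)h^t_z(u)$, not against any Hankel remainder; no Schur-type estimate on $M(w,u)$ will turn an integral of $|\bar f||h^t_z|$ into $\|H^\beta_f h^t_z\|_{q,\beta}$. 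The cross-bounds $\|F_i\|_{q,\beta}\lesssim\|H^\beta_{\text{other}}h^t_z\|_{q,\beta}$ simply cannot be obtained this way.

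The paper's device is algebraic, not analytic. The crucial observation is the reproducing identity
\[
\overline{g(z)}\,h^t_z \;=\; P_{\beta+t}\bigl(\bar g\,h^t_z\bigr),\qquad g\in A^1_\beta,
\]
which holds because $h^t_z$ is, up to normalization, the reproducing kernel $K^{\beta+t}_z$ of $A^2_{\beta+t}$. Applying this with $g=g_z$, and using $P_{\beta+t}P_\beta=P_\beta$ on holomorphic functions, one rewrites
\[
F_1 \;=\; P_{\beta+t}\bigl(P_\beta(fh^t_z)-\overline{g_z}\,h^t_z\bigr).
\]
Since $P_{\beta+t}$ is bounded on $L^q_\beta$, it remains to bound $\|P_\beta(fh^t_z)-\overline{g_z}\,h^t_z\|_{q,\beta}$; adding and subtracting $fh^t_z$ gives exactly $\|H^\beta_f h^t_z\|_{q,\beta}+\|H^\beta_{\bar f}h^t_z\|_{q,\beta}$. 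The point is that passing from the \emph{constant} $\overline{g_z(z)}$ to the \emph{function} $\overline{g_z}$ is what breaks the loop, and this passage is effected by the shifted projection $P_{\beta+t}$, not by any cancellation in an integral kernel.
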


\begin{proof}
By the triangle inequality and the definition of Hankel operators, we have
\begin{eqnarray*}
MO_{\beta,q,t}f(z)&=&\big \|fh^t_ z-\overline{g_ z(z)}\, h^t_ z \big \|_{q,\beta}\\
&\le& \big \|fh^t_ z -P_{\beta}(f h^t_ z)\big \|_{q,\beta}+\big \|P_{\beta}(f h^t_ z)-\overline{g_ z(z)}\, h^t_ z\big \|_{q,\beta}\\
&=&\big \|H^{\beta}_ f h^t_ z\big \|_{q,\beta}+\big \|P_{\beta}(f h^t_ z)-\overline{g_ z(z)}\, h^t_ z\big \|_{q,\beta}.
\end{eqnarray*}
For any $g\in A^1_{\beta}(\Bn)$ it is easy to check that
\begin{equation}\label{Eq-H1}
\overline{g(z)} h^t_ z =P_{\beta+t}(\bar{g} h^t_ z).
\end{equation}
This together with the boundedness of $P_{\beta+t}$ on $L^q_{\beta}$ yields
\begin{eqnarray*}
\big\|P_{\beta}(fh^t_ z)-\overline{g_ z(z)}\,h^t_ z\big\|_{q,\beta}&=&\big\|P_{\beta}(fh^t_ z)-P_{\beta+t}(\overline{g_ z} \,h^t_ z)\big\|_{q,\beta}\\
&=&\big \|P_{\beta+t} \big (P_{\beta}(f h^t_ z)-\overline{g_ z} \,h^t_ z)\big )\big \|_{q,\beta}\\
&\le& \big \|P_{\beta+t}\big \|_{L^q_{\beta}} \cdot \big \|P_{\beta}(f h^t_ z)-\overline{g_ z} \,h^t_ z\big \|_{q,\beta}.
\end{eqnarray*}
Finally,
\begin{eqnarray*}
\big \|P_{\beta}(f h^t_ z)-\overline{g_ z} \,h^t_ z\big \|_{q,\beta}& \le &\big \|f h^t_ z -P_{\beta}(f h^t_ z)\big \|_{q,\beta}+
\big \|fh^t_ z -\overline{g_ z} \,h^t_ z\big \|_{q,\beta}\\
&=&\big \|H^{\beta}_ f h^t_ z\big \|_{q,\beta}+\big \|\bar{f}\,h^t_ z -g_ z \,h^t_ z\big \|_{q,\beta}\\
&=&\big \|H^{\beta}_ f h^t_ z\big \|_{q,\beta}+\big \|\bar{f}\,h^t_ z -P_{\beta}(\bar{f}\,h^t_ z)\big \|_{q,\beta}\\
&=&\big \|H^{\beta}_ f h^t_ z\big \|_{q,\beta}+\big \|H^{\beta}_{\bar{ f}} h^t_ z\big \|_{q,\beta}.
\end{eqnarray*}
This proves the result with constant $C=\big (1+\big \|P_{\beta+t}\big \|_{L^q_{\beta}}\big )$.
\end{proof}

Note that the proposition above does not require $p\le q$.
The next two propositions, which require the condition $p\le q$, will
establish the sufficiency of Theorem \ref{mt1}.

\begin{proposition}\label{L-BA}
Let $1<p\le q <\infty$, $\alpha>-1$, $\beta>-1$, and
$$\gamma=(n+1+\beta)/q-(n+1+\alpha)/p.$$
If  $f\in BA^q_{\gamma}$, then $H_ f^{\beta}:A^p_{\alpha}\rightarrow L^q_{\beta}$ is bounded.
\end{proposition}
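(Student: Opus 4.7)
The plan is to reduce the boundedness of $H_f^\beta$ to two simpler facts: (i) the multiplication operator $M_f : g \mapsto fg$ maps $A^p_\alpha$ boundedly into $L^q_\beta$, and (ii) the Bergman projection $P_\beta$ is bounded on $L^q_\beta$. Once both are in hand, writing $H_f^\beta g = fg - P_\beta(fg)$ and applying the triangle inequality finishes the proof.

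First I would invoke Lemma \ref{CMf}. The equivalence of (i) and (ii) in that lemma tells us that the hypothesis $f \in BA^q_\gamma$ is equivalent to the boundedness of the embedding $i : A^p_\alpha \to L^q(\Bn, |f|^q\,dv_\beta)$, which is nothing other than the estimate
$$\|fg\|_{q,\beta} \lesssim \|g\|_{p,\alpha}, \qquad g \in A^p_\alpha.$$
In other words, $M_f : A^p_\alpha \to L^q_\beta$ is bounded with norm controlled by $\|f\|_{BA^q_\gamma}$.

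Next I would record that for $1 < q < \infty$ and $\beta > -1$, the projection $P_\beta : L^q_\beta \to L^q_\beta$ is bounded. This is a standard fact, and it is also a direct consequence of Lemma \ref{Z-T} applied with $p = q$, $\alpha = \beta$, $b = \beta$, and $c = n+1+\beta$: the conditions $\alpha + 1 < p(b+1)$ and $c \le n+1+b+\lambda$ (with $\lambda = 0$) are trivially satisfied when $q > 1$.

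Combining the two ingredients is immediate: for any $g \in A^p_\alpha$,
$$\|H_f^\beta g\|_{q,\beta} = \|fg - P_\beta(fg)\|_{q,\beta} \le \bigl(1 + \|P_\beta\|_{L^q_\beta \to L^q_\beta}\bigr) \|fg\|_{q,\beta} \lesssim \|g\|_{p,\alpha}.$$
There is no real obstacle here; the substantive content has already been packaged into Lemma \ref{CMf}, whose proof is where the covering/lattice argument and the estimate in Lemma \ref{l2} were used to convert averages of $|f|^q$ into pointwise control of the generalized Berezin transform. The main conceptual point to emphasize is that $BA^q_\gamma$ is precisely the symbol class which makes $M_f$ a bounded multiplier from $A^p_\alpha$ into $L^q_\beta$ under the Bergman-index relation $\gamma = (n+1+\beta)/q - (n+1+\alpha)/p$.
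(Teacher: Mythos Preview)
Your proof is correct and essentially identical to the paper's own argument: both use the boundedness of $P_\beta$ on $L^q_\beta$ for $q>1$ together with Lemma~\ref{CMf} to bound $\|H^\beta_f g\|_{q,\beta}\lesssim\|fg\|_{q,\beta}=\|g\|_{L^q(d\mu_{f,\beta})}\lesssim\|g\|_{p,\alpha}$. There is nothing to add.
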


\begin{proof}
Since $q>1$, the Bergman projection $P_{\beta}$ is bounded on $L^q_{\beta}$. Thus
$$
\big \|H^{\beta}_ f g \big \|_{q,\beta} \le \|fg\|_{q,\beta} +\|P_{\beta}(fg)\| _{q,\beta}
\lesssim \|fg\|_{q,\beta}=\|g\|_{L^q(d\mu_{f,\beta})}.
$$
The result then follows from Lemma \ref{CMf}.
\end{proof}

We note that the proof of the previous proposition also works for $1=p<q<\infty$. In order to show that $H^{\beta}_ f$ is bounded if
$f\in BO_{\gamma}$ with $\gamma<0$, we need the following result.

\begin{lemma}\label{LHb1}
Let $s\ge \beta>-1$, $1<q<\infty$, $f\in L^q_{\beta}$, and $g\in H^{\infty}$. Then
$$\big \|H_ f^{\beta} g \big \|_{q,\beta} \le C \,\big \|H_ f^{s} g \big \|_{q,\beta}.$$
\end{lemma}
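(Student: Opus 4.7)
The plan is to establish the algebraic identity
\[
H_f^\beta g \;=\; (I-P_\beta)\,H_f^s g,
\]
from which the lemma follows immediately, since $P_\beta$ is bounded on $L^q_\beta$ for $1<q<\infty$ (this is the standard Forelli--Rudin case $s=\beta$). Given the identity, one just estimates
\[
\|H_f^\beta g\|_{q,\beta}\le \|H_f^s g\|_{q,\beta}+\|P_\beta\|_{L^q_\beta\to L^q_\beta}\,\|H_f^s g\|_{q,\beta}.
\]

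To prove the identity, I would expand the right-hand side using the definitions $H_f^\beta h=(I-P_\beta)(fh)$ and $H_f^s h=(I-P_s)(fh)$:
\[
(I-P_\beta)H_f^s g \;=\; fg-P_s(fg)-P_\beta(fg)+P_\beta P_s(fg).
\]
The crux is to verify that $P_\beta P_s(fg)=P_s(fg)$. This reduces to checking that $P_s(fg)$ belongs to $A^q_\beta$, for then the reproducing property of $P_\beta$ on $A^q_\beta$ collapses the last two terms and leaves $fg-P_\beta(fg)=H_f^\beta g$.

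To justify $P_s(fg)\in A^q_\beta$: since $g\in H^\infty$ and $f\in L^q_\beta$ we have $fg\in L^q_\beta$. The hypothesis $s\ge\beta>-1$ together with $q>1$ gives $\beta+1<q(s+1)$, which by the standard Bergman projection boundedness theorem ensures $P_s\colon L^q_\beta\to L^q_\beta$ is bounded. Thus $P_s(fg)\in L^q_\beta$, and since it is also holomorphic, $P_s(fg)\in A^q_\beta$, so indeed $P_\beta P_s(fg)=P_s(fg)$.

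There is no real obstacle here; the only subtlety is remembering that the projection onto the smaller-weight Bergman space does not reproduce an arbitrary holomorphic function, but it \emph{does} reproduce anything already in $A^q_\beta$, and the hypothesis $s\ge\beta$ is precisely what guarantees that $P_s(fg)$ lands in $A^q_\beta$. Once the identity $H_f^\beta g=(I-P_\beta)H_f^s g$ is in hand, the norm bound is immediate with constant $C=1+\|P_\beta\|_{L^q_\beta\to L^q_\beta}$.
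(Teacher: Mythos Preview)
Your proof is correct and follows essentially the same approach as the paper. Both arguments hinge on the identity $P_\beta P_s(fg)=P_s(fg)$ (obtained via boundedness of $P_s$ on $L^q_\beta$ when $s\ge\beta$) and arrive at the same constant $C=1+\|P_\beta\|_{L^q_\beta}$; the paper simply organizes the computation as a triangle-inequality split $(I-P_\beta)(fg)=(I-P_s)(fg)+(P_s-P_\beta)(fg)$ followed by $(P_s-P_\beta)(fg)=-P_\beta H_f^s g$, which is algebraically equivalent to your operator identity $H_f^\beta g=(I-P_\beta)H_f^s g$.
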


\begin{proof}
Since $g\in H^{\infty}$, we have $gf\in L^q_{\beta}$. Also,
\begin{eqnarray*}
\big\| H_ f^{\beta} g \big \|_{q,\beta}&=& \big \|(I-P_{\beta})(gf) \big \|_{q,\beta}\\
&\le& \big \|(I-P_{s})(gf) \big \|_{q,\beta}+\big \|(P_ s-P_{\beta})(gf) \big \|_{q,\beta}\\
& =&\big \|H_ f^{s} g \big \|_{q,\beta}+\big \|(P_ {\beta}-P_{s})(gf) \big \|_{q,\beta}.
\end{eqnarray*}
Since $P_ s$ is bounded on $L^q_{\beta}$, the reproducing formula yields $P_{\beta} P_ s (gf)=P_ s(gf)$. Thus
$$(P_ {\beta}-P_{s})(gf)=(P_ {\beta}-P_{\beta} P_{s})(gf)=P_{\beta}(I-P_ s)(gf)=P_{\beta} (H_ f^s g).$$
This gives
$$\big \|(P_ {\beta}-P_{s})(gf) \big \|_{q,\beta} \le \big \|P_{\beta} \big \| \cdot \big \|H_ f^{s} g \big \|_{q,\beta},$$
so we obtain the desired inequality with $C=1+ \|P_{\beta} \|$.
\end{proof}

\begin{proposition}\label{L-BO}
Let $1<p\le q <\infty$, $\alpha>-1$, $\beta>-1$, and
$$\gamma=(n+1+\beta)/q-(n+1+\alpha)/p.$$
If $f\in BO_{\gamma}$, then $H_ f^{\beta}:A^p_{\alpha}\rightarrow L^q_{\beta}$ is bounded.
\end{proposition}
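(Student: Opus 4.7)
The plan is to reduce the problem to a kernel integral estimate and invoke Proposition \ref{proj-ge1-beta}. Via Lemma \ref{LHb1}, I would first replace $H_f^\beta$ with $H_f^s$ for an auxiliary weight $s\ge\beta$ chosen sufficiently large, so that the integrability constraints appearing in Proposition \ref{proj-ge1-beta} can be arranged. For $g\in H^\infty$ (which is dense in $A^p_\alpha$), the reproducing formula for weight $s$ combined with the integral representation of $P_s$ yields
$$H_f^s g(z) = c_s \int_\Bn (f(z)-f(w))\,g(w)\,\frac{(1-|w|^2)^s}{(1-\langle z,w\rangle)^{n+1+s}}\,dv(w),$$
so that $|H_f^s g(z)|$ is dominated pointwise by a positive integral operator with $|f(z)-f(w)|$ in its kernel; the remaining task is to bound this positive operator from $L^p_\alpha$ to $L^q_\beta$.

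The next step is to apply Lemma \ref{L-CBO} to control $|f(z)-f(w)|$, splitting according to the sign of $\gamma$. When $\gamma\ge 0$, the elementary inequality $\min(a,b)^{-\gamma}\le a^{-\gamma}+b^{-\gamma}$ together with Lemma \ref{L-CBO} gives
$$|f(z)-f(w)| \lesssim (\beta(z,w)+1)\bigl[(1-|z|^2)^{-\gamma}+(1-|w|^2)^{-\gamma}\bigr].$$
Substituting, the contribution of $(1-|w|^2)^{-\gamma}$ produces the operator $T_{s-\gamma,\,n+1+s}|g|$, which Proposition \ref{proj-ge1-beta} maps $L^p_\alpha\to L^q_\beta$ for $s$ large, with the balance condition $c\le n+1+b+\lambda$ holding as an equality. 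The contribution from $(1-|z|^2)^{-\gamma}$ factors outside the integral, so its $L^q_\beta$ norm equals the $L^q_{\beta-q\gamma}$ norm of $T_{s,n+1+s}|g|$, and Proposition \ref{proj-ge1-beta} applies once more with the shifted range weight. When $\gamma<0$, Lemma \ref{L-CBO} instead yields
$$|f(z)-f(w)| \lesssim (\beta(z,w)+1)\,|1-\langle z,w\rangle|^{-2\gamma}(1-|w|^2)^\gamma,$$
which collapses the representation into the single operator $T_{s+\gamma,\,n+1+s+2\gamma}|g|$, again handled directly by Proposition \ref{proj-ge1-beta} for $s$ large (the balance condition is once more an equality).

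The main obstacle is ensuring $\beta-q\gamma>-1$ in the $\gamma\ge 0$ case, so that the shifted space $L^q_{\beta-q\gamma}$ is well-defined. Unwinding the definition of $\gamma$, this inequality simplifies to $np<q(n+1+\alpha)$, which follows from $np\le qn<q(n+1+\alpha)$ by using $p\le q$ and $\alpha>-1$. This is precisely the place where the hypothesis $p\le q$ enters the argument. A standard density argument then extends the estimate from $H^\infty$ to all of $A^p_\alpha$.
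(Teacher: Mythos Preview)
Your proposal is correct and follows the same overall strategy as the paper: express $H_f^s g$ as an integral with kernel involving $f(z)-f(w)$, bound $|f(z)-f(w)|$ via Lemma~\ref{L-CBO}, and reduce to the boundedness of positive integral operators handled by Proposition~\ref{proj-ge1-beta} (equivalently Lemma~\ref{Z-T}). The two proofs differ only in tactics. First, you invoke Lemma~\ref{LHb1} uniformly for both signs of $\gamma$, whereas the paper introduces the auxiliary weight $s$ only when $\gamma<0$; in the $\gamma\ge 0$ case the paper instead checks directly that the conditions of Lemma~\ref{Z-T} hold with $b=\beta$. Second, for $\gamma\ge 0$ you split additively via $\min(a,b)^{-\gamma}\le a^{-\gamma}+b^{-\gamma}$, while the paper splits the inner integral over the regions $\{|w|\le|z|\}$ and $\{|w|>|z|\}$. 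Your version is slightly more streamlined, at the cost of relying on Lemma~\ref{LHb1} in a case where the paper shows it is not needed. One small point: the factor $(\beta(z,w)+1)$ produces a sum $T_{b,c}+S_{b,c}$ rather than $T_{b,c}$ alone, but since Proposition~\ref{proj-ge1-beta} and Lemma~\ref{Z-T} have identical hypotheses this causes no trouble.
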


\begin{proof}
We first consider the case $\gamma \ge 0$. For  $g\in H^{\infty}$, which is dense in  $A^p_{\alpha}$, we have
\begin{eqnarray*}
\|H_ f ^{\beta} g\|^q_{q,\beta}& =&\int_{\Bn} |H_ f^{\beta} g(z)|^q\, dv_{\beta}(z)\\
&=&\int_{\Bn} \left |\int_{\Bn}\frac{(f(z)-f(w))\,g(w)}{(1-\langle z,w \rangle )^{n+1+\beta}} \,dv_{\beta}(w)\right |^q\, dv_{\beta}(z)\\
&\le& \int_{\Bn} \left (\int_{\Bn}\frac{|f(z)-f(w)|\,|g(w)|}{|1-\langle z,w \rangle |^{n+1+\beta}} \,dv_{\beta}(w)\right )^q\, dv_{\beta}(z).
\end{eqnarray*}
By Lemma \ref{L-CBO},
$$\|H_ f ^{\beta} g\|^q_{q,\beta}\lesssim \int_{\Bn} \left (\int_{\Bn}\frac{(1+\beta(z,w))\,|g(w)| \,dv_{\beta}(w)}{|1-\langle z,w
\rangle |^{n+1+\beta}\min (1-|z|,1-|w|)^{\gamma}} \right )^q\, dv_{\beta}(z).$$
Write
$$I_ 1(g)=\int_{\Bn} \left (\int_{\Bn}\frac{|g(w)|\,dv_{\beta}(w)}{|1-\langle z,w \rangle |^{n+1+\beta}\min (1-|z|,1-|w|)^{\gamma}}\right )^q
\, dv_{\beta}(z),$$
and split the inner integral in two parts, $I_{1,1}(g)$ over $|w|\le |z|$ and $I_{1,2}(g)$ over $|w|>|z|$. Since
$$\min (1-|z|,1-|w|)^{\gamma}= (1-|z|)^{\gamma}$$
for $|w|\le |z|$, we have
\begin{eqnarray*}
I_ {1,1}(g)&\lesssim&\int_{\Bn} \left (\int_{\Bn}\frac{|g(w)|\,dv_{\beta}(w)}{|1-\langle z,w \rangle |^{n+1+\beta}}\right )^q
\, dv_{\beta-q\gamma}(z)\\
&=&\int_{\Bn} \big |S_{b,c}(|g|)(z)\big |^q \, dv_{\beta-q\gamma}(z),
\end{eqnarray*}
where $S_{b,c}$ is the integral operator appearing in Theorem \ref{Z-T} with $b=\beta$ and $c=n+1+\beta$. Notice that
$\beta-q\gamma>-1$ is equivalent to
$$n\left(\frac1q-\frac1p\right)<\frac{1+\alpha}{p},$$
which is automatically satisfied since $p\le q$. Applying Theorem \ref{Z-T}, we obtain $I_{1,1}(g) \lesssim \|g\|_{p,\alpha}^q$,
provided $1+\a<p(1+b)$ and $c\le n+1+b+\lambda$. Since $b=\beta$ and
$$\lambda=\frac{n+1+(\beta-q\gamma)}{q}-\frac{n+1+\alpha}{p}=0,$$
the condition $c\le n+1+b+\lambda$ is satisfied with equality. It remains to check that the condition
\begin{equation}\label{Cab}
1+\a<p(1+b)=p(1+\beta)
\end{equation}
is satisfied. Since $\gamma \ge 0$ and $q\ge p$, we have
$$0\le \gamma =(n+1+\beta)/q-(n+1+\alpha)/p \le (\beta-\alpha)/p.$$
This gives $\alpha \le \beta$, so\eqref{Cab} holds since $p> 1$.

Similarly, we have
$$I_ {1,2}(g)\lesssim \int_{\Bn} \big |S_{b,c}(|g|)(z)\big |^q \, dv_{\beta}(z)$$
with $b=\beta-\gamma$, $c=n+1+\beta$, and $\lambda=\gamma$. We want to apply Theorem \ref{Z-T} to estimate $I_{1,2}(g)$.
In this case, the condition $c\le n+1+b+\lambda$ in Theorem~\ref{Z-T} holds with equality.
The other condition in Theorem \ref{Z-T} is $\alpha+1<p(1+\beta-\gamma)$, which is equivalent to
$$\frac{p(n+1+\beta)}{q}<p\beta+p+n.$$
If $q'$ is the conjugate exponent of $q$, the above condition is equivalent to
$$n\left(\frac1q-\frac1p\right)<\frac{1+\beta}{q'},$$
which is automatically satisfied since $p\le q$. Hence, by Theorem \ref{Z-T}, we have $I_{1,2}(g)\lesssim \|g\|^q_{p,\alpha}$. This
together with the previous estimate yields $I_ 1(g) \lesssim \|g\|_{p,\alpha}^q$. The remaining estimate
$I_ 2(g) \lesssim \|g\|_{p,\alpha}^q$ with
$$I_ 2(g):=\int_{\Bn} \left (\int_{\Bn}\frac{|g(w)|\,\beta(z,w)\,dv_{\beta}(w)}{|1-\langle z,w \rangle |^{n+1+\beta}
\min (1-|z|,1-|w|)^{\gamma}}\right )^q\, dv_{\beta}(z),$$
can be proved in a similar manner, using Proposition \ref{proj-ge1-beta} instead of Theorem \ref{Z-T}. The proof of the case
$\gamma\ge 0$ is now complete.

If $\gamma<0$ and $g\in H^{\infty}$, we use Lemma \ref{LHb1}, with $s\ge \beta$ big enough so that $p(s+\gamma+1)>\alpha+1$,
to obtain
$$\big \|H_ f ^{\beta} g \big \|^q_{q,\beta}\lesssim \big \|H_ f ^{s} g\big \|^q_{q,\beta}\le \int_{\Bn} \left[\int_{\Bn}\frac{|f(z)-f(w)|\,|g(w)|}
{|1-\langle z,w \rangle |^{n+1+s}} \,dv_{s}(w)\right]^q\, dv_{\beta}(z).$$
By Lemma \ref{L-CBO},
$$|f(z)-f(w)|\le C \frac{\beta(z,w)+1}{ (1-|w|)^{-\gamma}}\,|1-\langle z,w \rangle |^{-2\gamma}.$$
Therefore,
$$\big \|H_ f ^{\beta} g \big \|^q_{q,\beta}\lesssim \int_{\Bn} \left (\int_{\Bn}\frac{(\beta(z,w)+1)\,|g(w)|}{|1-\langle z,w
\rangle |^{n+1+s+2\gamma}} \,dv_{s+\gamma}(w)\right )^q\, dv_{\beta}(z),$$
and the boundedness of $H_f^\beta:A^p_\alpha\to L^q_\beta$ follows from Theorem \ref{Z-T} and
Proposition \ref{proj-ge1-beta} again.
\end{proof}

The proof of Theorem \ref{mt1} is now complete: the necessity of the condition $f\in BMO^q_{\gamma}$ follows from Lemma \ref{R-MO}
and Proposition \ref{necessity}. Since $f\in BMO^q_{\gamma}$ if and only if $\overline{f}\in BMO^q_{\gamma}$, the sufficiency is a
consequence of Theorem \ref{BMO-S}, Proposition \ref{L-BA}, and Proposition \ref{L-BO}.

As an immediate consequence of Theorem \ref{mt1} and Proposition \ref{Ba}, we obtain the following result that characterizes the
boundedness of Hankel operators with conjugate holomorphic symbols.

\begin{coro}
Let $f\in A^1_{\beta}$, $1<p\le q<\infty$, $\alpha>-1$, $\beta>-1$, and
$$\gamma=\frac{n+1+\beta}{q}-\frac{n+1+\alpha}{p}.$$
\begin{enumerate}
\item For $n=1$ we have
\begin{enumerate}
\item[(a)] If $\gamma \ge -1$, then $H_{\bar{f}}:A^p_{\alpha}\rightarrow L^q_{\beta}$ is bounded if and only if $f\in \mathcal{B}^{1+\gamma}$.
\item[(b)] If $\gamma < -1$, then $H_{\bar{f}}:A^p_{\alpha}\rightarrow L^q_{\beta}$ is bounded if and only if $f$ is constant.
\end{enumerate}
\item For $n>1$ we have
\begin{enumerate}
\item[(a)] If $\gamma > -1/2$, then $H_{\bar{f}}:A^p_{\alpha}\rightarrow L^q_{\beta}$ is bounded if and only if $f\in \mathcal{B}^{1+\gamma}$.
\item[(b)] If $\gamma < -1/2$, then $H_{\bar{f}}:A^p_{\alpha}\rightarrow L^q_{\beta}$ is bounded if and only if $f$ is constant.
\item[(c)] If $\gamma = -1/2$, then $H_{\bar{f}}:A^p_{\alpha}\rightarrow L^q_{\beta}$ is bounded if and only if
$$\sup_{z\in \Bn} (1-|z|^2)^{-1/2} |\widetilde{\nabla }f(z)|<\infty.$$
\end{enumerate}
\end{enumerate}
\end{coro}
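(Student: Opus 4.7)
My plan is to deduce the statement as a direct consequence of Theorem \ref{mt1} combined with the classification of $H(\Bn) \cap BMO^q_\gamma$ established in the corollary that follows Proposition \ref{Ba}. The crucial observation is that for a holomorphic symbol $f$ the Hankel operator $H_f^\beta$ automatically vanishes on a dense subspace of $A^p_\alpha$: for any $g \in H^\infty \subset A^p_\alpha$ the product $fg$ is holomorphic and lies in $A^1_\beta$, so the reproducing property of the Bergman projection forces $P_\beta(fg) = fg$ and hence $H_f^\beta g = 0$. Density of $H^\infty$ in $A^p_\alpha$ then extends $H_f^\beta$ to the zero operator, which is trivially bounded.

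With this reduction in hand, ``$H_{\bar f}^\beta : A^p_\alpha \to L^q_\beta$ is bounded'' is equivalent to the simultaneous boundedness of $H_f^\beta$ and $H_{\bar f}^\beta$, which by Theorem \ref{mt1} amounts to $f \in BMO^q_\gamma$. Intersecting with $H(\Bn)$ and invoking the explicit description of $H(\Bn) \cap BMO^q_\gamma$, one reads off that this space equals $\mathcal{B}^{1+\gamma}$ in the generic range ($\gamma \ge -1$ for $n=1$; $\gamma > -1/2$ for $n > 1$), reduces to the constants in the subcritical range ($\gamma < -1$ for $n=1$; $\gamma < -1/2$ for $n > 1$), and, in the remaining boundary case $n > 1$ with $\gamma = -1/2$, is characterized by $\sup_{z \in \Bn}(1-|z|^2)^{-1/2}|\widetilde\nabla f(z)| < \infty$ via Proposition \ref{Ba} (together with Corollary \ref{C5}, which prevents the space from collapsing onto $\mathcal{B}^{1/2}$). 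Specializing each of these three situations yields parts (1), (2)(a)--(b), and (2)(c) of the statement, respectively.

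The one technical subtlety to confirm is that the hypothesis $f \in L^q_\beta$ of Theorem \ref{mt1} is in force. On the sufficient side this is immediate: standard growth estimates for Bloch-type functions give $|f(z)| \lesssim (1-|z|)^{-\gamma}$ when $\gamma > 0$ (logarithmic growth when $\gamma = 0$, boundedness when $\gamma < 0$), and the inequality $p \le q$ together with $\alpha, \beta > -1$ readily implies $\beta - q\gamma > -1$, so that $\mathcal{B}^{1+\gamma} \subset L^q_\beta$. On the necessary side, testing $H_{\bar f}^\beta$ against the normalized kernels $k_z^\alpha \in A^p_\alpha$ as in Proposition \ref{necessity} controls $MO_{\beta,q,t} f$, which via Lemma \ref{R-MO} places $f$ in $BMO^q_\gamma$ without requiring any further a priori global integrability beyond the standing hypothesis $f \in A^1_\beta$. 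I expect this integrability bookkeeping to be the only mild obstacle; all the substantive work has already been carried out in Theorem \ref{mt1} and the BMO classification, so the proof is essentially a specialization exercise.
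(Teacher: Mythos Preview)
Your approach is essentially the paper's: reduce to Theorem~\ref{mt1} via the observation that $H_f^\beta\equiv0$ for holomorphic $f$, then read off the answer from the classification of $H(\Bn)\cap BMO^q_\gamma$. The one place where the paper is cleaner is the $L^q_\beta$ bookkeeping on the necessary side: rather than invoking Proposition~\ref{necessity} and Lemma~\ref{R-MO} (whose statements do assume $f\in L^q_\beta$, so your ``without requiring any further a priori global integrability'' is not quite justified as written), the paper simply tests $H_{\bar f}^\beta$ on the constant function $1$, obtaining $H_{\bar f}^\beta 1=\bar f-\overline{f(0)}\in L^q_\beta$ and hence $f\in A^q_\beta$; this puts you squarely inside the hypotheses of Theorem~\ref{mt1}.
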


\begin{proof}
Since $f\in A^1_{\beta}$, the Hankel operator $H_{\bar{f}}$ is densely defined. If $H_{\bar{f}}:A^p_{\alpha}\rightarrow L^q_{\beta}$ is bounded,
by testing the boundedness on the function 1 we see that $f\in A^q_{\beta}$. Since $\mathcal{B}^{1+\gamma}\subset A^q_{\beta}$, the
result follows from Theorem \ref{mt1} and Proposition \ref{Ba}.
\end{proof}

In the case $q=p=2$, this recovers the results of Janson and Wallst\'{e}n \cite{J,Wall},  where the case  $\gamma=-1$ for $n=1$ and the
case $\gamma=-1/2$ for $n>1$ were left open. Thus we have resolved these open cases.

\section{Weighted VMO spaces}

Let $\gamma\in\mathbb{R}$. For any positive radius $r$ and every exponent $p$ with $1\le p<\infty$, the space $VMO^p_{r,\gamma}$
consists of those  functions $f$ in $BMO^p_{r,\gamma}$ such that
$$\lim_{|z|\to1}(1-|z|^2)^{\gamma} MO_{p,r}(f)(z)=0.$$
Again, the space $VMO^p_{r,\gamma}$ is actually independent of the weight parameter $\sigma$. Similarly as before, for $r>0$, we
define $VO_{r,\gamma}$ as the space of functions $f$ in $BO_{r,\gamma}$ satisfying
$$\lim_{|z|\to 1} (1-|z|^2)^{\gamma}\omega_r(f)(z)=0,$$
and $VA^p_{r,\gamma}$ as the space of functions $f$ in $BA^p_{r,\gamma}$ satisfying
$$\lim_{|z|\to 1} (1-|z|^2)^{\gamma}\left[\widehat{|f|^p_ r}(z)\right]^{1/p}=0.$$
The following result shows that $VA^p_{r,\gamma}$ does not depend on $r$.

\begin{lemma}\label{VA}
Let $0<p\le q<\infty$, $\alpha>-1$, $\beta>-1$, $f\in L^q_{loc}(\Bn)$, and
$$\gamma=(n+1+\beta)/q-(n+1+\alpha)/p,\qquad d\mu_{f,\beta}=|f|^q dv_{\beta}.$$
The following are equivalent:
\begin{itemize}
\item[(i)] If $\{f_k\}$ is a bounded sequence in $A^p_{\alpha}$ and $f_k\to0$ uniformly on every compact subset of $\Bn$, then
$$\lim_{k\to\infty}\int_{\Bn}|f_k(z)|^q\,d\mu_{f,\beta}(z)=0.$$
\item [(ii)] $f\in VA^q_{r,\gamma}$ for some (or all) $r>0$.
\item[(iii)] The condition
$$\lim_{|z|\to1}(1-|z|^2)^{\gamma q} B_{c,\sigma}(|f|^q)(z)=0$$
holds for all $\sigma>\max(-1,-1+\gamma q)$ and all $c>\max(0,-\gamma q)$.
\end{itemize}
\end{lemma}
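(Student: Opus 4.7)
The plan is to mirror the proof of Lemma~\ref{CMf} while tracking decay at the boundary. I would establish (iii) $\Leftrightarrow$ (ii) via the pointwise estimates already appearing in Lemma~\ref{CMf}, together with (ii) $\Rightarrow$ (i) (by a Carleson-type splitting) and (i) $\Rightarrow$ (ii) (by testing against normalized reproducing-kernel-type functions). The $r$-independence of (ii) then falls out of the cycle, since (iii) is manifestly $r$-free.

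The implication (iii) $\Rightarrow$ (ii) is immediate from the inequality $B_{c,\sigma}(|f|^q)(z)\gtrsim \widehat{|f|^q_r}(z)$ established in the proof of Lemma~\ref{CMf}. For the reverse, I would repeat the $r$-lattice estimate from that lemma to get
$$(1-|z|^2)^{\gamma q} B_{c,\sigma}(|f|^q)(z)\lesssim (1-|z|^2)^{c+\gamma q}\sum_{j}\frac{(1-|a_j|^2)^{n+1+\sigma-\gamma q}\,\varepsilon_j}{|1-\langle z,a_j\rangle|^{n+1+c+\sigma}},$$
where $\varepsilon_j=(1-|a_j|^2)^{\gamma q}\widehat{|f|^q_r}(a_j)$. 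Given $\eta>0$, use (ii) to pick $R<1$ so that $\varepsilon_j<\eta$ for $|a_j|>R$, and split the sum at $|a_j|=R$. The tail is bounded by $C\eta$ via Lemma~\ref{l2}, which applies because $\sigma>-1+\gamma q$ yields $n+1+\sigma-\gamma q>n$ and $c>\max(0,-\gamma q)$ yields $c+\gamma q>0$. The head is a finite sum times $(1-|z|^2)^{c+\gamma q}$, which tends to $0$ since $c+\gamma q>0$ and $|1-\langle z,a_j\rangle|\ge 1-R$ when $|a_j|\le R$. I expect this splitting to be the most delicate part of the argument, as it crucially uses $c+\gamma q>0$ twice.

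For (ii) $\Rightarrow$ (i), given $\{f_k\}$ bounded in $A^p_\alpha$ with $f_k\to 0$ uniformly on compacta, I would split $\Bn=E_R\cup(\Bn\setminus E_R)$ with $E_R=\{|z|\le R\}$. Local $L^q$-integrability of $f$ makes $\mu_{f,\beta}(E_R)$ finite, so $\int_{E_R}|f_k|^q\,d\mu_{f,\beta}\to 0$ by uniform convergence. Letting $\mu_R$ denote the restriction of $\mu_{f,\beta}$ to $\Bn\setminus E_R$, one has $\mu_R(D(a,r))=0$ whenever $|a|$ is sufficiently small relative to $R$, and $\mu_R(D(a,r))\le\mu_{f,\beta}(D(a,r))$ otherwise, so (ii) forces $\sup_a \mu_R(D(a,r))/(1-|a|^2)^{(n+1+\alpha)q/p}\to 0$ as $R\to 1$. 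The bounded direction of Lemma~\ref{CMf} applied to $\mu_R$ then yields $\int |f_k|^q\,d\mu_R \le o_R(1)\,\|f_k\|_{p,\alpha}^q$, and sending $k\to\infty$ before $R\to 1$ finishes the argument.

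Finally, for (i) $\Rightarrow$ (ii) I would test (i) against
$$F_z(w)=\frac{(1-|z|^2)^a}{(1-\langle w,z\rangle)^{a+(n+1+\alpha)/p}},\qquad a>0 \text{ fixed},$$
which by Lemma~\ref{Ict} is uniformly bounded in $A^p_\alpha$ and tends to $0$ uniformly on compact subsets of $\Bn$ as $|z|\to 1$, thanks to the factor $(1-|z|^2)^a$ and the lower bound on $|1-\langle w,z\rangle|$ on compacts. Restricting the integral over $\Bn$ to $D(z,r)$, where $|1-\langle w,z\rangle|\asymp 1-|z|^2$, gives
$$\int_{\Bn}|F_z|^q\,d\mu_{f,\beta}\gtrsim (1-|z|^2)^{\gamma q}\widehat{|f|^q_r}(z),$$
so (i) forces $(1-|z|^2)^{\gamma q}\widehat{|f|^q_r}(z)\to 0$ along every sequence $|z|\to 1$, which is (ii) for the chosen $r$.
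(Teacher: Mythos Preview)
Your argument is correct, and the logical structure (cycle (iii)$\Rightarrow$(ii)$\Rightarrow$(i)$\Rightarrow$(ii)$\Rightarrow$(iii)) is sound. It differs from the paper's proof in two places.

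For the equivalence of (i) and (ii), the paper simply invokes the little-oh version of \cite[Theorem~50]{ZZ} (vanishing Carleson measures for $A^p_\alpha$), whereas you give a self-contained argument: testing on the kernel functions $F_z$ for (i)$\Rightarrow$(ii), and a Carleson splitting with the restricted measure $\mu_R$ for (ii)$\Rightarrow$(i). Your route avoids the external citation but relies implicitly on the quantitative form of the Carleson embedding (the operator norm controlled by the Carleson constant); this is standard, but you should state it rather than just invoke ``the bounded direction of Lemma~\ref{CMf}'', which as written is only qualitative.

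For (ii)$\Rightarrow$(iii), the paper takes a more indirect path: it first proves the vanishing statement for $B_{c,\sigma}(\widehat{|f|^q_r})$ by splitting the \emph{integral} at $|w|=s$, and then transfers the conclusion to $B_{c,\sigma}(|f|^q)$ via the pointwise inequality $B_{c,\sigma}(\mu_{f,\sigma})\lesssim B_{c,\sigma}(\widehat{\mu_{f,\sigma}})$ from \cite[Lemma~52]{ZZ}. Your approach instead reuses the lattice-sum estimate already set up in Lemma~\ref{CMf} and splits the \emph{sum} at $|a_j|=R$, applying Lemma~\ref{l2} to the tail and using $c+\gamma q>0$ for the finite head. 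This is more economical within the paper's own framework and sidesteps the extra reference; the paper's version has the minor advantage that the integral splitting is slightly more transparent than verifying the hypotheses of Lemma~\ref{l2}.
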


\begin{proof}
By the corresponding little-oh result of Theorem 50 in \cite{ZZ}, we know that (i) is equivalent to
$$\lim_{|z|\to1}\frac{\mu_{f,\beta}(D(z,r))}{(1-|z|^2)^{(n+1+\alpha)q/p}}=0$$
for some (or all) $r>0$.
The equivalence of (i) and (ii) is a consequence of this result and the fact that
$$\widehat{|f|^q_ r}(z)\asymp \frac{\mu_ {f,\beta} (D(z,r))}{(1-|z|^2)^{n+1+\beta}}.$$

That (iii) implies (ii) follows from the fact that
$$\widehat{|f|^q_ r}(z)\lesssim B_{c,\sigma}(|f|^q)(z),$$
which has been shown in the proof of Lemma~\ref{CMf}.

It remains to prove that (ii) implies (iii). Let $f\in VA^q_{r,\gamma}$. By definition, we have
$$(1-|z|^2)^{\gamma q} B_{c,\sigma}(\widehat{|f|^q_ r})(z)=(1-|z|^2)^{c+\gamma q}
\int_{\Bn}\frac{\widehat{|f|^q_ r}(w)\,dv_{\sigma}(w)}{|1-\langle z,w \rangle |^{n+1+c+\sigma}}.$$
For $0<s<1$ let
$$I_1(s)=(1-|z|^2)^{c+\gamma q}\int_{|w|\le s}\frac{\widehat{|f|^q_ r}(w)\,dv_{\sigma}(w)}{|1-\langle z,w \rangle |^{n+1+c+\sigma}},$$
and
$$I_2(s)=(1-|z|^2)^{c+\gamma q}\int_{s<|w|<1}\frac{\widehat{|f|^q_ r}(w)\,dv_{\sigma}(w)}{|1-\langle z,w \rangle |^{n+1+c+\sigma}}.$$
Let $\varepsilon$ be an arbitrary positive number. By (ii), and then by Lemma~\ref{Ict}, there exists an $s>0$ such that
$$I_2(s)\lesssim \,\varepsilon \, (1-|z|^2)^{c+\gamma q}\int_{s<|w|<1}\frac{(1-|w|^2)^{\sigma-\gamma q}
\,dv(w)}{|1-\langle z,w \rangle |^{n+1+c+\sigma}}\lesssim \, \varepsilon.$$
Since $f\in VA^q_{r,\gamma}\subset BA^q_{r,\gamma}$, we know that
$$\widehat{|f|^q_ r}(w)\lesssim (1-|w|^2)^{-q\gamma}.$$
Since $|1-\langle z,w \rangle| \gtrsim (1-|w|^2)$, we obtain
\begin{eqnarray*}
I_1(s)&\le& (1-|z|^2)^{c+\gamma q}\int_{|w|\le s}\frac{dv_\sigma(w)}{(1-|w|^2)^{n+1+c+\sigma+\gamma q}}\\
&\lesssim& \frac{(1-|z|^2)^{c+\gamma q}}{(1-s^2)^{n+1+c+\gamma q}}.
\end{eqnarray*}
Hence, we can find a $\delta\in(0,1)$ such that $I_1(s)\lesssim \,\varepsilon$ whenever $1-\delta<|z|<1$.
Combining the above two inequalities for $I_1(s)$ and $I_2(s)$ we deduce for $1-\delta<|z|<1$ that
$$(1-|z|^2)^{\gamma q} B_{c,\sigma}(\widehat{|f|^q_ r})(z)\lesssim \, \varepsilon.$$
Therefore,
$$\lim_{|z|\to1}(1-|z|^2)^{\gamma q} B_{c,\sigma}(\widehat{|f|^q_ r})(z)=0.$$
Let $d\mu_{f,\sigma}=|f|^q\,dv_{\sigma}$. Since
\begin{equation}\label{ave}
\widehat{|f|^q_r}(z)\asymp\widehat{\mu_{f,\sigma}}(z):=\frac{\mu_{f,\sigma}(D(z,r))}{(1-|z|^2)^{n+1+\sigma}},
\end{equation}
the above equation is equivalent to
$$\lim_{|z|\to1}(1-|z|^2)^{\gamma q} B_{c,\sigma}(\widehat{\mu_{f,\sigma}})(z)=0.$$
By \cite[Lemma 52]{ZZ}, we have
$$B_{c,\sigma}(\mu_{f,\sigma})(z)\lesssim B_{c,\sigma}(\widehat{\mu_{f,\sigma}})(z),$$
where
$$B_{c,\sigma}(\mu_{f,\sigma})(z)=(1-|z|^2)^{c}\int_{\Bn}\frac{d\mu_{f,\sigma}(w)}{|1-\langle z,w \rangle |^{n+1+c+\sigma}}.$$
Thus we obtain
$$\lim_{|z|\to1}(1-|z|^2)^{\gamma q} B_{c,\sigma}(\mu_{f,\sigma})(z)=0,$$
which is the same as
$$\lim_{|z|\to1}(1-|z|^2)^{\gamma q} B_{c,\sigma}(|f|^q)(z)=0.$$
This proves (ii) implies (iii) and completes the proof of the lemma.
\end{proof}

The next result shows that $VO_{r,\gamma}$ does not depend on $r$.

\begin{lemma}\label{VO}
Let $\gamma \in \mathbb{R}$ and $r_1, r_2>0$. If $f\in VO_{r_1,\gamma}$, then $f\in VO_{r_2,\gamma}$.
\end{lemma}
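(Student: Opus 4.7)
The plan is to mimic the chain-of-geodesic-points argument already used in Lemma \ref{L-CBO}, which was what established that $BO_{r,\gamma}$ is independent of $r$, and then observe that the argument delivers a \emph{localized} estimate strong enough to transfer the vanishing-at-the-boundary condition as well.

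The case $r_2\le r_1$ is immediate, because $D(z,r_2)\subset D(z,r_1)$ forces $\omega_{r_2}(f)(z)\le\omega_{r_1}(f)(z)$. So assume $r_2>r_1$, set $N=[r_2/r_1]+1$, and fix $z\in\Bn$, $w\in D(z,r_2)$. Letting $\lambda(t)$, $0\le t\le 1$, be the Bergman geodesic from $z$ to $w$ and $z_i=\lambda(i/N)$, one has $\beta(z_{i-1},z_i)=\beta(z,w)/N\le r_1$, so
$$|f(z)-f(w)|\le \sum_{i=1}^{N}|f(z_{i-1})-f(z_i)|\le \sum_{i=1}^{N}\omega_{r_1}(f)(z_{i-1}).$$
Since every $z_i$ lies on a geodesic from $z$ to $w$ and $\beta(z,w)<r_2$, each $z_i\in D(z,r_2)$, and therefore
$$\omega_{r_2}(f)(z)\le N\cdot \sup_{\zeta\in D(z,r_2)}\omega_{r_1}(f)(\zeta).$$

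Next I would use the standard fact $(1-|\zeta|^2)\asymp(1-|z|^2)$ for $\zeta\in D(z,r_2)$ (with constants depending only on $r_2$) to get
$$(1-|z|^2)^{\gamma}\,\omega_{r_2}(f)(z)\;\lesssim\; \sup_{\zeta\in D(z,r_2)} (1-|\zeta|^2)^{\gamma}\,\omega_{r_1}(f)(\zeta).$$
Given $\varepsilon>0$, the hypothesis $f\in VO_{r_1,\gamma}$ yields some $\rho<1$ with $(1-|\zeta|^2)^{\gamma}\omega_{r_1}(f)(\zeta)<\varepsilon$ for $|\zeta|>\rho$. The comparability $1-|\zeta|^2\le C_{r_2}(1-|z|^2)$ implies that for $|z|$ sufficiently close to $1$ every $\zeta\in D(z,r_2)$ satisfies $|\zeta|>\rho$. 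Consequently $(1-|z|^2)^{\gamma}\omega_{r_2}(f)(z)\to 0$ as $|z|\to 1^-$, which is exactly $f\in VO_{r_2,\gamma}$.

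No step looks genuinely hard; the only thing to be careful about is the uniformity of the two comparabilities ($1-|\zeta|^2\asymp 1-|z|^2$ on $D(z,r_2)$, and the resulting implication that $|\zeta|\to 1$ uniformly for $\zeta\in D(z,r_2)$ as $|z|\to 1$), but these are precisely the same ingredients used in Lemma \ref{L-CBO} and are part of the standard toolkit for the Bergman metric on $\Bn$, cf.\ \cite{ZhuBn}.
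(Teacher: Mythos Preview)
Your proposal is correct and follows essentially the same geodesic-chain argument as the paper's proof. The only cosmetic difference is that the paper first selects a point $w\in\overline{D(z,r_2)}$ realizing the supremum $\omega_{r_2}(f)(z)$ (using continuity of $f$) and then runs the chain once for that single $w$, whereas you bound $|f(z)-f(w)|$ for arbitrary $w\in D(z,r_2)$ and pass to the supremum afterwards; both routes rely on the same comparability $(1-|\zeta|^2)\asymp(1-|z|^2)$ on $D(z,r_2)$ to transfer the vanishing condition.
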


\begin{proof}
If $r_1>r_2$, the result is obvious. So we assume that $r_1<r_2$ and fix $z\in\Bn$. It follows from the continuity of $f$ on $\Bn$ that
$$\omega_{r_2}(f)(z)=\sup\{|f(z)-f(\zeta)|,\, \zeta\in\overline{D(z,r_2)}\},$$
and we can find $w\in\overline{D(z,r_2)}$ such that
$$|f(z)-f(w)|=\omega_{r_2}(f)(z).$$
Let $\lambda=\lambda(t)$, $0\le t\le 1$, be the geodesic in the Bergman metric from $z$ to $w$.
Then $\lambda$ lies entirely in $\overline{D(z,r_2)}$. As in the proof of Lemma~\ref{L-CBO},
we let $N=[r_2/r_1]+2$ and $t_i=i/N$, $0\le i\le N$, where $[x]$ denotes the largest integer less than or equal to $x$.
Set $z_i=\lambda(t_i)$, $0\le i\le N$. Since $N\ge r_2/r_1+1>r_2/r_1$, we have
$$\beta(z_{i-1},z_{i})=\frac{\beta(z,w)}{N}\le \frac{r_2}{N}<r_1.$$
Because $z_i$ is in the closure of $D(z,r_2)$, there exists a constant $K>0$, independent of $i$, such that
$$\frac1{K}(1-|z|^2)^{\gamma}\le (1-|z_i|^2)^{\gamma}\le K(1-|z|^2)^{\gamma}.$$
Since $f\in VO_{r_1,\gamma}$, we know that
$$\lim_{|z_i|\to1}(1-|z_i|^2)^{\gamma}\omega_{r_1}(f)(z_i)=0.$$
But $|z_i|\to1$ as $|z|\to1$. So for any $\eps>0$ there exists $\delta>0$ such that
$$(1-|z_i|^2)^{\gamma}\omega_{r_1}(f)(z_i)<\frac{\eps}{NK}$$
whenever $1-|z|<\delta$. Thus
\begin{eqnarray*}
|f(z)-f(w)|&\le&\sum_{i=1}^{N}|f(z_{i-1})-f(z_{i})|\le\sum_{i=1}^{N}\omega_{r_1}(f)(z_i)\\
&\le& \frac{\eps}{K(1-|z_i|^2)^{\gamma}}\le\frac{\eps}{K}\cdot\frac{K}{(1-|z|^2)^{\gamma}}\\
&=&\frac{\eps}{(1-|z|^2)^{\gamma}}.
\end{eqnarray*}
Therefore,
$$\omega_{r_2}(f)(z)=|f(z)-f(w)|\le\frac{\eps}{(1-|z|^2)^{\gamma}}$$
for $1-\delta<|z|<1$, which shows that
$$\lim_{|z|\to1}(1-|z|^2)^{\gamma}\omega_{r_2}(f)(z)=0,$$
or $f\in VO_{r_2,\gamma}$.
\end{proof}

Because of Lemmas~\ref{VA} and \ref{VO}, we can denote $VA^p_{r,\gamma}$ and $VO_{r,\gamma}$ by $VA^p_{\gamma}$
and $VO_{\gamma}$, respectively. Just as in the big-oh case, we have the following result for $VMO^p_{r,\gamma}$.

\begin{theorem}\label{VMO-S}
Suppose $r>0$, $\gamma \in \mathbb{R}$, $1\le p<\infty$, and $f\in BMO^p_{\gamma}$.
The following conditions are equivalent:
\begin{itemize}
\item[$(a)$] $f\in VMO^p_{r,\gamma}$.
\item[$(b)$] $f=f_ 1+f_ 2$ with $f_1\in VO_{\gamma}$ and $f_ 2 \in VA^p_{\gamma}$.
\item[$(c)$] For some (or all) $\sigma >\max(-1,-1+\gamma p)$ and for each $c\ge\max(n+1+\sigma, n+1+\sigma-2\gamma)$,
we have
$$\lim_{|z|\to1} \int_{\Bn} |f(w)-\widehat{f_ r}(z)|^p\frac{(1-|z|^2)^{c+\gamma p}}{|1-\langle z,w \rangle |^{n+1+c+\sigma}}
\,dv_{\sigma}(w)=0.$$
\item[$(d)$] For some ( or all) $\sigma >\max(-1,-1+\gamma p)$ and for each $c\ge\max(n+1+\sigma, n+1+\sigma-2\gamma)$,
there is a function $\lambda_ z$ such that
$$\lim_{|z|\to1}  \int_{\Bn} |f(w)-\lambda_ z|^p\frac{(1-|z|^2)^{c+\gamma p}}{|1-\langle z,w \rangle |^{n+1+c+\sigma}} dv_{\sigma}(w)=0.$$
\item[$(e)$] For some (or all) $\sigma >-1$ there is a function $\lambda_ z$ such that
$$\lim_{|z|\to1}\frac{(1-|z|^2)^{\gamma p}}{v_{\sigma}(D(z,r))}\int_{D(z,r)} |f(w)-\lambda_ z|^p\,dv_{\sigma}(w)=0.$$
\end{itemize}
\end{theorem}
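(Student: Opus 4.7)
The plan is to mirror the proof of Theorem \ref{BMO-S}, establishing the chain of implications
(c) $\Rightarrow$ (d) $\Rightarrow$ (e) $\Rightarrow$ (a) $\Rightarrow$ (b) $\Rightarrow$ (c), but inserting the $\lim_{|z|\to1}$ everywhere in place of the $\sup_{z\in\Bn}$ and keeping track of the stricter parameter conditions required for little-oh statements.

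The implications (c) $\Rightarrow$ (d) and (d) $\Rightarrow$ (e) are essentially the same as in the big-oh case: for (d) $\Rightarrow$ (e), I would restrict the integral in (d) to the Bergman ball $D(z,r)$ and use $|1-\langle z,w\rangle|\asymp (1-|z|^2)$ there, together with $v_\sigma(D(z,r))\asymp(1-|z|^2)^{n+1+\sigma}$. For (e) $\Rightarrow$ (a), I would mimic Lemma \ref{Lem-1}: apply the triangle inequality to bound $MO_{p,r}(f)(z)$ by the expression in (e) plus $|\widehat{f_r}(z)-\lambda_z|$, where the latter is controlled by H\"older and the integral in (e), and then let $|z|\to1$. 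The implication (a) $\Rightarrow$ (b) is proved exactly as in Theorem \ref{BMO-S}: set $f_1=\widehat{f_r}$ and $f_2=f-\widehat{f_r}$. The estimates from the proof of Theorem \ref{BMO-S} already show that $(1-|z|^2)^\gamma|\widehat{f_r}(z)-\widehat{f_r}(w)|$ is controlled by $(1-|z|^2)^\gamma MO_{p,2r}(f)$ over points $w\in D(z,r)$, and this tends to zero as $|z|\to1$ by (a), so $\widehat{f_r}\in VO_\gamma$; the same kind of triangle inequality estimate shows $(1-|z|^2)^\gamma[\widehat{|g|^p_r}(z)]^{1/p}\to0$ for $g=f-\widehat{f_r}$, i.e., $g\in VA^p_\gamma$.

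The most delicate direction is (b) $\Rightarrow$ (c). For the $VA^p_\gamma$ piece $f_2$, Lemma \ref{Ict} gives that the integral in (c) for $f_2$ is dominated by $(1-|z|^2)^{\gamma p}(B_{c,\sigma}(|f_2|^p)(z)+|\widehat{(f_2)_r}(z)|^p)$, both of which tend to zero by Lemma \ref{VA}. For the $VO_\gamma$ piece $f_1$, I would use Lemma \ref{L-CBO} to bound $|f_1(w)-\widehat{(f_1)_r}(z)|$ by $(1+\beta(z,w))$ times the appropriate rational factor in $1-|z|$, $1-|w|$, and (when $\gamma<0$) $|1-\langle z,w\rangle|^{-2\gamma}$, which produces the extra factor $|1-\langle z,w\rangle|^{-2\gamma}$ that explains the stricter requirement $c\ge n+1+\sigma-2\gamma$. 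I would then split the integral into a fixed inner region $\{|w|\le s\}$ and an outer region $\{|w|>s\}$, as in the proof of Lemma \ref{VA}: on the outer region the $VO_\gamma$ smallness of $(1-|w|^2)^\gamma\omega_r(f_1)(w)$ (transferred to a pointwise smallness of the relevant difference quotient) makes the integral small, while Lemmas \ref{Ict} and \ref{Ict-beta} ensure convergence of the kernel integrals; on the inner region the factor $(1-|z|^2)^{c+\gamma p}$ forces the integral to vanish as $|z|\to1$, thanks to the hypothesis $c\ge n+1+\sigma$ which guarantees $c+\gamma p>0$.

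The main obstacle will be keeping book on the parameter conditions in (b) $\Rightarrow$ (c): unlike the big-oh case where $c>\max(0,-2\gamma p)$ suffices, here we need $c\ge\max(n+1+\sigma, n+1+\sigma-2\gamma)$ in order that both the $(1-|z|^2)^{c+\gamma p}$ factor genuinely kills the inner-region contribution, and the extra $|1-\langle z,w\rangle|^{-2\gamma}$ arising from Lemma \ref{L-CBO} when $\gamma<0$ can be absorbed into a convergent application of Lemmas \ref{Ict} and \ref{Ict-beta}. Verifying that these exponent constraints propagate correctly through the inner/outer split, and that the $\varepsilon$-argument (choose $s$ close to $1$ so that the outer tail is small, then choose $|z|$ close to $1$ so the inner contribution is small) works uniformly in $\sigma$ and $c$ within the prescribed range, is the technical core of the proof.
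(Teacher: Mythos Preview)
Your chain (c) $\Rightarrow$ (d) $\Rightarrow$ (e) $\Rightarrow$ (a) $\Rightarrow$ (b) matches the paper, and your treatment of the $VA^p_\gamma$ piece in (b) $\Rightarrow$ (c) via Lemma~\ref{VA} is fine. The gap is in your handling of the $VO_\gamma$ piece.

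Your inner/outer split in $|w|$ does not do what you claim. On the outer region $\{|w|>s\}$, the smallness of $(1-|w|^2)^\gamma\omega_r(f_1)(w)$ does \emph{not} transfer to smallness of $|f_1(w)-\widehat{(f_1)_r}(z)|$: that difference is controlled only through the geodesic argument of Lemma~\ref{L-CBO}, and the geodesic from $z$ to $w$ can pass through interior points $z_i$ where the oscillation $\omega_r(f_1)(z_i)$ is \emph{not} small (take, e.g., $z$ and $w$ near the boundary but Bergman-far apart; the geodesic dips toward the center). So you cannot extract an $\varepsilon$ there the way Lemma~\ref{VA} does for $VA$. On the inner region $\{|w|\le s\}$, your claim that $c\ge n+1+\sigma$ forces $c+\gamma p>0$ is false when $\gamma<0$: for instance $n=1$, $\gamma=-1$, $p=10$, $\sigma=0$, $c=4$ satisfies all the hypotheses but $c+\gamma p=-6$. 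Since $|\widehat{(f_1)_r}(z)|$ can grow (logarithmically, via the $BO_\gamma$ bound) as $|z|\to1$, the inner contribution need not vanish.

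The paper avoids both problems by a different device: after the change of variables $w=\varphi_z(\zeta)$, the integral becomes
\[
\int_{\Bn}\bigl|f\circ\varphi_z(\zeta)-\widehat{f_r}(z)\bigr|^p\,(1-|z|^2)^{\gamma p}\,|1-\langle z,\zeta\rangle|^{c-(n+1+\sigma)}\,dv_\sigma(\zeta).
\]
For each fixed $\zeta$ one has $\beta(\varphi_z(\zeta),z)=\beta(\zeta,0)$, a constant, so $\varphi_z(\zeta)\to\partial\Bn$ as $|z|\to1$ and the $VO_\gamma$ condition (together with Lemma~\ref{VO}) gives pointwise convergence of the integrand to $0$. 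The hypothesis $c\ge n+1+\sigma$ is used here to make $|1-\langle z,\zeta\rangle|^{c-(n+1+\sigma)}$ bounded, and the $BO_\gamma$ estimates from Lemma~\ref{L-CBO} furnish an $L^1(dv_\sigma)$ majorant, so dominated convergence applies. This is the idea you are missing.
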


\begin{proof}
That (c) implies (d) is obvious. That (d) implies (e) follows from the simple inequality
\begin{eqnarray*}
&&\frac{(1-|z|^2)^{\gamma p}}{v_{\sigma}(D(z,r))}\int_{D(z,r)} \!\! |f(w)-\lambda_ z|^p\,dv_{\sigma}(w)\\
&\lesssim&\int_{\Bn} \! |f(w)-\lambda_ z|^p\frac{(1-|z|^2)^{c+\gamma p}}{|1-\langle z,w \rangle |^{n+1+c+\sigma}} dv_{\sigma}(w).
\end{eqnarray*}
An easy modification of the proof of Lemma~\ref{Lem-1} shows that (e) implies (a). That (a) implies (b) follows easily from the
proof of (a) implying (b) in Theorem~\ref{BMO-S}.

Thus we only need to prove that (b) implies (c). Suppose that (b) holds. As in the proof of Theorem \ref{BMO-S}, from
Lemma \ref{VA} it is not difficult to see that (c) is satisfied for $f\in VA^p_{\gamma}$. Now, for $f\in VO_{\gamma}$, it is obvious
that $f\in BO_{\gamma}$. Set
$$I(z)=\int_{\Bn} |f(w)-\widehat{f_ r}(z)|^p\frac{(1-|z|^2)^{c+\gamma p}}{|1-\langle z,w \rangle |^{n+1+c+\sigma}} dv_{\sigma}(w).$$
Making the change of variables $w=\varphi_z(\zeta)$, we obtain
\begin{equation}\label{EqIZ0}
I(z)=\int_{\Bn}|f\circ\varphi_z(\zeta)-\widehat{f_ r}(z)|^p\frac{(1-|z|^2)^{\gamma p}}{|1-\langle z,\zeta \rangle |^{n+1+\sigma-c}}
\,dv_{\sigma}(\zeta).
\end{equation}
In the case $\gamma\ge 0$, it follows from the proof of Theorem~\ref{BMO-S} and the invariance of the Bergman metric that
\begin{eqnarray*}
|f\circ\varphi_z(\zeta)-\widehat{f_ r}(z)|&\lesssim& \|f\|_{BO_{\gamma}} \,\frac{\beta(\varphi_z(\zeta), z)+1}{\min (1-|z|,
1-|\varphi_z(\zeta)|)^{\gamma}}\\
&\lesssim& \|f\|_{BO_{\gamma}} \,\frac{\beta(\zeta,0)+1}{\min (1-|z|,1-|\varphi_z(\zeta)|)^{\gamma}}.
\end{eqnarray*}
Let
$$
E=\{\zeta\in\Bn:\, |\varphi_z(\zeta)|\le|z|\}.
$$
For $\zeta\in E$ we have $1-|\varphi_z(\zeta)|^2\ge 1-|z|^2$ and
\begin{equation}\label{I1}
|f\circ\varphi_z(\zeta)-\widehat{f_ r}(z)|^p(1-|z|^2)^{\gamma p}\lesssim (\beta(\zeta,0)+1)^p.
\end{equation}
For $\zeta\in\Bn\setminus E$ we have $1-|\varphi_z(\zeta)|^2\le 1-|z|^2$ and
\begin{eqnarray*}
|f\circ\varphi_z(\zeta)-\widehat{f_ r}(z)|^p(1-|z|^2)^{\gamma p}
&\lesssim& \frac{(\beta(\zeta,0)+1)^p(1-|z|^2)^{\gamma p}}{(1-|\varphi_z(\zeta)|^2)^{\gamma p}}\\
&\lesssim&  (\beta(\zeta,0)+1)^p(1-|\zeta|^2)^{-\gamma p}.
\end{eqnarray*}
Since $\sigma>-1+\gamma\, p\ge -1$, we have
$$\int_{\Bn}(\beta(\zeta,0)+1)^p\,dv_{\sigma}(\zeta)<\infty,$$
and
$$\int_{\Bn}(\beta(\zeta,0)+1)^p(1-|\zeta|^2)^{-\gamma p}\,dv_{\sigma}(\zeta)<\infty.$$
Let
$$H(\zeta)=\begin{cases}(\beta(\zeta,0)+1)^p,& \zeta\in E\\
(\beta(\zeta,0)+1)^p(1-|\zeta|^2)^{-\gamma p}),& \zeta\in \Bn\setminus E
\end{cases}$$
The above argument shows that $H(\zeta)$ is  in $L^1(\Bn,dv_{\sigma})$ and, since $c\ge n+1+\sigma$, we have
\begin{equation}\label{EqIZ1}
|f\circ\varphi_z(\zeta)-\widehat{f_ r}(z)|^p(1-|z|^2)^{\gamma p}\,|1-\langle z,\zeta\rangle |^{c-(n+1+\sigma)}\lesssim
H(\zeta)
\end{equation}
for $\gamma\ge0$ and all $z\in \Bn$.

If $\gamma <0$, it follows from the proof of Theorem~\ref{BMO-S} again that
\begin{eqnarray*}
|f\circ\varphi_z(\zeta)-\widehat{f_ r}(z)|&\lesssim& \|f\|_{BO_{\gamma}} \,\frac{\big (\beta(\varphi_z(\zeta), z)+1\big )}
{ (1-|z|^2)^{-\gamma}}\big | 1-\langle z,\varphi_ z(\zeta)\rangle |^{-2\gamma}\\
&=& \|f\|_{BO_{\gamma}} \,\frac{(\beta(\zeta,0)+1)(1-|\zeta|^2)^{\gamma}}{(1-|\varphi_ z(\zeta)|^2)^{\gamma}}\\
&=&\|f\|_{BO_{\gamma}} \,\frac{(\beta(\zeta,0)+1)\,|1-\langle z,\zeta\rangle|^{2\gamma}}{(1-|z|^2)^{\gamma}}.
\end{eqnarray*}
Since $c\ge n+1+\sigma-2\gamma p$, we also have
\begin{equation}\label{EqIZ2}
|f\circ\varphi_z(\zeta)-\widehat{f_ r}(z)|^p(1-|z|^2)^{\gamma p}\,|1-\langle z,\zeta\rangle |^{c-(n+1+\sigma)}\lesssim G(\zeta)
\end{equation}
for all $z\in \Bn$, where $G(\zeta)=(\beta(\zeta,0)+1)^p$ is in $L^1(\Bn,dv_{\alpha})$.

Fix any $\zeta\in \Bn$ and let $t=\beta(\zeta,0)$. Since $\beta(\varphi_z(\zeta),z)=\beta(\zeta,0)=t$ and $f\in VO_{\gamma}$,
we get
\begin{equation}\label{L0-Eq1}
\lim_{|z|\to1}|f\circ\varphi_z(\zeta)-f(z)|^p(1-|z|^2)^{\gamma p}\le \lim_{|z|\to1}(1-|z|^2)^{\gamma p}\omega_t(f)(z)^p=0.
\end{equation}
On the other hand, we have
\begin{eqnarray*}
|f\circ\varphi_z(\zeta)&-&\widehat{f_ r}(z)|^p\lesssim |f\circ\varphi_z(\zeta)-f(z)|^p+|f(z)-\widehat{f_ r}(z)|^p \\
&\lesssim& |f\circ\varphi_z(\zeta)-f(z)|^p+\\
&&+\ \frac1{v_{\sigma}(D(z,r))^p}\int_{D(z,r)}|f(z)-f(t)|^p\,dv_{\sigma}(t)\\
&\lesssim& |f\circ\varphi_z(\zeta)-f(z)|^p+\omega_r(f)(z)^p.
\end{eqnarray*}
Therefore,
\begin{eqnarray*}
&&|f\circ\varphi_z(\zeta)-\widehat{f_ r}(z)|^p(1-|z|^2)^{\gamma p}\\
&\lesssim& |f\circ\varphi_z(\zeta)-f(z)|^p(1-|z|^2)^{\gamma p}+\omega_r(f)(z)^p(1-|z|^2)^{\gamma p},
\end{eqnarray*}
which tends to zero as $|z|\to 1$, because $f\in VO_{\gamma}$ and \eqref{L0-Eq1}. Since $c\ge n+1+\sigma$, we also have
$$\lim_{|z|\rightarrow 1^{-}}|f\circ\varphi_z(\zeta)-\widehat{f_ r}(z)|^p(1-|z|^2)^{\gamma p}\,|1-\langle z,
\zeta\rangle |^{c-(n+1+\sigma)}=0.$$
Thus in all cases, due to \eqref{EqIZ1} and \eqref{EqIZ2}, we can apply Lebesgue's Dominated Convergence Theorem (bearing
in mind the expression for $I(z)$ given in \eqref{EqIZ0}) to obtain $I(z)\to0$ as $|z|\to1^-$, which is (c). This completes the
proof of the theorem.
\end{proof}

Since condition (b) in the theorem above is independent of $r$, we see that the space $VMO^p_{\gamma,r}$ is actually
independent of $r$. Thus we will simply use the notation $VMO^p_\gamma$.

Notice that in (c) and (d) of Theorem~\ref{VMO-S} we require a somehow stronger condition $c\ge n+1+\sigma$ than $c>0$ in
Theorem~\ref{BMO-S}. It would be nice to know whether it is possible to replace condition $c\ge n+1+\sigma$ by $c>0$ with
$c>-\gamma p$ in (c) and (d) here. Anyway, for our main purpose here (to characterize compactness of Hankel operators)
condition $c\ge n+1+\sigma$ is enough.

For $\alpha>0$ let $\mathcal{B}^{\alpha}_ 0=\mathcal{B}^{\alpha}_ 0(\Bn)$ denote the closure of the set of polynomials in
$\mathcal{B}^{\alpha}$. The space $\mathcal{B}^{\alpha}_ 0$ consists exactly of those holomorphic functions $f$ such that
$$\lim_{|z|\to1^-}(1-|z|^2)^{\alpha} \,|\nabla f(z)|=0.$$
As before, the complex gradient can be replaced by the radial derivative $Rf$. Furthermore, for $\alpha>1/2$, a function $f$ is in
$\mathcal{B}^{\alpha}_ 0$ if and only if the function $(1-|z|^2)^{\alpha-1} \,|\widetilde{\nabla} f(z)|$ belongs to ${\mathbb C}_ 0(\Bn)$.
Again, we refer to \cite[Chapter 7]{ZhuBn} for all these facts. With minor modifications in the proof of Proposition \ref{Ba} together
with Corollary \ref{C5} we obtain the following result.

\begin{proposition}\label{Ba-0}
Let $\gamma \in \mathbb{R}$ and $1\le p<\infty$. Then $H(\Bn)\cap VMO^p_{\gamma}=\mathcal{B}^{1+\gamma}_{0}$
for $n=1$ and $\gamma >-1$, or for $n>1$ and $\gamma>-1/2$. In all other cases, the space $H(\Bn)\cap VMO^p_\gamma$
consists of only constants.
\end{proposition}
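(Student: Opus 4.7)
The plan is to follow the proof of Proposition \ref{Ba} essentially verbatim, replacing the big-oh conditions with the corresponding little-oh limits at the boundary, and then to combine with the description of $\mathcal{B}^{1+\gamma}_0$ via the invariant gradient from \cite[Chapter 7]{ZhuBn} and Corollary \ref{C5} to dispose of the degenerate ranges of $\gamma$. The crux is the VMO analogue of Proposition \ref{Ba}: for $f\in H(\bn)$, $f\in VMO^p_\gamma$ if and only if $(1-|z|^2)^\gamma |\widetilde{\nabla} f(z)|\to 0$ as $|z|\to 1^-$.

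For the forward direction I would verify condition (d) of Theorem \ref{VMO-S} with $\lambda_z=f(z)$. Fix admissible $\sigma$ and $c$. Given $\eps>0$, choose $s\in (0,1)$ with $(1-|w|^2)^{\gamma p}|\widetilde{\nabla} f(w)|^p<\eps$ whenever $|w|>s$. Then I use the integral inequality from \cite[Lemma 7]{PZ-1} exactly as in the proof of Proposition \ref{Ba} to reduce matters to controlling
$$\int_\bn |\widetilde{\nabla} f(w)|^p \frac{(1-|z|^2)^{c+\gamma p}}{|1-\langle z,w\rangle|^{n+1+c+\sigma}}\, dv_\sigma(w),$$
which I split at $|w|=s$. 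The outer piece is bounded by a constant multiple of $\eps$ uniformly in $z$ by Lemma \ref{Ict}; the compact piece $\{|w|\le s\}$ is dominated by a constant multiple of $(1-|z|^2)^{c+\gamma p}$ (using that $|1-\langle z,w\rangle|\ge 1-s$ there), which vanishes as $|z|\to 1^-$ under the exponent conditions in Theorem \ref{VMO-S}.

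For the converse I would repeat the second half of the proof of Proposition \ref{Ba}: the pointwise inequality $|\widetilde{\nabla}f(z)|\le MO_{p,r}(f)(z)+|f(z)-\widehat{f_r}(z)|$ (which follows from the estimate on page 182 of \cite{ZhuBn}) together with the bound $|f(z)-\widehat{f_r}(z)|\le C\,MO_{p,r}(f)(z)$ from Lemma 2.24 of \cite{ZhuBn} gives $(1-|z|^2)^\gamma |\widetilde{\nabla} f(z)|\lesssim (1-|z|^2)^\gamma MO_{p,r}(f)(z)$, which tends to $0$ as $|z|\to 1^-$ when $f\in VMO^p_\gamma$. With this equivalence in hand, the description of $\mathcal{B}^{1+\gamma}_0$ via $\widetilde{\nabla} f$ from \cite[Chapter 7]{ZhuBn} immediately yields $H(\bn)\cap VMO^p_\gamma=\mathcal{B}^{1+\gamma}_0$ in the ranges $n=1,\gamma>-1$ and $n>1,\gamma>-1/2$.

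For the remaining ranges, the trivial inequality $(1-|z|^2)^\gamma\ge (1-|z|^2)^\delta$ for $\gamma\le \delta$ does the job. If $n>1$ and $\gamma\le -1/2$, then $(1-|z|^2)^{-1/2}|\widetilde{\nabla} f(z)|\le (1-|z|^2)^\gamma |\widetilde{\nabla} f(z)|\to 0$, so Corollary \ref{C5} forces $f$ to be constant. If $n=1$ and $\gamma\le -1$, then $|f'(z)|=(1-|z|^2)^{-1}|\widetilde{\nabla} f(z)|\le (1-|z|^2)^\gamma |\widetilde{\nabla} f(z)|\to 0$ as $|z|\to 1^-$, so the classical maximum principle applied to the holomorphic function $f'$ on the unit disk gives $f'\equiv 0$, and $f$ is constant. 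The main obstacle is the $\eps$-splitting in the forward direction: one must verify that the conditions on $\sigma$ and $c$ from Theorem \ref{VMO-S} indeed ensure $c+\gamma p>0$ in both sign regimes of $\gamma$, so that the compact-piece contribution vanishes as $|z|\to 1^-$ while the outer piece is uniformly controlled by $\eps$.
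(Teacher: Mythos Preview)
Your proposal is correct and is precisely the argument the paper has in mind: the proof in the paper reads ``With minor modifications in the proof of Proposition~\ref{Ba} together with Corollary~\ref{C5}\dots\ The details are left to the interested reader,'' and you have supplied exactly those details---the little-oh version of both halves of Proposition~\ref{Ba}, the $\eps$-splitting verifying condition (d) of Theorem~\ref{VMO-S} (your check that $c+\gamma p>0$ in both sign regimes is the right one), and the appeal to Corollary~\ref{C5} and the classical maximum principle for the degenerate ranges. One small point worth making explicit: before invoking Theorem~\ref{VMO-S} in the forward direction you should note that the little-oh hypothesis on $(1-|z|^2)^\gamma|\widetilde{\nabla}f(z)|$ automatically gives the big-oh bound (continuity on $\bn$ plus vanishing at the boundary), so $f\in BMO^p_\gamma$ by Proposition~\ref{Ba} and the equivalences in Theorem~\ref{VMO-S} apply.
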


\begin{proof}
The details are left to the interested reader.
\end{proof}

\section{Compact Hankel operators}

In this section we prove the following characterization of compact Hankel operators between weighted Bergman spaces.

\begin{theorem}\label{compact}
Let $1<p\le q<\infty$, $\alpha,\beta>-1$, $f\in L^q_{\beta}$, and
$$\gamma=\frac{n+1+\beta}{q}-\frac{n+1+\alpha}{p}.$$
Then both $H^{\beta}_f$, $H^{\beta}_{\overline{f}}:\,A^p_{\alpha}\rightarrow L^q_{\beta}$ are compact if and only if
$f\in VMO^q_{\gamma}$.
\end{theorem}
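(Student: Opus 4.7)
The plan is to mirror the bounded case (Theorem \ref{mt1}), substituting little-oh statements for their boundedness counterparts throughout.

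For necessity, I would fix $t\ge 0$ as required by Lemma \ref{R-MO} and observe that the normalized test functions $h^t_z$ form a bounded family in $A^p_\alpha$ that tends to $0$ uniformly on compact subsets of $\bn$; a standard Bergman-space argument then upgrades this to weak convergence $h^t_z\to 0$ in $A^p_\alpha$. Since $1<p<\infty$, the hypothesized compactness of $H^\beta_f$ and $H^\beta_{\bar f}$ forces
$$\|H^\beta_f h^t_z\|_{q,\beta}+\|H^\beta_{\bar f}h^t_z\|_{q,\beta}\to 0,$$
so Proposition \ref{necessity} yields $MO_{\beta,q,t}f(z)\to 0$ as $|z|\to 1^-$. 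Running the estimates in the proof of Lemma \ref{R-MO} with ``$\to 0$'' replacing ``bounded'' produces condition (e) of Theorem \ref{VMO-S} for $f$ with $\lambda_z=\overline{g_z(z)}$, whence $f\in VMO^q_\gamma$.

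For sufficiency I would use the decomposition $f=f_1+f_2$ furnished by Theorem \ref{VMO-S}, with $f_1\in VO_\gamma$ and $f_2\in VA^q_\gamma$. Since $\bar f\in VMO^q_\gamma$ as well, it suffices to prove that $H^\beta_{f_1}$ and $H^\beta_{f_2}$ are compact from $A^p_\alpha$ into $L^q_\beta$. Using reflexivity of $A^p_\alpha$, compactness amounts to sending weakly null bounded sequences to norm-null ones, and such a sequence $\{g_k\}$ tends to $0$ uniformly on compact subsets of $\bn$. For the $f_2$-part this is immediate: boundedness of $P_\beta$ on $L^q_\beta$ yields $\|H^\beta_{f_2}g_k\|_{q,\beta}\lesssim \|g_k\|_{L^q(d\mu_{f_2,\beta})}$, and condition (i) of Lemma \ref{VA} finishes the job.

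The main obstacle will be the compactness of $H^\beta_{f_1}$ when $f_1\in VO_\gamma$, where the Schur-type estimates of Proposition \ref{L-BO} must be promoted to a little-oh statement. My plan is to split the integral kernel representation of $H^\beta_{f_1}g_k$ at radius $\rho\in(0,1)$. On $\{|w|\le\rho\}$ the uniform convergence $g_k\to 0$ combined with the fact that the operator $g\mapsto\int K(z,w)|g(w)|\,dv_\beta(w)$ associated to the majorant kernel from Lemma \ref{L-CBO} is bounded from $A^p_\alpha$ to $L^q_\beta$ (by the same application of Theorem \ref{Z-T} and Proposition \ref{proj-ge1-beta} used in Proposition \ref{L-BO}) shows that the $L^q_\beta$-norm of this piece is dominated by $(\sup_{|w|\le\rho}|g_k(w)|)$ times a constant, which vanishes as $k\to\infty$ for each fixed $\rho$. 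On $\{|w|>\rho\}$ I would exploit the defining property of $VO_\gamma$ to replace $\|f_1\|_{BO_\gamma}$ in Lemma \ref{L-CBO} by an $\varepsilon$-quantity once $\rho$ is sufficiently close to $1$, thereby bounding the corresponding operator norm by $\varepsilon\,\|g_k\|_{p,\alpha}$. Letting first $k\to\infty$ and then $\rho\to 1^-$ yields $\|H^\beta_{f_1}g_k\|_{q,\beta}\to 0$. The case $\gamma<0$ will be reduced to a sufficiently large weight $s\ge\beta$ via Lemma \ref{LHb1} before the splitting is carried out. The most delicate point is verifying that the chain argument underlying Lemma \ref{L-CBO} does localize in $w$ to produce the required $\varepsilon$-gain on the outer region; this is where the bulk of the technical work will lie.
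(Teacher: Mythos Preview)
Your treatment of the necessity and of the $VA^q_\gamma$ piece matches the paper's argument essentially verbatim.

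The genuine gap is in the $VO_\gamma$ piece. Your plan is to localize the chain estimate of Lemma~\ref{L-CBO} to the region $\{|w|>\rho\}$ and thereby trade $\|f_1\|_{BO_\gamma}$ for an $\varepsilon$. This does not work as stated: in the integral representation of $H^\beta_{f_1}g_k(z)$ the variable $z$ is unrestricted, and the geodesic from $z$ to $w$ will in general contain points $z_i$ with $|z_i|$ bounded away from~$1$ (indeed, even when both $|z|$ and $|w|$ exceed $\rho$, the geodesic can dip deep into the ball). At those links $(1-|z_i|^2)^\gamma\omega_r(f_1)(z_i)$ is merely bounded, not small, so the sum $\sum_i\omega_r(f_1)(z_i)$ does not acquire an $\varepsilon$-factor simply because $|w|>\rho$. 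The ``localization'' you flag as delicate is therefore not just delicate---it fails.

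The paper circumvents this by inserting $\widehat{f_r}(w)$ rather than trying to sharpen Lemma~\ref{L-CBO}: one writes
\[
|f(z)-f(w)|\le |f(w)-\widehat{f_r}(w)|+|f(z)-\widehat{f_r}(w)|.
\]
The first term is bounded by $\omega_r(f)(w)<\varepsilon(1-|w|^2)^{-\gamma}$ on $\{|w|>\rho\}$, and the resulting integral is controlled exactly as in Proposition~\ref{L-BO} via Theorem~\ref{Z-T}. For the second term one first passes (via Lemma~\ref{LHb1}) to a sufficiently large weight $\delta\ge\beta$, then applies H\"older in~$z$ and invokes the integral characterization~(c) of $VMO^q_\gamma$ in Theorem~\ref{VMO-S}: this produces a factor
\[
(1-|w|^2)^{c+\gamma q}\int_{\Bn}\frac{|f(z)-\widehat{f_r}(w)|^q}{|1-\langle z,w\rangle|^{n+1+\delta}}\,dv_{\beta-\eta q}(z)<\varepsilon
\]
for $|w|>\rho$, and the remaining integral in $w$ is handled by the embedding $A^p_\alpha\subset A^q_{\beta-\gamma q}$. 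Thus the smallness in~$w$ for the second term comes from the $VMO$ condition on the \emph{integrated} quantity, not from any pointwise chain estimate; this is the idea your proposal is missing.
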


Again, we are going to prove Theorem~\ref{compact} with several lemmas. We begin with the necessity.

\begin{lemma}\label{comp-n}
Let $p,q,\alpha,\beta$ and $\gamma$ be as in Theorem~\ref{compact}. If both $H^{\beta}_f$, $H^{\beta}_{\overline{f}}:\,A^p_{\alpha}
\rightarrow L^q_{\beta}$ are compact, then $f\in VMO^q_{\gamma}$.
\end{lemma}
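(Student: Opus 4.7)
The plan is to mimic the necessity direction of Theorem \ref{mt1}, but with the quantitative estimates of Proposition \ref{necessity} and Lemma \ref{R-MO} upgraded to their ``little-oh'' analogues.

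First I would fix $t\ge 0$ with $n+1+\beta+t>(n+1+\alpha)/p$ and recall that the family $\{h^t_z\}_{z\in\bn}$ has $A^p_\alpha$-norm $1$. The key preliminary step is to show that $h^t_z\to 0$ weakly in $A^p_\alpha$ as $|z|\to 1^-$. Boundedness is automatic by the normalization; pointwise convergence follows from
$$|h^t_z(w)|\asymp \frac{(1-|z|^2)^{(n+1+\beta+t)-(n+1+\alpha)/p}}{|1-\langle w,z\rangle|^{n+1+\beta+t}}\longrightarrow 0$$
for each fixed $w\in\bn$, since the exponent on $1-|z|^2$ is strictly positive. Combined with a standard density argument using point evaluations and kernel functions (whose span is dense in $A^{p'}_\alpha$ in the weak-$*$ sense), this yields $h^t_z\rightharpoonup 0$ weakly in $A^p_\alpha$.

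Once weak null convergence is in place, the compactness of $H^\beta_f$ and $H^\beta_{\bar f}$ from $A^p_\alpha$ to $L^q_\beta$ forces
$$\lim_{|z|\to 1^-}\Big(\big\|H^\beta_f h^t_z\big\|_{q,\beta}+\big\|H^\beta_{\bar f} h^t_z\big\|_{q,\beta}\Big)=0,$$
because compact operators send weakly null sequences (and, by a routine diagonal argument, also weakly null nets parametrised by $z$) to norm-null ones. Feeding this into Proposition \ref{necessity} gives $MO_{\beta,q,t}f(z)\to 0$ as $|z|\to 1^-$.

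Finally, I would extract the VMO conclusion from the $MO_{\beta,q,t}f(z)\to 0$ estimate in the same way as in the proof of Lemma \ref{R-MO}: the inequality there shows that
$$\frac{(1-|z|^2)^{\gamma q}}{|D(z,r)|}\int_{D(z,r)}|f(w)-\overline{g_z(z)}|^q\,dv(w)\lesssim \big[MO_{\beta,q,t}f(z)\big]^q,$$
so the left side tends to $0$ as $|z|\to 1^-$ with $\lambda_z=\overline{g_z(z)}$. Invoking the equivalence (a)$\Leftrightarrow$(e) of Theorem \ref{VMO-S} then gives $f\in VMO^q_\gamma$, completing the proof. The main (minor) obstacle is verifying the weak convergence $h^t_z\rightharpoonup 0$ cleanly in the Banach space setting $1<p<\infty$; everything after that is an application of already-established machinery.
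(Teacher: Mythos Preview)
Your proposal is correct and follows essentially the same route as the paper: fix $t$ with $n+1+\beta+t>(n+1+\alpha)/p$, use that $h^t_z\to 0$ weakly in $A^p_\alpha$ (the paper phrases this via uniform convergence on compacta plus boundedness, which is equivalent to your density argument) so that compactness forces $\|H^\beta_f h^t_z\|_{q,\beta}+\|H^\beta_{\bar f} h^t_z\|_{q,\beta}\to 0$, then apply Proposition~\ref{necessity} to get $MO_{\beta,q,t}f(z)\to 0$, and finally read off condition~(e) of Theorem~\ref{VMO-S} with $\lambda_z=\overline{g_z(z)}$. Your net-versus-sequence worry is handled exactly as you suggest, by extracting sequences and arguing by contradiction.
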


\begin{proof}
Fix a nonnegative $t$ such that $n+1+\beta+t>(n+1+\a)/p$. It is easy to see that $h_ z^t$ converges to zero uniformly on compact subsets of
$\Bn$ as $|z|\rightarrow 1^{-}$. Since each $h_z^t$ is a unit vector in $A^p_\alpha$, we conclude that $h_z^t\to0$ weakly in
$A^p_\alpha$ as $|z|\to1^-$. It follows from the compactness of $H^\beta_f$ that
$$\lim_{|z|\rightarrow 1^{-}}\|H_ f^{\beta}h^t_ z\|_{q,\beta}=0.$$
The same is true if $f$ is replaced by $\overline{f}$. By Proposition~\ref{necessity} we have
$$\lim_{|z|\rightarrow 1^{-}} MO_{q,\beta,t}(f)(z)=0.$$
In other words, we have
$$\lim_{|z|\rightarrow 1^{-}} (1-|z|^2)^{c+\gamma q}\int_{\Bn} \big|f(w)-\overline{g_ z(z)}\,\big |^q \,\frac{dv_{\beta}(w)}
{|1-\langle z,w \rangle |^{n+1+c+\beta}}=0,$$
where $c=(q-1)(n+1+\beta)+tq$. This implies condition (e) in Theorem 5.3 with $\lambda_ z=\overline{g_ z(z)}$,
so $f\in VMO^q_{\gamma}$.
\end{proof}

The sufficiency will follow from the next two results.

\begin{lemma}\label{CVA}
Let $p,q,\alpha,\beta$ and $\gamma$ be as in Theorem~\ref{compact}.
If $f\in VA^q_{\gamma}$, then $H^{\beta}_f:\,A^p_{\alpha}\rightarrow L^q_{\beta}$ is compact.
\end{lemma}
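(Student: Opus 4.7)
The plan is to mimic the proof of Proposition \ref{L-BA} (the boundedness result), but replace the big-oh embedding result of Lemma \ref{CMf} with its little-oh counterpart Lemma \ref{VA}. Since $q>1$, the space $A^p_\alpha$ is reflexive, so compactness of $H^\beta_f$ is equivalent to the statement that whenever $\{g_k\}$ is a bounded sequence in $A^p_\alpha$ converging weakly to zero, we have $\|H^\beta_f g_k\|_{q,\beta}\to 0$. A standard normal families argument shows that weak convergence of a bounded sequence $\{g_k\}$ in $A^p_\alpha$ to $0$ is equivalent to $\{g_k\}$ being bounded in $A^p_\alpha$ together with $g_k\to 0$ uniformly on compact subsets of $\Bn$.

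Given such a sequence $\{g_k\}$, the key estimate is
\begin{equation*}
\|H^\beta_f g_k\|_{q,\beta}\le \|fg_k\|_{q,\beta}+\|P_\beta(fg_k)\|_{q,\beta}\le \bigl(1+\|P_\beta\|_{L^q_\beta}\bigr)\|fg_k\|_{q,\beta},
\end{equation*}
where we used that $P_\beta$ is bounded on $L^q_\beta$ (which holds since $q>1$). Recognizing $\|fg_k\|_{q,\beta}^q=\int_\Bn |g_k(z)|^q\,d\mu_{f,\beta}(z)$, the hypothesis $f\in VA^q_\gamma$ combined with condition (i) of Lemma \ref{VA} gives immediately that $\|fg_k\|_{q,\beta}\to 0$.

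Hence $\|H^\beta_f g_k\|_{q,\beta}\to 0$, which establishes compactness. There is no real obstacle here: the entire content of the argument is already packaged in the little-oh embedding Lemma \ref{VA}, which is precisely the compactness version of the embedding $A^p_\alpha\hookrightarrow L^q(d\mu_{f,\beta})$; the only work is to pass this compact embedding through the bounded operator $I-P_\beta$ on $L^q_\beta$, which is trivial. The subtler part of the compactness proof of Theorem \ref{compact} will be the $BO$ piece (handled by the next lemma), where one must carry out an $\epsilon$-splitting argument using the Schur-type estimates of Theorem \ref{Z-T} and Proposition \ref{proj-ge1-beta}, analogous to the boundedness proof in Proposition \ref{L-BO}.
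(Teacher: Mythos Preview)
Your proof is correct and follows exactly the same route as the paper: for a bounded sequence $\{g_k\}$ in $A^p_\alpha$ converging to zero uniformly on compacta, bound $\|H^\beta_f g_k\|_{q,\beta}$ by a constant times $\|fg_k\|_{q,\beta}=\|g_k\|_{L^q(d\mu_{f,\beta})}$ using the boundedness of $P_\beta$ on $L^q_\beta$, and then apply the vanishing embedding criterion of Lemma~\ref{VA}. (One small slip: reflexivity of $A^p_\alpha$ comes from $p>1$, not $q>1$; the hypothesis $q>1$ is what you use for the boundedness of $P_\beta$.)
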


\begin{proof}
Let $\{g_n\}$ be a bounded sequence in $A^p_{\alpha}$ converging to zero uniformly on compact subsets of $\Bn$. We must
prove that $\|H^{\beta}_f g_n\|_{q,\beta} \rightarrow 0$. Following the proof of Proposition~\ref{L-BA}, we know that
$$\big \|H^{\beta}_f g_n\big \|_{q,\beta} \lesssim \|g_n\|_{L^q(d\mu_{f,\beta})}$$
with $d\mu_{f,\beta}=|f|^q dv_{\beta}$. The desired result then follows from Lemma \ref{VA}.
\end{proof}

\begin{lemma}\label{CVO}
Let $p,q,\alpha,\beta$ and $\gamma$ be as in Theorem~\ref{compact}. If $f\in VO_{\gamma}$,
then $H^{\beta}_f:\,A^p_{\alpha}\rightarrow L^q_{\beta}$ is compact.
\end{lemma}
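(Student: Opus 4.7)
The plan is to mimic the boundedness argument of Proposition \ref{L-BO} while extracting, at each step, the extra smallness furnished by the vanishing condition. Since $VO_\gamma\subset BO_\gamma$, Proposition \ref{L-BO} already yields that $H^\beta_f:A^p_\alpha\to L^q_\beta$ is bounded; it therefore suffices to prove that $\|H^\beta_f g_n\|_{q,\beta}\to 0$ whenever $\{g_n\}\subset A^p_\alpha$ is bounded and $g_n\to 0$ uniformly on compact subsets of $\Bn$. By density of $H^\infty$ in $A^p_\alpha$, one can test this on sequences in $H^\infty$, thereby legitimizing the integral representation used in Proposition \ref{L-BO}.

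Given $\epsilon>0$, I would first pick $R\in(0,1)$ with $(1-|z|^2)^\gamma\omega_1(f)(z)<\epsilon$ for $|z|>R$. Starting from
$$H^\beta_f g_n(z)=\int_\Bn \frac{(f(z)-f(w))\,g_n(w)}{(1-\langle z,w\rangle)^{n+1+\beta}}\,dv_\beta(w),$$
the inner integral splits according to $|w|\le R$ versus $|w|>R$, producing $\|H^\beta_f g_n\|_{q,\beta}\le I_n(R)+J_n(R)$. For $I_n(R)$ the integration runs over a compact set where $g_n\to 0$ uniformly; feeding this into the Schur-type estimates of Theorem \ref{Z-T} and Proposition \ref{proj-ge1-beta} applied to the truncated kernel forces $I_n(R)\to 0$ as $n\to\infty$ for each fixed $R$. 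For $J_n(R)$, I would refine the chaining argument in Lemma \ref{L-CBO}: when both $|z|, |w|>R$ and the Bergman geodesic from $z$ to $w$ stays in $\{|\zeta|>R\}$, each step $\omega_1(f)(z_i)$ in the chain is controlled by $\epsilon(1-|z_i|^2)^{-\gamma}$, giving
$$|f(z)-f(w)|\le C\,\epsilon\,\frac{\beta(z,w)+1}{\min(1-|z|,1-|w|)^\gamma}.$$
Inserting this sharpened bound into the same Schur-type arguments used to bound $I_1(g)$, $I_2(g)$ in Proposition \ref{L-BO} then gives $J_n(R)\lesssim\epsilon$, uniformly in $n$. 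Combining, $\limsup_n\|H^\beta_f g_n\|_{q,\beta}\lesssim\epsilon$, and letting $\epsilon\to 0$ concludes the argument for $\gamma\ge 0$. The case $\gamma<0$ would be treated by the same scheme after invoking Lemma \ref{LHb1} to reduce to a larger weight $s$ and applying the $\gamma<0$ inequality of Lemma \ref{L-CBO}.

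The main obstacle is handling configurations where the Bergman geodesic from $z$ to $w$ enters $\{|\zeta|\le R\}$ even though both endpoints are near the boundary (for example, nearly antipodal pairs in $\Bn$). In such configurations the refined chain bound fails and one must fall back to the $BO_\gamma$ estimate, which by itself carries the constant $\|f\|_{BO_\gamma}$ rather than $\epsilon$. However, this case forces $|1-\langle z,w\rangle|$ to be bounded below by a positive constant depending only on $R$, so the Bergman-kernel factor in the integrand is harmless; a further splitting according to $|z|\le R$ versus $|z|>R$ then reduces the residual contribution to either the uniform-convergence argument that drove $I_n(R)\to 0$ or a bounded-kernel integral operator whose norm can be made arbitrarily small by choosing $R$ sufficiently close to $1$.
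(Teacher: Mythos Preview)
Your strategy is reasonable up to the obstacle you flag, but the proposed resolution of that obstacle does not hold together, and this is precisely where the paper's argument diverges from yours.

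The assertion that $|1-\langle z,w\rangle|$ is bounded below by a constant $c_R>0$ whenever $|z|,|w|>R$ and the geodesic meets $\{|\zeta|\le R\}$ is in fact correct (a clean way to see it: if $\zeta_0$ lies on the geodesic with $|\zeta_0|\le R$, the addition formula for $\tanh$ together with $1-|\varphi_a(b)|^2=(1-|a|^2)(1-|b|^2)/|1-\langle a,b\rangle|^2$ gives $|1-\langle z,w\rangle|\ge (1-R)^2$). But the next claim---that the residual piece becomes ``a bounded-kernel integral operator whose norm can be made arbitrarily small by choosing $R$ sufficiently close to $1$''---has no visible mechanism behind it. As $R\to1$ the bound $c_R\asymp(1-R)^2$ tends to~$0$, so the kernel control $c_R^{-(n+1+\beta)}$ blows up, while the residual region does not shrink (it grows, since more geodesics meet the larger set). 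You are left with a term carrying the full constant $\|f\|_{BO_\gamma}$ and no $\varepsilon$. Separately, your ``further splitting according to $|z|\le R$'' does not rescue the piece with $|z|\le R$ and $|w|>R$: the uniform convergence of $g_n$ on compacta controls $g_n(w)$ only for $|w|\le R$, not here.

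The paper avoids the geodesic issue altogether by a different decomposition. After passing (for \emph{all} $\gamma$, not only $\gamma<0$) to a larger weight $\delta\ge\beta$ via Lemma~\ref{LHb1}, it inserts the averaging function and writes
\[
|f(z)-f(w)|\le |f(w)-\widehat f_r(w)|+|f(z)-\widehat f_r(w)|.
\]
The first term is $\le\omega_r(f)(w)<\varepsilon(1-|w|^2)^{-\gamma}$ for $|w|$ near the boundary, and the resulting integral is handled by Theorem~\ref{Z-T} with an explicit factor $\varepsilon$ in front. For the second term the paper applies H\"older in the outer $z$-integral so as to isolate the factor
\[
(1-|w|^2)^{c+\gamma q}\int_{\Bn}\frac{|f(z)-\widehat f_r(w)|^q}{|1-\langle z,w\rangle|^{n+1+c+\sigma}}\,dv_\sigma(z),
\]
which is exactly the quantity in Theorem~\ref{VMO-S}(c) and is therefore $<\varepsilon$ for $|w|$ close to~$1$; the remaining $w$-integral of $|g_n(w)|^q$ is controlled by the embedding $A^p_\alpha\subset A^q_{\beta-\gamma q}$. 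Thus the smallness comes from the global integral VMO characterization rather than from any pointwise chaining bound, and no information about where the geodesic travels is needed.
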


\begin{proof}
Let $\{g_n\}$ be a bounded sequence in $A^p_{\alpha}$ converging to zero uniformly on compact subsets of $\Bn$.
By Lemma \ref{LHb1} and the density of $H^{\infty}$ in $A^p_{\alpha}$, for any $\delta\ge\beta$
we have
$$\big \|H^{\beta}_fg \big \|_{q,\beta}\le C \,\big \|H^{\delta}_fg \big \|_{q,\beta},\qquad g\in A^p_{\alpha}.$$
We will be done if we can prove that
$$\lim_{n\to\infty} \big \|H^{\delta}_f g_n \big \|_{q,\beta}=0$$
for some $\delta\ge \beta$.

Since $\beta>-1$, we can find some $\eta>0$ satisfying $\beta-\eta \max(q,q')>-1$. We then choose some $\delta\ge \beta$ large
enough so that $c=\eta q +\delta-\beta$ satisfies
$$c\ge n+1+\sigma+\max(0,-2\gamma q),$$
with $\sigma =\beta-\eta q$. In fact, this is the same as
$$\delta \ge n+1+2\beta+\max(0,-2\gamma q) -2\eta q.$$
So the choice $\delta=n+1+2\beta+\max(0,-2\gamma q)$ works. Let $\varepsilon>0$ be arbitrary. Since $VO_{\gamma}
\subset VMO^q_{\gamma}$, by part (c) of Theorem~\ref{VMO-S} and with the above $c$ and $\sigma$,
we may choose $t_ 1$ sufficiently close to 1 so that
\begin{equation}\label{eps}
(1-|w|^2)^{(\eta q+\delta-\beta)+\gamma q}\int_{\Bn} \frac{|f(z)-\widehat{f}_ r(w)|^q}
{|1-\langle z,w \rangle |^{n+1+\delta}} \,dv_{\beta-\eta q}(z)<\eps
\end{equation}
for all $t_1<|w|<1$.

Fix $r>0$. By the definition of $ VO_{\gamma}$, there exists $t_ 2$, $0<t_ 2<1$, such that
$$\omega_{r}(f)(w)<\eps (1-|w|^2)^{-\gamma},\qquad |w|>t_ 2.$$
We have
$$\|H^{\delta}_f g_n\|^q_{q,\beta}\le \int_{\Bn} \left (\int_{\Bn}\frac{|f(z)-f(w)|\,|g_ n(w)|}
{|1-\langle z,w \rangle |^{n+1+\delta}}\,dv_{\delta}(w)\right )^q\, dv_{\beta}(z).$$
Let $t=\max(t_ 1,t_ 2)$ and split the inner integral above in two parts: one for $|w|\le t$ and the other for $|w|>t$.
The integral on $|w|\le t$ can be made as small as we want because of the uniform convergence to zero on compact subsets
of $g_ n$. For the other, we will use our assumption $f\in VO_{\gamma}$.

Since
$$|f(z)-f(w)|\le |f(z)-\widehat{f}_r(w)|+|f(w)-\widehat{f}_r(w)|,$$
we get two integrals. The first one involves the function
$$I_1(z)=\int_{|w|>t} \frac{|f(w)-\widehat{f}_r(w)|\,|g_n(w)|}{|1-\langle z,w \rangle |^{n+1+\delta}} \,dv_{\delta}(w).$$
Since
$$f(w)-\widehat{f}_r(w)=\frac{1}{v_{\delta}(D(w,r))}\int_{D(w,r)}\big (f(w)-f(\zeta)\big )\,dv_{\delta}(\zeta),$$
we obtain for $|w|>t$ that
$$|f(w)-\widehat{f}_r(w)|\le \omega_{r}(f)(w)< \varepsilon \,(1-|w|^2)^{-\gamma}.$$
Therefore,
\begin{eqnarray*}
I_ 1&:=&\int_{\Bn} I_ 1(z)^q \,dv_{\beta}(z)\lesssim \varepsilon^q\, \int_{\Bn} \left(\int_{\Bn}
\frac{|g_n(w)| \,dv_{\delta-\gamma}(w)}{|1-\langle z,w \rangle |^{n+1+\delta}} \right )^q  \,dv_{\beta}(z)\\
&=& \varepsilon^q \int_{\Bn} S_{b,d} (|g_ n|)(z)^q \,dv_{\beta}(z),
\end{eqnarray*}
with $d=n+1+\delta$ and $b=\delta-\gamma$. Now we want to apply Theorem~\ref{Z-T} to show that
$S_{b,d}:L^p_{\alpha}\rightarrow L^q_{\beta}$ is bounded. In the notation of Theorem~\ref{Z-T} we have $\lambda=\gamma$
and
$$n+1+b+\lambda=n+1+\delta=d.$$
It remains to check the condition
$$\alpha+1<p(b+1)=p(1+\delta-\gamma),$$
which is easily seen to be equivalent to
\begin{equation}\label{delta}
n\left(\frac1q-\frac1p\right)<(1+\delta)-\frac{1+\beta}{q},
\end{equation}
By the proof of Proposition~\ref{L-BO}, we have
$$n\left(\frac1q-\frac1p\right)<\frac{1+\beta}{q'}=(1+\beta)-\frac{1+\beta}{q},$$
where $q'$ is the conjugate exponent of $q$. Since $\beta\le \delta$, we see that (\ref{delta}) is indeed true.
Therefore, by Theorem~\ref{Z-T}, we have
$$I_ 1 \lesssim \varepsilon ^q\,\|g_ n\|^q_{p,\alpha} \le C \,\varepsilon^q.$$

It remains to deal with
$$I_2:=\int_{\Bn} I_ 2(z)^q \,dv_{\beta}(z),$$
where
$$I_ 2(z)=\int_{|w|>t} \frac{|f(z)-\widehat{f}_ r(w)|\,\,|g_ n(w)|}{|1-\langle z,w \rangle |^{n+1+\delta}} \,dv_{\delta}(w).$$
By H\"{o}lder's inequality and Lemma~\ref{Ict},
\begin{eqnarray*}
I_ 2(z)^q&\lesssim& \left[\int_{|w|>t} \frac{|f(z)-\widehat{f}_ r(w)|^q\,\,|g_ n(w)|^q}
{|1-\langle z,w \rangle |^{n+1+\delta}} \,dv_{\delta+\eta q}(w)\right]\,\cdot\\
&&\ \cdot\,\left[\int_{\Bn} \frac{ dv_{\delta-\eta q'}(w)}{|1-\langle z,w \rangle |^{n+1+\delta}}  \right ]^{q/q'}\\
& \lesssim& (1-|z|^2)^{-\eta q} \left (\int_{|w|>t}
\frac{|f(z)-\widehat{f}_ r(w)|^q\,\,|g_ n(w)|^q}{|1-\langle z,w \rangle |^{n+1+\delta}} \,dv_{\delta+\eta q}(w)\right).
\end{eqnarray*}
Thus $I_ 2$ is dominated by
$$\int_{|w|>t} |g_ n(w)|^q \left[(1-|w|^2)^{\eta q}\int_{\Bn} \frac{|f(z)-\widehat{f}_ r(w)|^q}
{|1-\langle z,w \rangle |^{n+1+\delta}} \,dv_{\beta-\eta q}(z) \right]\,dv_{\delta}(w).$$
By \eqref{eps}, we get
$$I_2\lesssim \varepsilon \int_{|w|>t} |g_n(w)|^q\, dv_{\beta-\gamma q}(w)\lesssim \varepsilon \,\|g_n\|^q_{A^p_{\alpha}}.$$
For the last inequality we used the fact that $A^p_{\alpha}\subseteq A^{q}_{\beta-\gamma q}$, which can be obtained from
Theorem 69 in \cite{ZZ}. Putting everything together we conclude that $\|H^{\delta}_f g_n \big \|_{q,\beta}\rightarrow 0$ as
$n\to\infty$. This finishes the proof.
\end{proof}

To summarize, the necessity of Theorem \ref{compact} is proved by Lemma~\ref{comp-n}. Since $f\in VMO^q_{\gamma}$ if
and only if $\overline{f}\in VMO^q_{\gamma}$, the sufficiency is a consequence of Theorem \ref{VMO-S}, Lemma \ref{CVA},
and Lemma \ref{CVO}.

As a direct consequence of Theorem \ref{compact} and Proposition \ref{Ba-0}, we obtain the following characterization of compactness
of Hankel operators with conjugate holomorphic symbols.

\begin{coro}
Let $f\in A^1_{\beta}$, $1<p\le q<\infty$, $\alpha,\beta>-1$, and
$$\gamma=\frac{n+1+\beta}{q}-\frac{n+1+\alpha}{p}.$$
 If $n=1$ and $\gamma>-1$, or if $n>1$ and $\gamma>-1/2$, then $H^\beta_{\bar{f}}:A^p_{\alpha}\rightarrow L^q_{\beta}$
 is compact if and only if $f\in \mathcal{B}^{1+\gamma}_{0}$. In all other cases, $H^\beta_{\bar f}:A^p_\alpha\to L^q_\beta$
 is compact if and only if $f$ is constant.
\end{coro}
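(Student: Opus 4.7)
The plan is to mirror the argument for the analogous boundedness corollary at the end of Section 4, replacing Theorem \ref{mt1} and Proposition \ref{Ba} by their little-oh counterparts, Theorem \ref{compact} and Proposition \ref{Ba-0}. The crucial structural observation I would exploit at the outset is that $H^\beta_f \equiv 0$ whenever $f$ is holomorphic, since $fg$ remains holomorphic and hence $P_\beta(fg)=fg$. Thus, for holomorphic $f$, compactness of $H^\beta_{\bar f}$ alone is automatically equivalent to simultaneous compactness of $H^\beta_{\bar f}$ and $H^\beta_{\overline{\bar f}}=H^\beta_f$, which is exactly the hypothesis of Theorem \ref{compact} applied with symbol $\bar f$.

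Before invoking Theorem \ref{compact} I need to upgrade the hypothesis $f\in A^1_\beta$ to $\bar f\in L^q_\beta$. Assuming $H^\beta_{\bar f}\colon A^p_\alpha\to L^q_\beta$ is compact, hence bounded, I would test it on the constant $1\in A^p_\alpha$ to obtain $H^\beta_{\bar f}(1)=\bar f-P_\beta(\bar f)$. A short computation with the reproducing formula (or the standard fact that the Bergman projection of an antiholomorphic $L^1_\beta$ function equals its value at the origin) gives $P_\beta(\bar f)=\overline{f(0)}$, so $\bar f\in L^q_\beta$ and in fact $f\in A^q_\beta$. With this integrability in hand, Theorem \ref{compact} says $H^\beta_{\bar f}$ is compact if and only if $\bar f\in VMO^q_\gamma$, which, by the manifest invariance of the $q$-mean oscillation under complex conjugation, is the same as $f\in VMO^q_\gamma$.

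The last step is to apply Proposition \ref{Ba-0} to intersect $VMO^q_\gamma$ with $H(\bn)$. In the noncritical ranges ($n=1,\gamma>-1$ or $n>1,\gamma>-1/2$) this gives exactly $\mathcal{B}^{1+\gamma}_0$, and in all other cases it collapses to the constants -- the critical threshold $\gamma=-1/2$ (for $n>1$) being absorbed into the latter case via Corollary \ref{C5}, which rules out nonconstant holomorphic $f$ with $(1-|z|^2)^{-1/2}|\widetilde\nabla f(z)|\to 0$. For the converse direction in the noncritical range, the embedding $\mathcal{B}^{1+\gamma}\subset A^q_\beta$ (already used in the bounded-case corollary) supplies the needed integrability so that Theorem \ref{compact} can be fed back in; in the "constants only" case, $H^\beta_{\bar f}$ is trivially zero. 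I do not anticipate any serious obstacle here: all the heavy lifting has already been carried out in Theorem \ref{compact} and Proposition \ref{Ba-0}, so the corollary is essentially a direct readout, and the only care required is in tracking the casework and performing the $A^q_\beta$ bootstrap via evaluation on the constant function $1$.
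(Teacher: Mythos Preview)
Your proposal is correct and follows essentially the same approach as the paper: the paper states this corollary as a direct consequence of Theorem~\ref{compact} and Proposition~\ref{Ba-0}, and your argument is precisely the little-oh analogue of the paper's proof of the boundedness corollary at the end of Section~4 (including the bootstrap $f\in A^q_\beta$ via testing on the constant function~$1$). Your explicit appeal to Corollary~\ref{C5} for the critical case $n>1$, $\gamma=-1/2$ is already built into Proposition~\ref{Ba-0}, so it is redundant but harmless.
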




\begin{thebibliography}{99}

\bibitem{AFP} J. Arazy,  S. Fisher,  and J. Peetre, Hankel operators on weighted Bergman
spaces, {\it Amer. J. Math.} {\bf 110} (1988), 989--1054.

\bibitem{AJFP}  J. Arazy, S. Fisher, S. Janson, and  J. Peetre, Membership of Hankel operators on the ball in unitary ideals,
{\it  J. London Math. Soc.} {\bf 43} (1991), 485--508.

\bibitem{Ax} S. Axler, The Bergman space, the Bloch space, and commutators of multiplication operators,
{\it Duke Math. J.} {\bf 53} (1986), 315--332.

\bibitem{BBCZ} C. B\'ekoll\'e, C. Berger, L. Coburn, and K. Zhu, BMO in the Bergman metric on bounded symmetric domains,
{\it J. Funct. Anal.} {\bf 93} (1990), 310--350.

\bibitem{BCZ0} C. Berger, L. Coburn, and K. Zhu, BMO on the Bergman spaces of the classical domains,
{\it Bull. Amer. Math. Soc.} (N.S.) {\bf 17} (1987), 133--136.

\bibitem{BCZ} C. Berger, L. Coburn, and K. Zhu, Function Theory on Cartan Domains and the Berezin-Toeplitz Symbol Calculus,
{\it Amer. J. Math.} {\bf 110} (1988), 921--953.

\bibitem{J} S. Janson, Hankel operators between weighted Bergman spaces, {\it Ark. Mat.} {\bf 26} (1988), 205--219.

\bibitem{Li1} H. Li, BMO, VMO, and Hankel operators on the Bergman space of strongly pseudo convex domains,
{\it J. Funct. Anal.} {\bf 106} (1992), 375-408.

\bibitem{Li2} H. Li, Hankel operators on the Bergman space of strongly pseudo convex domains, {\it Integr. Equ. Oper. Theory}
{\bf 19} (1994), 458-476.

\bibitem{LW} S. Li and H. Wulan, Some new characterizations of Bloch spaces, {\it Taiwanese J. Math.} {\bf 14} (2010), 2245--2259.

\bibitem{PZ-1} J. Pau and R. Zhao, Weak factorization and Hankel forms for weighted Bergman spaces
on the unit ball, {\it Math. Ann.} (to appear), available at http://arxiv.org/abs/1407.4632

\bibitem{Wall} R. Wallst\'{e}n, Hankel operators between weighted Bergman spaces in the ball, {\it Ark. Mat.} {\bf 28} (1990), 183--192.

\bibitem{Zhao-Schur} R. Zhao, Generalization of Schur's test and its application to a class of integral operators,
{\it Integr. Equ. Oper. Theory}, doi: 10.1007/s00020-014-2215-0 (2015).

\bibitem{ZZ} R. Zhao and K. Zhu, Theory of Bergman spaces in the unit ball of $\Cn$, {\it Mem. Soc. Math. Fr.} (N.S.)
{\bf 115} (2008), vi+103 pp.

\bibitem{Zhu-VMO} K. Zhu, VMO, ESV, and Toeplitz operators on the Bergman space, {\it Trans. Amer. Math. Soc.}
{\bf 302} (1987), 617--646.

\bibitem{Zhu-Pac} K. Zhu, $BMO$ and Hankel operators on Bergman spaces, {\it Pacific J. Math.} {\bf 155} (1992), 377--395.

\bibitem{ZhuBn} K. Zhu, {\it Spaces of Holomorphic Functions in the Unit Ball}, Springer-Verlag, New York, 2005.

\bibitem{Zhu} K. Zhu, {\it Operator Theory in Function Spaces}, Second Edition, Math. Surveys and Monographs {\bf 138},
American Mathematical Society, Providence, Rhode Island, 2007.

\end{thebibliography}
\end{document}